\chardef\bslash=`\\ 
\newtheorem[{}\it]{thm}{Theorem}[section]
\newtheorem{cor}[thm]{Corollary}
\newtheorem{lem}[thm]{Lemma}
\newtheorem{prop}[thm]{Proposition}
\newtheorem{qus}[thm]{Question}
\newtheorem{rmk}[thm]{Remark}
\newtheorem{conj}[thm]{Conjecture}
\newtheorem{mainthm}{Theorem}
\theoremstyle{definition}
\newtheorem{defn}{Definition}[section]
\newtheorem*[{}\it]{notation}{Notation}
\newcommand{\thmref}[1]{Theorem~\ref{#1}}
\newcommand{\lemref}[1]{Lemma~\ref{#1}}
\newcommand{\corref}[1]{Corollary~\ref{#1}}
\newcommand{\propref}[1]{Proposition~\ref{#1}}
\newcommand{\defref}[1]{Definition~\ref{#1}}
\newcommand{\sref}[1]{Section~\ref{#1}}
\newcommand{\mA}{\mathcal{A}}
\newcommand{\mF}{\mathcal{F}}
\newcommand{\mU}{\mathcal{U}}
\newcommand{\N}{\mathbb{N}}
\newcommand{\Z}{\mathbb{Z}}
\newcommand{\T}{\mathbb{T}}
\newcommand{\R}{\mathbb{R}}
\newcommand{\wt}{\widetilde}
\newcommand{\eval}[2][\right]{\relax
  \ifx#1\right\relax \left.\fi#2#1\rvert}
\title[Quasi-isometric center]{Partially Hyperbolic Dynamics with Quasi-isometric Center}
\author[Feng]{Ziqiang Feng}
\address{BICMR\\
Peking University\\
Beijing, 100871, China}
\email{zqfeng@pku.edu.cn} 
\thanks{\formatdate{2024-11-16}}
\begin{document}

\begin{abstract}
  We consider the class of partially hyperbolic diffeomorphisms on a closed 3-manifold with quasi-isometric center. Under the non-wandering condition, we prove that the diffeomorphisms are accessible if there is no $su$-torus. As a consequence, volume-preserving diffeomorphisms in this context are ergodic in the absence of $su$-tori, thereby confirming the Hertz-Hertz-Ures Ergodicity Conjecture for this class. 
  
  We show the existence of transitive Anosov flows on a closed 3-manifold admitting a non-wandering partially hyperbolic diffeomorphism with quasi-isometric center and fundamental group of exponential growth. Furthermore, we provide a complete classification of these diffeomorphisms, showing they fall into two categories: skew products and discretized Anosov flows.
\end{abstract}

\maketitle
\tableofcontents


\section{Introduction}

The study of partially hyperbolic dynamics, regarded as an extension of uniformly hyperbolic dynamics, originated from the work of Brin and Pesin \cite{BrinPesin74} in the study of skew products and frame flows, and independently from Pugh and Shub \cite{PughShub72} in the study of Anosov actions. A partially hyperbolic diffeomorphism $f: M\rightarrow M$ of a closed Riemannian manifold $M$ presents hyperbolicity in its extreme subbundles, $E^s$ and $E^u$, within an invariant splitting $TM=E^s\oplus E^c\oplus E^u$. Specifically, the differential map $Df$ exponentially contracts vectors in $E^s$ and exponentially expands vectors in $E^u$, while the intermediate subbundle $E^c$ exhibits dominated pointwise spectrum. This center behavior results in intricate and subtlety in the study of partially hyperbolic dynamics.

Ergodicity and transitivity are two fundamental objects in partially hyperbolic dynamics from the measure-theoretical and topological perspectives, respectively. In contrast, partial hyperbolicity arises naturally in the theory of stable ergodicity and robust transitivity \cite{DPU}. 

Unlike uniformly hyperbolic systems, partially hyperbolic systems can formally lose stability in the topological conjugacy point of view under a small perturbation, even in dimension 3--the smallest dimension allowing for partial hyperbolicity. 
This sensitivity, coupled with the complex dynamical behavior in the center direction, complicates the determination of ergodic property for partially hyperbolic systems and whether we can effectively describe a partially hyperbolic diffeomorphism through a comparable model.

This paper addresses several problems related to ergodicity and transitivity within a broad class of partially hyperbolic diffeomorphisms. Our work engages with the Hertz-Hertz-Ures Ergodicity Conjecture, extending its verification within a class of diffeomorphisms displaying non-hyperbolic center behavior. In the topological context, we provide a sufficient condition for a closed 3-manifold to support a transitive Anosov flow, and present a complete classification for a somehow broad class of partially hyperbolic diffeomorphisms. Furthermore, we reveal an intrinsic correlation between ergodicity and transitivity. These seemingly unrelated problems are intrinsically intertwined in the current paper.

\subsection{Accessibility and ergodicity in dimension three}

The first result concerns the problem of determining dynamical or topological obstructions for a partially hyperbolic diffeomorphism to be ergodic in three-dimensional manifolds.

Building upon foundational techniques introduced by Hopf \cite{Hopf1939}, Anosov and Sinai \cite{Anosov1967, AnosovSinai67} extend the argument to establish the ergodicity of uniformly hyperbolic systems. However, the adaptation of Hopf argument to the partially hyperbolic context becomes significantly complicated by the presence of non-hyperbolic distribution. In response, the accessible property, capturing a form of pathwise hyperbolicity, serves as a decisive factor for ergodicity inspired by Pugh-Shub Stable Ergodicity Conjecture. Their conjecture also suggests that accessibility is a generic property for partially hyperbolic systems.  Subsequent developments, as demonstrated in \cite{08invent, BW10annals, ACW21}, confirm that ergodicity is an abundant property among volume-preserving partially hyperbolic diffeomorphisms in dimension three. Nonetheless, the existence of numerous simple non-ergodic examples underscores the inherent subtlety of the problem and leads to tremendous interest in inquiring how abundant ergodic diffeomorphisms are.

Following their presentation of the first manifold on which all conservative partially hyperbolic diffeomorphisms are ergodic \cite{2008nil}, Hertz, Hertz, and Ures formulated the Ergodicity Conjecture proposing a necessary topological obstruction to ergodicity in three-dimensional manifolds \cite{2011TORI,2018survey}.

\begin{conj}[Hertz-Hertz-Ures Ergodicity Conjecture]\label{HHU-conj}
	If a $C^r, r>1,$ conservative partially hyperbolic diffeomorphism of a closed 3-manifold does not admit any embedded 2-torus tangent to $E^s\oplus E^u$, then it is ergodic.
\end{conj}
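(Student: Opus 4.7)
The plan is to follow the now-standard two-stage paradigm for conservative partially hyperbolic dynamics: first reduce ergodicity to accessibility, then prove accessibility directly from the no-$su$-torus hypothesis. For the first stage I would combine the Burns--Wilkinson criterion \cite{BW10annals} with the Avila--Crovisier--Wilkinson theorem \cite{ACW21}, which together establish that any $C^{1+}$ volume-preserving partially hyperbolic diffeomorphism of a closed manifold that is accessible is already ergodic, without any center-bunching assumption. This reduces Conjecture~\ref{HHU-conj} to the purely topological-dynamical statement: \emph{a $C^{1+}$ conservative partially hyperbolic diffeomorphism of a closed $3$-manifold with no $su$-torus is accessible}.

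The approach to the accessibility step is by contradiction. Assume $f$ is not accessible. By the structure theorem of Rodriguez-Hertz--Rodriguez-Hertz--Ures, the union $\Gamma$ of the non-open accessibility classes is a nonempty, compact, $f$-invariant, saturated set that laminates by $C^1$ immersed surfaces tangent to $E^s\oplus E^u$; I would separately rule out point and circle accessibility classes inside $\Gamma$ by combining volume preservation with a cocycle/holonomy argument in the spirit of \cite{2011TORI}. The task then reduces to producing a compact leaf of this lamination, since any compact surface tangent to $E^s\oplus E^u$ must be a $2$-torus, again essentially by \cite{2011TORI}.

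To exhibit a compact leaf I would pass to a minimal sublamination $\Lambda\subseteq\Gamma$ and analyze its transverse structure. There are three possibilities for $\Lambda$: a single compact leaf (the desired $su$-torus), a foliation by compact leaves (same conclusion), or an exceptional minimal set all of whose leaves are dense and non-compact. I would attempt to exclude the third by combining (i) a Plante-type polynomial-growth argument for the leaves of $\Lambda$, exploiting that the $su$-holonomy is controlled by the restriction of $f$ to $\Lambda$; (ii) the construction of a transverse invariant measure obtained by disintegrating the $f$-invariant volume along $\Lambda$; and (iii) Novikov-type closed-leaf theorems for codimension-one foliations of closed $3$-manifolds, forcing the presence of a compact leaf or a Reeb-like obstruction incompatible with tangency to $E^s\oplus E^u$.

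The main obstacle I anticipate is the non-dynamically-coherent case combined with an exceptional minimal set $\Lambda$: without a global center foliation, disintegrating volume transverse to $\Lambda$ is delicate and the center holonomy across $\Lambda$ can be highly singular, so the transverse-measure construction in step (ii) may fail outright. In the quasi-isometric setting of the present paper this is precisely the difficulty that the quasi-isometry hypothesis dissolves, by supplying global control of center leaves and hence of center holonomy. For the full conjecture one would need to replace this geometric input by a finer dichotomy along the lines of the emerging Barthelm\'e--Fenley--Frankel--Potrie classification programme for partially hyperbolic diffeomorphisms of $3$-manifolds, reducing the problem to a short list of explicit models on which accessibility can be verified directly. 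I expect this classification-based reduction, rather than the transverse-measure step itself, to be where the bulk of the remaining work lies.
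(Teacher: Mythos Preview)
This statement is an open conjecture; the paper does not prove it in full, only the special case with quasi-isometric center (Theorem~\ref{accessible}), so there is no complete paper proof to compare against.

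Your reduction to accessibility is correct and standard. The genuine gap lies in your core step: producing a compact leaf of the minimal $su$-sublamination $\Lambda$. The Rodriguez-Hertz--Rodriguez-Hertz--Ures structure theorem (\thmref{lamination} here) already asserts that in the two non-torus alternatives $\Gamma(f)$ extends to a foliation \emph{without compact leaves}; the problem is to exclude those alternatives, not to locate a compact leaf inside them. Your proposed tools cannot do this: Novikov's theorem yields no compact leaf on $3$-manifolds with non-virtually-solvable $\pi_1$ (weak-stable foliations of Anosov flows are the standard counterexamples), and Plante's polynomial-growth criterion fails because the leaves of $\Lambda$ are typically Gromov hyperbolic, hence of exponential area growth (\propref{Gromov}). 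The paper's proof of its special case (\thmref{accessible-complete}) runs in the opposite direction: it \emph{exploits} the hyperbolic geometry of the non-compact $su$-leaves, together with completeness of $\mF^{cs}$ and $\mF^{cu}$ supplied by the quasi-isometric hypothesis (\propref{complete}), to carry out an ideal-boundary analysis that eventually forces a closed stable leaf---the contradiction is with partial hyperbolicity itself, not with the absence of an $su$-torus. Any attack on the full conjecture will need a substitute for that completeness input rather than a compact-leaf mechanism.
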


The existence of such a 2-torus, as described in the conjecture above, imposes constraints on the topology of ambient manifolds, restricting them to one of three types of manifolds with virtually solvable fundamental groups \cite{2011TORI}.

A great deal of recent works has been dedicated to verifying the Hertz-Hertz-Ures Ergodicity Conjecture in specific manifolds \cite{2008nil,HamU14CCM,GS20DA,2020Seifert,FP_hyperbolic}. Beyond results on particular manifolds, notable progress has been made in affirming this conjecture for broader classes of partially hyperbolic diffeomorphisms \cite{FP_hyperbolic,FP21accessible,FU1}. It is worth mentioning that \cite{FU1} establishes the ergodicity of all conservative partially hyperbolic diffeomorphisms without periodic points regardless of the structure of underlying closed 3-manifold. This result contrasts with prior works, which either focus on specific manifolds or rely on assumptions about the topology of the ambient manifold.

In this work, we provide a positive answer to Hertz-Hertz-Ures Ergodicity Conjecture for a class of partially hyperbolic diffeomorphisms exhibiting non-hyperbolic center behavior.

We denote by $NW(f)$ the non-wandering set of a partially hyperbolic diffeomorphism $f$ throughout the paper. We say that a partially hyperbolic diffeomorphism $f: M\rightarrow M$ has \emph{quasi-isometric center} if there are constants $r, R>0$ and an $f$-invariant center foliation $\mF^c$ such that $f^n(\mF^c_r(x))\subset \mF^c_R(f^n(x))$ for any $x\in M$ and any $n\in \Z$. Equivalent formulations and a slightly refined definition of quasi-isometric action in the center are discussed in Section \ref{section-pre} and Section \ref{section-complete}. 

It is noteworthy that this class of partially hyperbolic diffeomorphisms has also been studied in \cite{CP22_principle}, which sheds light on certain properties of ergodic measures.

\begin{mainthm}\label{accessible}
	Let $f: M^3\rightarrow M^3$ be a $C^1$ partially hyperbolic diffeomorphism with quasi-isometric center on a closed 3-manifold whose fundamental group is not virtually solvable. If $NW(f)=M$, then $f$ is accessible. In particular, if $f$ is $C^r$ with $r>1$ and conservative, then it is a (stably) K-system and thus (stably) ergodic.
\end{mainthm}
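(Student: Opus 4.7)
The plan is to argue by contradiction: suppose $f$ is not accessible, and deduce the existence of an $f$-invariant $2$-torus tangent to $E^s\oplus E^u$, i.e.\ an $su$-torus. By the Hertz--Hertz--Ures classification of manifolds admitting an $su$-torus \cite{2011TORI}, the presence of such a torus would force $\pi_1(M)$ to be virtually solvable, contradicting the hypothesis. The heart of the proof is thus converting failure of accessibility into the existence of a compact leaf of an $f$-invariant $su$-lamination.

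The starting point is the Hertz--Hertz--Ures dichotomy \cite{2011TORI}: if a partially hyperbolic diffeomorphism $f$ of a closed $3$-manifold is not accessible, then the set of non-open accessibility classes forms a nonempty, closed, $f$-invariant lamination $\Lambda$ by complete $C^1$ surfaces tangent to $E^s\oplus E^u$. Each leaf of $\Lambda$ is a plane, cylinder, M\"obius band, or torus, so the task reduces to proving that $\Lambda$ must contain a compact leaf.

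The main obstacle, and the place where both the quasi-isometric center and the non-wandering hypothesis enter, is ruling out non-compact minimal sublaminations. I would pass to the universal cover $\wt M$ and use the quasi-isometric hypothesis to show that the lifted center foliation $\wt{\mF^c}$ is by properly embedded lines with Hausdorff, one-dimensional leaf space $L$, on which $\pi_1(M)$ acts by homeomorphisms. Dynamical coherence combined with the quasi-isometric estimate then yields a global product-type structure: each lifted $su$-leaf meets every lifted center leaf in at most one point, so the lifted lamination $\wt\Lambda$ projects onto a closed, $f_{*}$- and $\pi_1$-invariant set $K\subset L$. The condition $NW(f)=M$ forces recurrence of the leaves of $\Lambda$ under $f$. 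To extract a compact leaf I would combine two ingredients: (i) a Denjoy-type holonomy analysis, exploiting the uniform transverse structure provided by the quasi-isometric center to rule out exceptional minimal subsets of $\Lambda$; and (ii) the observation that a minimal sublamination without a compact leaf would project to a minimal, $\pi_1$-invariant Cantor-like subset of $L$, whose existence places strong constraints on $\pi_1(M)$ via the classification of group actions on the line (H\"older-type rigidity together with results on amenable transverse actions), incompatible with the non-virtual-solvability hypothesis. Either ingredient, correctly executed, produces a compact leaf of $\Lambda$, hence, after passing to an orientation double cover if necessary, an $su$-torus, giving the desired contradiction.

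Finally, with accessibility in hand, the K-property in the $C^r$, $r>1$, conservative setting follows from the Burns--Wilkinson theorem \cite{BW10annals}: center bunching is automatic in dimension-one center, so accessibility implies the Kolmogorov property and, \emph{a fortiori}, ergodicity. The stability assertion then follows from Didier's $C^1$-openness of accessibility for partially hyperbolic diffeomorphisms with one-dimensional center, together with the evident $C^1$-openness of the quasi-isometric center condition itself.
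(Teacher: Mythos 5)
The reduction you set up --- if $f$ is not accessible, the non-open accessibility classes form a closed invariant lamination tangent to $E^s\oplus E^u$, and a compact leaf would be an $su$-torus forcing $\pi_1(M)$ to be (virtually) solvable --- is exactly how the paper begins (via \thmref{lamination} and \thmref{Anosovtorus}). The gap is in your mechanism for handling a minimal sublamination $\Lambda^{su}$ \emph{without} compact leaves, which is the entire difficulty. First, a dimensional error: the leaf space of the one-dimensional center foliation of a $3$-manifold is two-dimensional, not a line; the object that is one-dimensional (and what the paper actually uses, via \propref{R-covered}) is the leaf space of the lifted lamination $\wt{\Lambda}^{su}$, which is $\R$ after collapsing complementary $I$-bundles. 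Second, even with the correct leaf space, your ingredient (ii) does not work: an action of $\pi_1(M)$ on $\R$ preserving a minimal (possibly Cantor) set places essentially no constraint forcing virtual solvability --- surface groups and free groups act on the line with exceptional minimal sets, and H\"older's theorem only applies to \emph{free} actions, whereas here leaves with nontrivial fundamental group produce fixed points of the action. Third, your ingredient (i), a Denjoy/Sacksteder-type holonomy argument, requires transverse regularity ($C^2$ or at least $C^{1+\mathrm{Lip}}$ holonomy) that is unavailable: $f$ is only $C^1$ and the lamination has merely continuous transverse structure. Note also that the paper never tries to \emph{produce} a compact leaf in this case; it derives a contradiction directly.

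What the paper does instead: after collapsing the complementary regions, $\Lambda^{su}$ becomes a minimal foliation of a closed $3$-manifold, and non-virtual-solvability of $\pi_1(M)$ is used (via \propref{Gromov}) to make all leaves uniformly Gromov hyperbolic. One then works with the ideal boundary circles of the lifted leaves and the landing points of the rays of the stable subfoliation $\wt{\Lambda}^s_L$. If these landing points fail to be dense in some $\partial_\infty L$, the foliation is non-uniform and must be the stable foliation of a suspension Anosov flow, giving solvable $\pi_1(M)$, a contradiction. If they are dense, the completeness of $\mF^{cs}$ and $\mF^{cu}$ --- which is where the quasi-isometric center hypothesis actually enters, through \propref{complete} --- forces the stable leaves to be uniform quasi-geodesics with a common ideal point, and a deck transformation fixing a leaf acts as a hyperbolic M\"obius transformation whose axis traps an invariant stable leaf, producing a closed stable manifold, which is impossible. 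Your proposal contains no substitute for this geometric step. Your final paragraph on the K-property and stable ergodicity (Burns--Wilkinson plus Didier's $C^1$-openness of accessibility, with Avila's $C^1$-approximation by smooth volume-preserving maps to realize the stability) is correct and matches the paper.
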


Recently, Meysam Nassiri proposed us a long-considered conjecture that establishes a connection between two fundamental properties in partially hyperbolic dynamics: ergodicity and transitivity. This conjecture, which can be deduced from Conjecture \ref{HHU-conj} when the manifold has non-virtually solvable fundamental group, is particularly anticipated to hold in the case of one-dimensional center. We formulate this conjecture here in the context of three-dimensional manifolds:
\begin{conj}[Nassiri]
	Every $C^{1+\alpha}$ transitive volume-preserving partially hyperbolic diffeomorphism of a closed 3-manifold is ergodic.
\end{conj}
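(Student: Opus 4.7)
Since transitivity implies $NW(f)=M$, Theorem A already establishes Nassiri's conjecture for the subclass of partially hyperbolic diffeomorphisms with quasi-isometric center on closed 3-manifolds whose fundamental group is not virtually solvable. The plan is therefore to remove both restrictions and proceed in three steps.

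First I would split according to $\pi_1(M)$. If $\pi_1(M)$ is virtually solvable, then $M$ is, up to finite cover, a 3-torus, a nilmanifold, or a mapping torus of a hyperbolic automorphism. On these manifolds the Hertz--Hertz--Ures Ergodicity Conjecture has been confirmed in several instances \cite{2008nil,HamU14CCM,GS20DA,2020Seifert}, so the remaining work is to dispatch the residual cases, most plausibly by combining the known leaf-conjugacy classifications with a direct Hopf-type argument that uses transitivity to rule out any invariant $2$-torus from being tangent to $E^s\oplus E^u$ (a dense orbit cannot be confined to either side of an $su$-torus and cannot cross it, giving a quick contradiction once coherence is established).

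Second, for $\pi_1(M)$ not virtually solvable, the key question is whether transitivity alone, together with volume preservation, forces the center foliation to behave quasi-isometrically on a set of full measure. I would attempt to show that for a conservative transitive $f$ the center foliation is dynamically coherent and its leaves grow at most linearly under large iterates; this could be pursued by combining the density of both forward and backward orbits with the \emph{a priori} bounds on center distortion developed in \cite{CP22_principle} and in the proof of Theorem A. If such a reduction succeeds, Theorem A closes the argument.

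The main obstacle lies in bridging transitivity to accessibility without any structural control on $\mF^c$. In Theorem A the quasi-isometry of the center provides the geometric mechanism that keeps accessibility classes from accumulating on compact invariant sets; absent this, one is forced to rely on recurrence alone, and the crux is to exclude the coexistence of a dense orbit with a single non-open accessibility class. I expect this to require a genuinely new argument beyond the classical Hopf template, perhaps via a Pesin-theoretic dichotomy: either the center Lyapunov exponent vanishes on a positive measure set, forcing a coherent structure compatible with the hypotheses of Theorem A, or it is nonzero almost everywhere, in which case non-uniform hyperbolicity and the density of stable/unstable manifolds should yield ergodicity directly. Making either branch of this dichotomy rigorous is where I would expect the main effort to lie.
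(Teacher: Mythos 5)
You should first be clear that this statement is stated in the paper as an open conjecture (attributed to Nassiri), not as a theorem: the paper does not prove it, and what it actually establishes is the special case with quasi-isometric center, namely \thmref{ergodic}. Measured against that, your proposal is a research program rather than a proof, and you acknowledge as much in your last paragraph; but one concrete step you present as routine is in fact false. You claim that for a transitive diffeomorphism one gets ``a quick contradiction'' from the existence of an invariant $2$-torus tangent to $E^s\oplus E^u$, because a dense orbit can neither be confined to one side nor cross it. The product of an Anosov automorphism of $\T^2$ with an irrational rotation of $S^1$ is a transitive, volume-preserving, partially hyperbolic diffeomorphism of $\T^3$ whose tori $\T^2\times\{t\}$ are all tangent to $E^s\oplus E^u$; transitivity is achieved because no single such torus is invariant (they are permuted), so your dichotomy does not apply. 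The paper's own proof of \thmref{ergodic} in the virtually solvable case goes the opposite way: it cites \cite{HS21DA} to conclude that either $f$ is accessible or an $su$-torus exists, and in the latter case it does \emph{not} derive a contradiction but instead shows $M$ is foliated by $su$-tori, produces a periodic compact center leaf via \propref{periodiccompact}, upgrades it to a $C^2$ circle by normal hyperbolicity, applies Herman's theorem to the irrational circle diffeomorphism $f|_{\gamma}$, and then deduces ergodicity of $f$ from ergodicity along $\gamma$ together with the Hopf-type argument of \cite{08invent,BW10annals}. Note that this entire mechanism (the periodic compact center leaf, the boundedness of $\|Df^n|_{E^c}\|$ needed for $2$-normal hyperbolicity) depends essentially on the quasi-isometric center hypothesis, which is exactly what your program tries to discard.

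For the non-virtually-solvable case, your reduction to \thmref{accessible} likewise has no content yet: you propose to show that transitivity plus volume preservation forces quasi-isometric behavior of the center on a full-measure set, but no mechanism is offered, and the known zoo of examples (e.g.\ perturbations of time-one maps of Anosov flows with nonzero center exponents) shows that the center is generically \emph{not} quasi-isometric, so this reduction cannot work as stated. The Pesin-theoretic dichotomy you sketch in the final paragraph is essentially a restatement of the difficulty of the Hertz--Hertz--Ures conjecture itself rather than a route around it. In short: the one step you claim is easy is wrong, and the steps you flag as hard are indeed where the conjecture remains open; the paper resolves neither, and only proves the statement under the additional quasi-isometric center hypothesis.
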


Our next result builds on the conjecture above, demonstracting that under the quasi-isometric center condition, transitivity and ergodicity are indeed fundamentally aligned.

\begin{mainthm}\label{ergodic}
	Let $f: M^3\rightarrow M^3$ be a $C^2$ conservative partially hyperbolic diffeomorphism of a closed 3-manifold with quasi-isometric center. Then, it is ergodic if and only if it is transitive.
\end{mainthm}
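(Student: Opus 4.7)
The plan is to handle the two implications separately and concentrate effort on the transitive-implies-ergodic direction. The opposite direction is immediate: a $C^2$ conservative partially hyperbolic diffeomorphism preserves a smooth volume of full support, so ergodicity yields a full-measure set of points with dense forward orbit, whence $f$ is transitive.

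For the converse, assume $f$ is transitive, so in particular $NW(f)=M$. The natural dichotomy is according to whether $\pi_1(M)$ is virtually solvable. If it is not virtually solvable, \thmref{accessible} applies verbatim and yields accessibility; combined with the Burns--Wilkinson theorem for $C^{1+\alpha}$ accessible partially hyperbolic diffeomorphisms with one-dimensional (hence automatically center-bunched) center, this gives the K-property and in particular ergodicity, so there is nothing new to prove in this case.

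The substance lies in the virtually-solvable case. In dimension three this restricts $M$ to be finitely covered by $\mathbb{T}^3$, a Heisenberg nilmanifold, or a hyperbolic torus bundle. I would feed $f$ into the classification developed in the later sections of the paper, which places a non-wandering partially hyperbolic diffeomorphism with quasi-isometric center in one of two structural classes: a skew product over a linear Anosov automorphism of a nilmanifold, or a discretized Anosov flow. In the skew-product subcase the center cocycle is circle-valued; I would argue that transitivity of $f$ rules out the only obstructions to ergodicity, namely reducibility of the cocycle to a proper subgroup, and then invoke Brin's ergodicity criterion for compact group extensions of Anosov diffeomorphisms to conclude. In the discretized Anosov flow subcase the underlying flow on the solvmanifold is a suspension of a hyperbolic toral automorphism, which is Bernoulli; the quasi-isometric hypothesis forces the discretization function to be cohomologous to a constant, so $f$ is essentially a time-$t$ map of a mixing volume-preserving Anosov flow, and ergodicity follows from classical arguments.

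The main obstacle is precisely this virtually-solvable case: one cannot appeal to accessibility, since $su$-tori and non-ergodic conservative examples genuinely exist on these manifolds, and the whole game is to extract ergodicity from transitivity through the rigidity of the two structural models. The delicate points are excluding reducible cocycles in the skew-product subcase and resonant discretization functions in the discretized Anosov subcase, in both instances genuinely using transitivity rather than mere non-wandering, and carefully verifying that the quasi-isometric hypothesis is the right normalization to make these reductions work on all three types of manifolds simultaneously.
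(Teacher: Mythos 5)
The easy direction (ergodic $\Rightarrow$ transitive) and the non--virtually-solvable case of the converse match the paper exactly, so the issue is entirely in the virtually solvable case, where your route genuinely diverges from the paper's and, as written, does not close. The paper does \emph{not} pass through the classification there. It instead invokes the dichotomy of Hammerlindl--Shi: either $f$ is accessible (done), or there is an $su$-torus. In the latter case transitivity forces $M$ to be foliated by $su$-tori; Proposition~\ref{periodiccompact} produces a periodic compact center leaf $\gamma$, which is a $C^2$ circle by $2$-normal hyperbolicity (here the quasi-isometric hypothesis enters, via the uniform bound on $\|Df^n|_{E^c}\|$), has irrational rotation number by transitivity, and is therefore ergodic by Herman's theorem. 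Any measurable $f$-invariant $su$-saturated set then meets $\gamma$ in a null or conull set, so it is null or conull in $M$, and the Hopf-type argument of Burns--Wilkinson and Rodriguez Hertz--Rodriguez Hertz--Ures yields ergodicity. Transitivity is used only to get the foliation by $su$-tori and the irrationality of the rotation number.

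Your proposed alternative has three concrete gaps. First, the classification (Theorems~\ref{classify_transitive}, \ref{classify_NW_strong}) gives only a \emph{topological} conjugacy to a skew product, and only after a finite lift and iterate; a topological conjugacy does not transport the invariant volume, so ergodicity of a model skew product says nothing about ergodicity of $f$ with respect to Lebesgue. Second, the skew products in this paper are explicitly not compact group extensions --- the fiber maps need not be isometries and the bundle need not be principal --- so Brin's ergodicity criterion for compact group extensions of Anosov diffeomorphisms is not applicable, and "transitivity rules out reducibility of the cocycle" is not a substitute for an argument in this non-isometric setting. Third, the claim that the quasi-isometric hypothesis forces the discretization function of a discretized Anosov flow to be cohomologous to a constant is false: by the result of Martinchich quoted in Proposition~\ref{QI-DAF}, \emph{every} discretized Anosov flow already has quasi-isometric center foliation, so the hypothesis imposes no rigidity on $\tau$ whatsoever; moreover the constant-roof suspension of a hyperbolic toral automorphism is not mixing, so even the intended endgame of that subcase is misstated. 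The manifold-by-manifold cocycle analysis you sketch would have to be built essentially from scratch, whereas the paper's circle-plus-Hopf argument bypasses all of it.
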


We emphasize that the theorem above is sharp within the given context, as it would not hold if the transitivity condition is weakened to the non-wandering property. Indeed, one can readily contruct non-wandering examples that fail to be ergodic; for instance, the direct pruduct of an Anosov diffeomorphism with the identity map on the 3-torus serves as a clear counterexample.

\subsection{Existence of Anosov flows}

Our next result highlights a relevance between the existence of partially hyperbolic diffeomorphisms and Anosov flows on 3-manifolds.

The study of Anosov flows originated as a natural generalization of geodesic flows on negatively curved closed Riemannian manifolds. The foundational works of Margulis \cite{Margulis67} and Plante-Thurston \cite{PT72} established that exponential growth of the fundamental group is a necessary condition for a 3-manifold to admit an Anosov flow. Classic examples of transitive Anosov flows include suspensions of hyperbolic toral automorphisms and geodesic flows on unit tangent bundles of negatively curved manifolds. Beyond these, the landscape of Anosov flows is significantly richer, with notable constructions by Franks–Williams \cite{FranksWilliams}, Handel–Thurston \cite{HT80}, Goodman \cite{Goodman83}, and Fried \cite{Fried83}. A pivotal development in the field has been the use of topological surgeries, particularly Dehn–Fried–Goodman surgery, as a technique for constructing Anosov flows. 

Due to diversity of such surgeries, the topological classification of Anosov flows in 3-manifolds becomes challenging. Complete classifications have been achieved for solvable manifolds \cite{Plante81}, Seifert manifolds \cite{Ghys84}, and certain classes of graph manifolds \cite{Barbot98}. Building on the fundamental contributions of Thurston and Perelman, hyperbolic 3-manifolds have garnered significant attention regarding obstructions to the existence of Anosov flows. Notably, there are infinitely many hyperbolic 3-manifolds that admit Anosov flows, as shown by Goodman \cite{Goodman83} and Thurston \cite{Thurston78-note}, while there are also infinitely many hyperbolic 3-manifolds that do not, due to the absence of Reebless foliations \cite{CalegariDunfield03,RSS03} or essential laminations \cite{Fenley07}. Recent progress on classifications of Anosov flows can be found in \cite{BarbotFenley13, Yu23, BFM22transitive, BFM23, BBM24nontransitive}.

The interplay between Anosov flows and the topology of 3-manifolds is profound. Similarly, the existence of partially hyperbolic diffeomorphisms also influences 3-manifold topology \cite{BI08,2011TORI}. Investigating the relationship between the existence of these two dynamical systems remains an active area of interest. One can notice that, while every Anosov flow generates a partially hyperbolic diffeomorphism via its time-one map, the converse is not always true; 3-tori and nilmanifolds, for example, admit partially hyperbolic diffeomorphisms but not Anosov flows.

This paper addresses topological obstructions to the existence of Anosov flows arising from partially hyperbolic systems. Specifically, we examine the following question:

\begin{qus}
	Let $M$ be a closed 3-manifold whose fundamental group has exponential growth and which admits a (transitive) partially hyperbolic diffeomorphism. Does $M$ support a (transitive) Anosov flow?
\end{qus}

This problem was recently studied in \cite{FP-gafa} for hyperbolic 3-manifolds and certain class of Seifert manifolds. We provide a partial answer to this question for a broad class of partially hyperbolic diffeomorphisms.
\begin{mainthm}\label{flow}
	Let $M$ be a closed 3-manifold admitting a non-wandering partially hyperbolic diffeomorphism with quasi-isometric center. If the fundamental group $\pi_1(M)$ has exponential growth, then $M$ admits a transitive Anosov flow.
\end{mainthm}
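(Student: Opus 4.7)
My strategy is to split into two cases based on the algebraic structure of $\pi_1(M)$. Among closed 3-manifolds whose fundamental group is virtually solvable, only those modeled on $\mathrm{Sol}$ geometry have $\pi_1$ of exponential growth; the other virtually solvable cases ($T^3$, nilmanifolds and their finite quotients) have polynomial growth. A $\mathrm{Sol}$-manifold is, up to finite cover, the mapping torus of a hyperbolic automorphism of $T^2$, and the associated suspension flow is a transitive Anosov flow. This handles the virtually solvable case independently of $f$.

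Assume now that $\pi_1(M)$ is not virtually solvable. The hypotheses of Theorem~\ref{accessible} are then satisfied (non-wandering, quasi-isometric center, non-virtually-solvable topology), so $f$ is accessible. The heart of the proof is to promote this accessibility, together with the quasi-isometric control on the center foliation, into a structural identification: $f$ is a discretized Anosov flow, meaning there exist a topological Anosov flow $\phi_t$ on $M$ and a continuous positive function $\tau$ with $f(x)=\phi_{\tau(x)}(x)$. To carry this out I would first exploit the quasi-isometry hypothesis in the universal cover to show that the lifted center leaves are uniformly quasi-geodesic lines, and combine this with branching $su$-foliations in the spirit of Burago--Ivanov and Hammerlindl--Potrie to produce a continuous $f$-invariant unit vector field tangent to $E^c$ whose flow is Anosov. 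Accessibility is what prevents the alternative, skew-product branch of the dichotomy: a skew-product model would force the base of a Seifert-type fibration to carry an Anosov homeomorphism, hence to be $T^2$, giving $\pi_1(M)$ virtually polynomial growth and contradicting our standing assumption. This identification step is the main obstacle and will absorb the leaf-space analysis developed in the later sections of the paper.

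Once $f$ is written as $f(x)=\phi_{\tau(x)}(x)$, the flow $\phi_t$ is the desired Anosov flow on $M$; it only remains to verify transitivity. The assumption $NW(f)=M$ implies $NW(\phi_t)=M$: every $\phi_t$-orbit meets an $f$-orbit within a bounded time, and the non-wandering property is preserved under such bounded-time reparametrization along the same one-dimensional orbits. By Anosov's spectral decomposition theorem, $NW(\phi_t)$ is a finite disjoint union of closed transitive basic sets; connectedness of $M$ then forces a single basic set equal to $M$, so $\phi_t$ is transitive, completing the proof.
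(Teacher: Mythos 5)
Your overall architecture (classify $f$, then extract the flow from the discretized/collapsed Anosov structure) matches the paper's, which deduces Theorem~\ref{flow} from Theorem~\ref{classify_NW_strong}. However, there are two genuine gaps. First, in the virtually solvable case you produce the suspension Anosov flow only on a \emph{finite cover} of $M$, not on $M$ itself. Anosov flows do not automatically descend along finite covers (the paper explicitly flags this sensitivity, and by the Plante--Barbot classification a Sol-manifold that is merely finitely covered by a hyperbolic torus bundle need not carry any Anosov flow). The paper avoids this by first pushing the discretized Anosov flow structure down to $M$: it projects a periodic compact center leaf from the cover, checks that all center leaves in its (un)stable saturate are periodic, and invokes the second item of the Bonatti--Wilkinson criterion (Theorem~\ref{BW}) to get a discretized Anosov flow on $M$ itself. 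You would need some such descent argument, or to show the exceptional Sol-manifolds admit no such diffeomorphism.

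Second, even granting the identification of $f$ as a discretized Anosov flow, the flow $\phi_t$ you obtain is only a \emph{topological} Anosov flow (a continuous flow whose orbits are the center leaves), not a smooth Anosov flow; writing ``the flow $\phi_t$ is the desired Anosov flow'' skips the regularization step. The paper establishes transitivity of $\phi_t$ (via Theorem~\ref{BW}) and then invokes Shannon's theorem that a transitive topological Anosov flow on a closed 3-manifold is orbit equivalent to a genuine smooth Anosov flow; note that transitivity is an input to that theorem, so your order of operations (smooth first, transitivity after via spectral decomposition) cannot be carried out as written. A smaller point: your proposed route ``accessibility $+$ quasi-isometric center $\Rightarrow$ discretized Anosov flow'' via a quasi-geodesic center vector field only yields a (quasi-geodesic) \emph{collapsed} Anosov flow a priori --- the semiconjugacy $h\circ\beta=f\circ h$ does not by itself force $f$ to preserve each orbit. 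The paper instead uses accessibility through Brin's theorem to upgrade $NW(f)=M$ to transitivity and then applies the periodic-center-leaf analysis of Section~\ref{section-classify-transitive} to land in the skew-product/discretized dichotomy; for the purposes of Theorem~\ref{flow} the collapsed Anosov flow would still supply a topological Anosov flow on $M$, but you would then still owe the transitivity and smoothing steps.
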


This result represents a significant advance over previous classifications, as it imposes no assumptions on the specific topology of the ambient 3-manifold beyond the necessary condition of exponential growth for $\pi_1(M)$. Moreover, it addresses a notable challenge: finite covers, which often complicate the existence of Anosov flows \cite{CalegariDunfield03, RSS03}, are not required here, highlighting the generality of this result.

\subsection{Classification}
In the study of partially hyperbolic dynamics, a common approach has been to infer dynamical properties by analyzing explicit well-understood examples. Classical examples of 3-dimensional partially hyperbolic diffeomorphisms include systems derived from Anosov dynamics, skew products over Anosov automorphisms of the 2-torus, and variable time maps of Anosov flows. As previously noted, these systems are highly sensitive to perturbation at the level of topological conjugacy, adding significant delicacy to the problem of classifying partially hyperbolic diffeomorphisms topologically. 

This classification problem can be traced back to an informal conjecture by Pujals, later formulated by Bonatti and Wilkinson in \cite{BW05}. Roughly, the conjecture suggests that the three aforementioned classes form the foundation of a comprehensive list of transitive 3-dimensional partially hyperbolic diffeomorphisms. 

Significant progress towards classifying 3-dimensional partially hyperbolic diffeomorphisms has been achieved through two primary approaches. The first involves comparing specific classes of diffeomorphisms, either within certain manifolds or exhibiting distinct dynamical properties, to known models. This method culminated in the confirmation of the Pujals conjecture by Hammerlindl and Potrie \cite{Hammerlindl13, HP14, HP15} in case where the ambient closed 3-manifold has virtually solvable fundamental group. 

The second approach focuses on analyzing the local and global structures of dynamical foliations and their impact on the ambient manifolds. Locally, Bonatti and Wilkinson \cite{BW05} provided criteria that classify diffeomorphisms into the categories of skew products and discretized Anosov flows. These two classes comprehensively account for diffeomorphism exhibiting topologically neutral behavior in the center direction \cite{BZ20}. Globally, a desired classification has been achieved across a wide range of 3-manifolds \cite{BFFP1, BFFP2}. Furthermore, under specific rigid conditions, partially hyperbolic diffeomorphisms can also be conclusively categorized into the three fundamental classes outlined earlier \cite{CPH21}.

The classification problem has become increasingly challenging with the emergence of several anomalous examples \cite{BPP1,BGP2,BGHP3}. To accommodate these examples, recent works \cite{BFP23collapsed,FP23_transverse} introduced the concept of \emph{collapsed Anosov flows}, which broadens the framework for classifying partially hyperbolic diffeomorphisms and offers fresh insights into the problem. We refer readers to the surveys \cite{2018survey,HP18_survey,Potrie18ICM} for an account of this topic.

It is important to note that the classification problem is typically studied modulo finite lifts and iterates, as both topological and dynamical properties can change significantly under these operations. For instance, examples exist where the center foliation forms a non-trivial circle bundle, but becomes a skew products after a finite lift \cite{BW05}. Similarly, a lift of a discretized Anosov flow to a finite cover may fail to preserve all flow orbits. The existence of Anosov flows themselves is sensitive to finite lifts \cite{CalegariDunfield03,RSS03}, and the same sensitivity applies to partially hyperbolic diffeomorphisms conjectured to be discretized Anosov flows.

In this paper, we are going to provide a complete classification of the class of partially hyperbolic diffeomorphisms with quasi-isometric center, addressing the original problem posed by Pujals. Notably, the quasi-isometric center property is strictly weaker than the topologically neutral center condition studied in \cite{BZ20}. 

\begin{mainthm}\label{classify_NW_strong}
	Let $f: M^3\rightarrow M^3$ be a $C^1$ partially hyperbolic diffeomorphism of a closed 3-manifold with quasi-isometric center and $NW(f)=M$. Then, either
	\begin{itemize}
		\item up to a finite lift and iterate, $f$ is conjugate to a skew product over an Anosov automorphism on $\mathbb{T}^2$; or 
		\item $f$ admits an iterate that is a discretized Anosov flow.
	\end{itemize}
\end{mainthm}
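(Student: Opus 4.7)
The strategy is to dichotomize based on the fundamental group of $M$. I would first record that the quasi-isometric center condition implies dynamical coherence and plaque expansiveness, so that $f$ admits a unique $f$-invariant center foliation $\mathcal{F}^c$ whose dynamics on leaves is boundedly distorted; this should be extracted from the preliminary material of the paper together with standard arguments. Up to finite lifts and iterates, I would also arrange that $f$ preserves orientations of $E^s$, $E^c$, $E^u$, which is harmless with respect to the stated conclusion.

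If $\pi_1(M)$ is virtually solvable, I would appeal to the Hammerlindl--Potrie classification: after passing to a finite cover, $M$ is diffeomorphic to $\mathbb{T}^3$, a non-toral nilmanifold, or a solvmanifold, and $f$ is leaf conjugate to an algebraic model. The quasi-isometric center then rules out the derived-from-Anosov case on $\mathbb{T}^3$, since in that model the center eigenvalue is hyperbolic and the center action is not quasi-isometric. On $\mathbb{T}^3$ or a non-toral nilmanifold the remaining model is a skew product over a hyperbolic toral automorphism, and the leaf conjugacy should be promoted to a topological conjugacy with the model using the bounded center distortion. On a solvmanifold, the model is the time-one map of a suspension Anosov flow, which is already a discretized Anosov flow, and one verifies that $f$ inherits this structure after passing to an iterate.

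If $\pi_1(M)$ is not virtually solvable, then for closed 3-manifolds supporting partial hyperbolicity this forces $\pi_1(M)$ to have exponential growth. Theorem~A then yields accessibility of $f$, and Theorem~C furnishes a transitive Anosov flow $\Phi$ on $M$. Presumably the proof of Theorem~C produces $\Phi$ from $f$ itself via the center foliation, so that the orbit foliation $\mathcal{F}^\Phi$ and the center foliation $\mathcal{F}^c_f$ are comparable already by construction. The next step is to show that after a finite iterate the leaves of $\mathcal{F}^c_f$ coincide with orbits of $\Phi$ and that $f$ preserves each flow orbit; here I would combine rigidity for pairs of $\mathbb{R}$-covered foliations in the universal cover, in the Barbot--Fenley style, with the quasi-isometric control on lifted center leaves, and use accessibility to rule out topological branching. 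The neutral (boundedly distorted) action of $f$ along center leaves then places $f$ in the scope of the Bonatti--Wilkinson criterion from \cite{BW05}, as refined in \cite{BZ20}, for being a discretized Anosov flow.

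The main obstacle is the last step of the non-solvable case: turning the qualitative matching between $\mathcal{F}^c_f$ and $\mathcal{F}^\Phi$ into an honest discretized Anosov flow structure, and in particular ensuring that a uniform iterate of $f$ maps each flow orbit to itself rather than permuting orbits nontrivially. This will rely essentially on accessibility together with the quasi-isometric center to exclude subtle branching phenomena in the universal cover, and is the delicate technical heart of the argument; the virtually solvable case, by contrast, is mostly a matter of identifying which algebraic model survives the quasi-isometry constraint.
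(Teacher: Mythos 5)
Your dichotomy on $\pi_1(M)$ and your treatment of the virtually solvable case match the paper: there the Hammerlindl--Potrie classification is invoked, the derived-from-Anosov case is excluded because such a map is homotopic to a hyperbolic automorphism and hence cannot have quasi-isometric center, and the remaining models are the two listed classes. One point you gloss over is the descent from the finite cover in the discretized Anosov flow case: a lift of a discretized Anosov flow to a finite cover need not preserve orbits, so one cannot simply say $f$ ``inherits'' the structure. The paper handles this by observing that the projection of an invariant compact center leaf $\hat\gamma$ of the lifted map is an $f$-periodic compact center leaf $\gamma$ all of whose neighbors in $W^s(\gamma)$ and $W^u(\gamma)$ are $f$-periodic, which places $f$ in the scope of the second Bonatti--Wilkinson criterion (\thmref{BW}) and yields an iterate that is a discretized Anosov flow.

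The non-virtually-solvable case is where your proposal genuinely breaks down, for two reasons. First, it is circular relative to the paper's logical structure: \thmref{flow} is \emph{deduced from} \thmref{classify_NW_strong} in Section \ref{section-classify-NW}, so you cannot use the Anosov flow it furnishes as an input to prove the classification. Second, and more importantly, you miss the short route the paper actually takes: by \thmref{accessible} the map is accessible, and by Brin's theorem an accessible partially hyperbolic diffeomorphism with $NW(f)=M$ is \emph{transitive}; one then simply invokes the transitive classification \thmref{classify_transitive}, which was already established in Section \ref{section-classify-transitive} via the existence of a periodic compact center leaf (\propref{periodiccompact}), the finiteness of homoclinic intersection numbers (\propref{finitecenter}), and the two Bonatti--Wilkinson criteria. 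Your alternative plan --- matching $\mathcal{F}^c$ with the orbit foliation of an abstractly produced Anosov flow via Barbot--Fenley rigidity and then ruling out orbit permutation --- is exactly the part you concede you cannot complete, and it is not needed: once transitivity is in hand, the entire difficulty dissolves. Note also that in this case the skew-product alternative is automatically excluded since a circle bundle over $\T^2$ has virtually nilpotent fundamental group, so the conclusion is necessarily the discretized Anosov flow one.
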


A significant consequence of this classification is that the two classes of partially hyperbolic diffeomorphisms listed above provide a complete framework for applying the invariance principle established in \cite{CP22_principle} within the 3-dimensional setting.

During the preparation of this work, we became aware of a similar result obtained independently by Espitia, Martinchich and Potrie \cite{EMP}. Although both approaches address the same problem, we employ distinct techniques and perspectives. For this reason, we decide to present our results separately and simultaneously to provide complementary insights into the problem.

\subsection{Remarks and organization}
In the definition of a partially hyperbolic diffeomorphism with quasi-isometric center (see also \defref{QIdef-leaf}), we assume the existence of an invariant center foliation. However, as demonstrated by the examples in \cite{2016example,BGHP3}, such a foliation does not always exist. To address this, we propose an alternative notion that retains the quasi-isometric action in the center direction (see \defref{QIdef-curve}). As show in \propref{integrable}, integrability along the center still holds under the new definition. Furthermore, this alternative notion is, in fact, more restrictive than the original definition provided in \defref{QIdef-leaf} (see \propref{2QI}).

Although the results \thmref{accessible}, \thmref{ergodic}, \thmref{flow}, and \thmref{classify_NW_strong} are stated for the quasi-isometric center (foliation) as defined in \defref{QIdef-leaf}, the arguments can be directly adapted to partially hyperbolic diffeomorphisms with quasi-isometric center bundle, as defined in \defref{QIdef-curve}. For the sake of brevity and conciseness, we do not repeat these adaptations in this paper.

Another remark is that, in addition to \thmref{classify_NW_strong}, we present several weaker classification results, see \thmref{classify_transitive} and \thmref{classify_NW}. In \thmref{classify_transitive}, a classification is achieved under a stronger assumption of transitivity. \thmref{classify_NW} employs the same assumption as \thmref{classify_NW_strong} but includes an additional class, the collapsed Anosov flow, in its conclusion. These weaker results are presented due to their intrinsic connections: on the one hand, \thmref{accessible}, \thmref{classify_transitive}, and \thmref{classify_NW} can each be proved using independent arguments; on the other hand, \thmref{classify_NW} can be used to derive \thmref{accessible}, and conversely, it can be deduced from \thmref{classify_transitive} and \thmref{accessible}. Further details are provided in Section \ref{section-classify-NW}.

The structure of this paper is as follows. In Section \ref{section-pre}, we introduce key concepts related to the quasi-isometric center and two models central to this study. Section \ref{section-complete} discusses several definitions of central quasi-isometry and their relevance, as well as the integrability of the center bundle and the structure of center-stable and center-unstable foliations. We devote Section \ref{section-periodic} to the study of periodic compact center leaves. A classification under the transitivity assumption is presented in Section \ref{section-classify-transitive}, while Section \ref{section-accessible} contains the proofs of \thmref{accessible} and \thmref{ergodic}. Finally, Section \ref{section-classify-NW} provides a coarser classification for the non-wandering case and includes the proofs of \thmref{flow} and \thmref{classify_NW_strong}.


\section{The target models}\label{section-pre}

In this section, we clarify some notions relavent to partially hyperbolic dynamics that will be concerned with in this paper. We also provide a concise overview of two natural classes of partially hyperbolic diffeomorphisms with quasi-isometry center behavior. These two classes encompass all such systems with non-wandering property module finite lifts and iterates, as described in \thmref{classify_NW_strong}.

\subsection{Quasi-isometry in the center}
Let $M$ be a compact Riemannian manifold. We say a diffeomorphism $f: M\rightarrow M$ is \emph{(strongly pointwise)partially hyperbolic} if the tangent bundle of $M$ splits into three nontrivial invariant subbundles $TM=E^s \oplus E^c \oplus E^u$ such that for each $x\in M$ and for all unit vectors $v^\sigma \in E^\sigma_x$ ($\sigma= s, c, u$), 
\begin{equation*}
	\|Df(x)v^s\|<1<\|Df(x)v^u\| \\
	\quad and \quad
	\|Df(x)v^s\|< \|Df(x)v^c\|< \|Df(x)v^u\|.
\end{equation*}

The existence of stable and unstable foliations plays a critical role in the study of Anosov systems. The unique integrability of strong bundles for partially hyperbolic diffeomorphisms is established by the fundamental work \cite{BrinPesin74, PughShub72}. However, the integrability of center bundle is quite a delicate problem in the partially hyperbolic dynamics.

We say that $E^c$ is \emph{uniquely integrable} if there is an $f$-invariant integral foliation $\mF^c$ by $C^1$ leaves such that every $C^1$ embedded curve $\sigma: [0,1]\rightarrow M$ satisfying $\dot{\sigma}(t)\in E^c(\sigma(t))$ for any $t\in [0,1]$ is contained in the leaf $\mF^c(\sigma(0))$ through the initial point $\sigma(0)$. A partially hyperbolic diffeomorphism $f$ is \emph{dynamically coherent} if it admits two invariant foliations $\mF^{cs}$ and $\mF^{cu}$ tangent to the bundles $E^s\oplus E^c$ and $E^c\oplus E^u$, respectively. The dynamical coherence is a direct consequence of the unique integrability of $E^c$. We refer the reader to \cite{BurnsWilkinson08} for an account of relations between different types of central integrability.

\begin{defn}\label{QIdef-leaf}
	Let $f:M\rightarrow M$ be a $C^1$ partially hyperbolic diffeomorphism of a closed manifold with one-dimensional center bundle. We say that $f$ has \emph{quasi-isometric center foliation} if there exists an invariant center foliation $\mathcal{F}^c$ and constants $A\geq 1$ and $B>0$ such that for any curve $c$ inside a leaf of $\mathcal{F}^c$, we have that 
	$$
	A^{-1}\cdot L(c)-B\leq L(f^n(c))\leq A\cdot L(c)+B
	$$
	for any $n\in \N$, where $L(\cdot)$ denotes the length of a curve.
\end{defn}

The definition above is the usual one used in the literature. Compared with this definition, we propose an alternative way to define the central quasi-isometry where the collection of integral curves tangent to $E^c$ is not necessarily assembled to a foliation.

\begin{defn}\label{QIdef-curve}
	Let $f:M\rightarrow M$ be a $C^1$ partially hyperbolic diffeomorphism of a closed manifold with one-dimensional center bundle. We say that $f$ has \emph{quasi-isometric center bundle} if there exist constants $A\geq 1$ and $B>0$ such that for any $C^1$ curve $c$ tangent to $E^c$, we have that 
	$$
	A^{-1}\cdot L(c)-B\leq L(f^n(c))\leq A\cdot L(c)+B
	$$
	for any $n\in \N$, where $L(\cdot)$ denotes the length of a curve.
\end{defn}

We will discuss about the similarities and differences between these two notions of quasi-isometry in Section \ref{section-complete}, and moreover, present several equivalent statements. In the subsequent sections, we will refer the term "quasi-isometric center" as quasi-isometric center foliation defined by \defref{QIdef-leaf}.

\subsection{Skew product}

The skew product systems are a type of diffeomorphisms that fiber over a lower dimensional diffeomorphism. In the context of partially hyperbolic diffeomorphisms, the most natural way is to associate the center bundle to the tangent space of fibers. A toy example of skew product partially hyperbolic diffeomorphism is the map $(A, R_{\theta}): \T^2\times S^1\rightarrow \T^2\times S^1$ with tirvial product given by $(A, R_{\theta})(x, t)=(Ax, t+\theta)$, where $A$ is an Anosov automorphism and $R_{\theta}$ is a rotation. 

The usual skew product is defined in the following way. Let $N$ be a smooth manifold and $G$ be a compact Lie group. We say that a diffeomorphism $f: N\times G\rightarrow N\times G$ is a skew product if there exist diffeomorphisms $A: N\rightarrow N$ and $\theta: M\rightarrow G$ such that $f(x, g)=(Ax, \theta(x)g)$.

We will refer the skew products as a more general class of diffeomorphisms which does not necessarily require a trivial bundle or even a principle bundle. We say that a diffeomorphism $f: M\rightarrow M$ of a closed manifold is a \emph{skew product over a map} $A: N\rightarrow N$ if it preserves a fibration $\pi: M\rightarrow N$ and the projection map coincides with $A$. Note that the manifold $M$ and the bundle $\pi: M\rightarrow N$ might not be orientable. Furthermore, the restriction map on each fiber may not be an isometry. 

In dimension three, the manifold $N$ given in the skew product definition above is the torus $\T^2$ and the fibers are homeomorphic to $S^1$. One can easily construct a skew product partially hyperbolic diffeomorphism whose restriction map on the fibers is not an isometry by making a perturbation of the product map $A\times id: \T^2\times S^1\rightarrow \T^2\times S^1$ for an Anosov diffeomorphism $A$. 

A natural class of partially hyperbolic diffeomorphisms with quasi-isometric center of a closed 3-manifold is those with a uniformly compact center foliation. Indeed, every dynamically coherent skew product partially hyperbolic diffeomorphism in dimension three has a uniformly compact center foliation and thus has quasi-isometric center, see \propref{QI-skewproduct}.

\subsection{Discretized Anosov flow}

Let us first recall the concept of topological Anosov flows, which are topological generalizations of Anosov flows.

A \emph{topological Anosov flow} is a continuous flow $\phi_t: M\rightarrow M$ generated by a continuous non-singular vector field $X=\frac{\partial \phi_t}{\partial t}|_{t=0}$ satisfying that:
\begin{enumerate}
	\item The flow $\phi_t$ preserves two topologically transverse codimension-one continuous foliations $\mF^{ws}$ and $\mF^{wu}$;
	\item For any $x, y\in M$ in the same leaf of $\mF^{ws}$ (resp. $\mF^{wu}$), there exists a continuous increasing reparametrization $h: \R\rightarrow \R$ such that $d(\phi_t(x), \phi_{h(t)}(y))\rightarrow 0$ as $t\rightarrow +\infty$ (resp. $t\rightarrow -\infty$);
	\item There exists $\delta>0$ such that for any $x\in M$ and $y\in \mF^{ws}_{\delta}(x)$ (resp. $y\in \mF^{wu}_{\delta}(x)$), with $y$ not in the same orbit as $x$, and for any continuous increasing reparametrization $h: \R\rightarrow \R$, there exists $t\leq 0$ (resp. $t\geq 0$) such that $d(\phi_t(x), \phi_{h(t)}(y))>\delta$.
\end{enumerate}

Topological Anosov flows have been studied as non-singular pseudo-Anosov flows, see for instance \cite{Mosher92-1, Mosher92-2}, and a generalization of Anosov flows without dominated splitting, see also \cite{Shannon_flow}. 

We say that a non-singular flow $\phi_t: M\rightarrow M$ is \emph{expansive} if for every $\epsilon>0$ there exists $\delta>0$ such that for an increasing homeomorphism $h: \R\rightarrow \R$ with $h(0)=0$ and any points $x, y\in M$, if $d(\phi_t(x), \phi_{h(t)}(y))\leq \delta$ for all $t\in \R$, then $y=\phi_s(x)$ for some $|s|<\epsilon$. The expansiveness of a non-singular flow has the same nature as item (3) in the definition above of a topological Anosov flow. The purpose of constant $\epsilon$ is to prevent the self-accumulation of a flow orbit over a long time.

In light of previous work of Inaba and Matsumoto \cite{IM90}, and Paternain \cite{Paternain93}, the definition of topological Anosov flows can be given in an alternative way.

\begin{thm}\cite{IM90, Paternain93}\cite[Theorem 5.9]{BFP23collapsed}
	A non-singular continuous flow $\phi_t: M\rightarrow M$ is a topological Anosov flow if and only if it is expansive and preserves a foliation.
\end{thm}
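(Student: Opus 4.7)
The plan is to prove the two implications separately. The forward direction is nearly automatic from the definition: a topological Anosov flow preserves the codimension-one foliation $\mathcal{F}^{ws}$ by hypothesis, so only expansiveness requires an argument. Given $\epsilon > 0$, I would choose $\delta$ smaller than the constant appearing in item (3) of the topological Anosov definition, and also small enough that within a $\delta$-neighborhood of any point the pair $(\mathcal{F}^{ws}, \mathcal{F}^{wu})$ together with the flow direction admits a local product structure. If $d(\phi_t(x), \phi_{h(t)}(y)) \le \delta$ for all $t \in \mathbb{R}$ and some increasing reparametrization $h$ with $h(0)=0$, then the product structure at $t=0$ yields a point $z = \phi_s(x)$ with $|s| < \epsilon$ lying in $\mathcal{F}^{ws}_\delta(y) \cap \mathcal{F}^{wu}_\delta(y)$; applying items (2) and (3) in both time directions then forces $z = y$.

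The backward direction carries the substance of the theorem. Starting from an expansive flow $\phi_t$ preserving a continuous codimension-one foliation $\mathcal{F}$, I would designate $\mathcal{F}$ as the candidate weak stable foliation and establish asymptotic contraction along its leaves first. If $y$ lies in the leaf of $\mathcal{F}$ through $x$ and the forward $\phi_t$-orbit of $y$ failed to accompany the orbit of $x$ up to reparametrization, then along a sequence $t_n \to +\infty$ one could extract accumulation points yielding two distinct orbits that remain uniformly close under a suitable reparametrization, contradicting expansiveness. To construct $\mathcal{F}^{wu}$, I would mimic the Inaba--Matsumoto strategy: define the leaf of $\mathcal{F}^{wu}$ through $x$ as the set of points whose negative $\phi_t$-orbits accompany that of $x$ up to reparametrization, and use expansiveness together with $\mathcal{F}$ to promote this set-theoretic partition into a continuous foliation transverse to both the flow and $\mathcal{F}$.

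The main obstacle will be producing $\mathcal{F}^{wu}$ as a genuine continuous foliation rather than merely a partition. Here the key input is that expansiveness on a compact manifold gives uniform transverse control: starting from a small arc transverse to $\mathcal{F}$ at $x$, one shows that the subset of points whose negative orbits shadow the negative orbit of $x$ forms an embedded arc varying continuously with $x$, and that these arcs glue into a foliation. Verifying items (2) and (3) for the resulting $\mathcal{F}^{wu}$ then follows by applying expansiveness in reverse time with a uniform $\delta$ obtained from compactness of $M$. Throughout the argument, one must be careful that the single foliation hypothesis is strong enough to rule out the degenerate expansive behaviors (such as those appearing in suspensions of pseudo-Anosov maps with singularities) that would otherwise obstruct the clean topological Anosov conclusion; the existence of the flow-invariant codimension-one foliation $\mathcal{F}$ plays exactly this role.
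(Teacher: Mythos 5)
The paper does not actually prove this statement: it is imported verbatim from the literature (Inaba--Matsumoto, Paternain, and Theorem 5.9 of Barthelm\'e--Fenley--Potrie), so there is no internal proof to compare against. Your forward direction is the standard argument and is fine modulo routine care in extracting a local product structure from the topological transversality of $\mF^{ws}$ and $\mF^{wu}$.

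The backward direction, which you rightly identify as carrying the substance, has two genuine gaps. First, you designate the preserved foliation $\mF$ as the candidate weak stable foliation. The hypothesis only says that $\phi_t$ preserves \emph{some} foliation; it does not say that orbits are contained in leaves (a flow can permute the leaves of an invariant foliation, as a suspension flow does with its fiber foliation), and even when orbits do lie in leaves there is no a priori reason for a leaf to consist of forward-asymptotic orbits. Your contraction argument does not close this: two orbits in a common leaf that fail to be forward asymptotic need not yield a pair of distinct full orbits staying close for \emph{all} time, which is what expansiveness forbids; failure on a sequence $t_n\to+\infty$ says nothing about negative time. Second, and more seriously, the step where you upgrade the partition into stable/unstable sets to a pair of continuous transverse foliations is precisely the content of the structure theory of non-singular expansive flows on $3$-manifolds. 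That theory (the flow analogue of the Lewowicz--Hiraide picture, as developed by Inaba--Matsumoto and Paternain) produces two transverse invariant \emph{singular} foliations with finitely many $p$-prong singular orbits; the entire role of the preserved foliation $\mF$ in the theorem is to exclude those prongs, since a prong orbit is incompatible with a genuine flow-invariant codimension-one foliation in its neighborhood. Once the prongs are excluded the singular foliations are honest foliations and properties (2) and (3) of the definition follow. You relegate the prong issue to a cautionary remark at the end, whereas it is the pivot of the proof, and the ``uniform transverse control'' you invoke to make the unstable sets into continuously varying arcs is not a consequence of compactness alone---it is the hard theorem being cited.
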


We say that a partially hyperbolic diffeomorphism $f: M\rightarrow M$ of a closed manifold with one-dimensional center is a \emph{discretized Anosov flow} if there is a topological Anosov flow $\phi_t: M\rightarrow M$ and a continuous function $\tau: M\rightarrow \R$ such that $f(x)=\phi_{\tau(x)}(x)$ for any $x\in M$. Equivalently, a partially hyperbolic diffeomorphism $f: M\rightarrow M$ with one-dimensional center is a discretized Anosov flow if it admits an invariant center foliation $\mF^c$ such that $f(x)\in \mF^c_{L}(x)$ for any $x\in M$, where $L>0$ is a constant and $\mF^c_L(x)$ is the ball centered at $x$ of radius $L$ within the leaf $\mF^c(x)$. We refer the reader to \cite{BFFP1, BFP23collapsed} and \cite[Section 3.7]{Martinchich23} for more discussion on discretized Anosov flows and other equivalent definitions.

One can easily see that the time-one map of an Anosov flow is a discretized Anosov flow, as well as its perturbations. Note that discretized Anosov flow is a wider class than time-one maps of (topological) Anosov flows. This provides a class of partially hyperbolic diffeomorphisms with quasi-isometric center foliation \cite{Martinchich23}. But as we will see in Section \ref{section-complete}, not every discretized Anosov flow has quasi-isometric center bundle.


\section{Central unique integrability}\label{section-complete}

In this section, we explore two notions of central quasi-isometry introduced in Section \ref{section-pre} for partially hyperbolic diffeomorphisms with one-dimensional center. The definition provided in \defref{QIdef-leaf} assumes the existence of an invariant center foliation, while \defref{QIdef-curve} releases this assumption. As we will see, although the second definition does not require dynamical coherence, the central quasi-isometric property nonetheless indicates the integrability of center bundle.

\subsection{Two concepts of central quasi-isometry}

We first provide some equivalent statements with two definitions given by \defref{QIdef-leaf} and \defref{QIdef-curve}.
\begin{lem}\label{QIcurve=}
	Let $f: M\rightarrow M$ be a partially hyperbolic diffeomorphism of a closed manifold with one-dimensional center bundle. Then the following statements are equivalent:
	\begin{enumerate}
		\item $f$ has quasi-isometric center bundle;
		\item there are constants $C\geq 1$ and $D>0$ such that for any $C^1$ curve $c$ tangent to $E^c$, we have that $C^{-1}\cdot L(c)-D\leq L(f^n(c))\leq C\cdot L(c)+D$ for any $n\in\Z$;
		\item there exists constants $r, R>0$ such that for any $x\in M$ and any $C^1$ curve $c(x)$ through $x$ tangent to $E^c$ of length $L(c)\leq r$, $f^n(c)$ belongs to a center curve $c'(f^n(x))$ through $f^n(x)$ of length $L(c'(f^n(x)))\leq R$ for any $n\in Z$.
	\end{enumerate}
\end{lem}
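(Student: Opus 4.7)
The plan is to establish the cycle $(2)\Rightarrow(1)\Rightarrow(2)\Leftrightarrow(3)$. The implication $(2)\Rightarrow(1)$ is immediate since $\N\subset\Z$. The cornerstone underlying the remaining implications is the elementary observation that, because $E^c$ is $Df$-invariant and $f$ is a $C^1$ diffeomorphism, for any $C^1$ curve $c$ tangent to $E^c$ and any $k\in\Z$, the image $f^k(c)$ is again a $C^1$ curve tangent to $E^c$, passing through $f^k(x)$ for each $x\in c$. Thus the class of admissible test curves is invariant under the whole $\Z$-action.

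For $(1)\Rightarrow(2)$ I would upgrade the forward bounds to two-sided ones by pullback: for $n\geq 1$ and any center curve $c$, the preimage $\tilde c := f^{-n}(c)$ is again a $C^1$ center curve, and applying (1) to $\tilde c$ gives $A^{-1}L(\tilde c)-B\leq L(c)\leq A\,L(\tilde c)+B$; rearranging yields $A^{-1}(L(c)-B)\leq L(f^{-n}(c))\leq A\,L(c)+AB$, so (2) holds with $C=A$ and $D=AB$. The direction $(2)\Rightarrow(3)$ is then almost definitional: if $c$ is a center curve through $x$ of length at most $r$, then $f^n(c)$ is itself a $C^1$ center curve through $f^n(x)$, of length at most $Cr+D$ by (2), so one may take $c'(f^n(x)):=f^n(c)$ with $R:=Cr+D$.

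The substantive step is $(3)\Rightarrow(2)$, for which the key idea is a divide-and-sum procedure. Given any $C^1$ curve $c$ tangent to $E^c$, I would subdivide it by arclength into $N\leq L(c)/r+1$ consecutive sub-arcs of length at most $r$; applying (3) to each piece bounds the length of its $f^n$-image by $R$, so
\[
L(f^n(c))\leq NR \leq (R/r)\,L(c)+R,
\]
producing the upper half of (2). For the lower half I run the symmetric argument: subdivide the image $f^n(c)$ into $M\leq L(f^n(c))/r+1$ arcs of length at most $r$, and apply (3) with $-n$ in place of $n$ to each such arc; summing reconstructs $c$ and gives $L(c)\leq (R/r)\,L(f^n(c))+R$, which rearranges to $L(f^n(c))\geq (r/R)\,L(c)-r$. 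Thus (2) holds with $C=R/r$ and $D=\max(R,r)$.

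The only nontrivial ingredient is this subdivision argument, and the essential point I expect to be decisive is that (3) provides control for both positive and negative iterates; a one-sided version of (3) would not suffice to extract the lower bound in (2), which is precisely why the equivalence passes through the two-sided formulation rather than being extractable directly from (1) alone.
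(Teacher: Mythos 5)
Your proof is correct and follows essentially the same route as the paper's: the pullback trick $\tilde c=f^{-n}(c)$ to pass from $\N$ to $\Z$ with $C=A$, $D=AB$, and the subdivision-into-arcs-of-length-$\leq r$ argument for the equivalence with (3). The paper merely arranges the cycle through (1) rather than (2) and records only the resulting constants, so your write-up supplies exactly the details it leaves implicit.
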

\begin{proof}
	It is obvious that (2) implies (1) by choosing $A=C$ and $B=D$. The implication from (1) to (2) follows by the choice of constants $C=A$ and $D=AB$. Item (1) can be deduced from (3) by taking $A=R/r$ and $B=R$. Notice that for any $C^1$ curve $c$ everywhere tangent to $E^c$, the curves $f(c)$ and $f^{-1}(c)$ are also everywhere tangent to $E^c$ as $E^c$ is an invariant bundle. By taking $r=1$ and $R=A(1+B)$, one can see that (1) implies (3). Therefore, three items are all equivalent.
\end{proof}

\begin{lem}\label{QIleaf=}
	Let $f: M\rightarrow M$ be a partially hyperbolic diffeomorphism of a closed manifold with one-dimensional center bundle. Assume $\mF^c$ is an $f$-invariant center foliation. Then the following statements are equivalent:
	\begin{enumerate}
		\item $f$ has quasi-isometric center foliation $\mF^c$;
		\item there are constants $C\geq 1$ and $D>0$ such that for any curve $c$ inside a leaf of $\mF^c$, we have that $C^{-1}\cdot L(c)-D\leq L(f^n(c))\leq C\cdot L(c)+D$ for any $n\in\Z$;
		\item there exist constants $r, R>0$ such that for any $x\in M$, we have that $f^n(W^c_r(x))\subset W^c_R(f^n(x))$ for any $n\in Z$, where $W^c_r(x)$ denotes the segment of center leaf $\mF^c(x)$ centered at $x$ of radius $r$.
	\end{enumerate}
\end{lem}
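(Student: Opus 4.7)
The plan is to mirror the proof of \lemref{QIcurve=} essentially verbatim: the only substantive change is that the invariance of the bundle $E^c$ (used there to ensure $f^{\pm 1}(c)$ remains everywhere tangent to $E^c$) is replaced by the $f$-invariance of the foliation $\mF^c$, which ensures that if $c$ lies inside a leaf of $\mF^c$ then so does $f^{\pm 1}(c)$, possibly in a different leaf. I would establish the two equivalences $(1)\Leftrightarrow(2)$ and $(1)\Leftrightarrow(3)$.

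The implication $(2)\Rightarrow(1)$ is immediate with $A=C$ and $B=D$, simply restricting the range of $n$ from $\Z$ to $\N$. For $(1)\Rightarrow(2)$, the only content is to upgrade the inequality from $n\in\N$ to negative $n$: given $n<0$, the curve $c':=f^{n}(c)$ still lies inside a leaf of $\mF^c$ by invariance, and applying (1) to $c'$ with exponent $-n\in\N$ gives $L(c)=L(f^{-n}(c'))\le A\,L(c')+B$, which rearranges to $L(f^n(c))\ge A^{-1}L(c)-A^{-1}B$. The symmetric estimate yields the upper bound $L(f^n(c))\le A\,L(c)+AB$. Taking $C=A$ and $D=AB$ (and using $A\ge 1$) then produces (2).

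For $(1)\Rightarrow(3)$, I would fix any $r>0$ and set $R:=2Ar+B$. The segment $W^c_r(x)$ has length $2r$, so by (1) its iterate $f^n(W^c_r(x))$ has length at most $R$; since $\mF^c$ is $f$-invariant, this iterate is a connected sub-arc of $\mF^c(f^n(x))$ containing $f^n(x)$, hence contained in $W^c_R(f^n(x))$. For $(3)\Rightarrow(1)$, given a curve $c$ of length $\ell$ inside a center leaf, partition $c$ into at most $\lceil \ell/r\rceil$ subarcs of length at most $r$, each placed (via its midpoint $y_i$) inside $W^c_r(y_i)$. By (3) the image of each subarc lies in $W^c_R(f^n(y_i))$, which has length $2R$; summing yields $L(f^n(c))\le (2R/r)\ell+2R$. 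The matching lower bound follows from the same inequality applied to the inverse iterate, which is legitimate since (3) is symmetric in $n\in\Z$.

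I anticipate no genuine obstacle here: the argument is a direct foliated analogue of \lemref{QIcurve=}, and the only care needed is to track how the constants transform under inversion and composition, and to invoke $\mF^c$-invariance each time one asserts that an iterate of a sub-arc of a center leaf is again a sub-arc of a center leaf.
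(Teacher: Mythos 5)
Your proposal is correct and follows essentially the same route as the paper, which simply observes that the proof of \lemref{QIcurve=} carries over verbatim once the invariance of the bundle $E^c$ is replaced by the $f$-invariance of the foliation $\mF^c$. Your version merely spells out the constant-tracking (and the subdivision argument for $(3)\Rightarrow(1)$) in more detail than the paper does.
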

\begin{proof}
	The same argument in the proof of \lemref{QIcurve=} can be applied here by the fact that $\mF^c$ is an $f$-invariant foliation.
\end{proof}

Recall in \defref{QIdef-curve} that we do not require the existence of an invariant center foliation. However, the following result shows that the partially hyperbolic diffeomorphisms with quasi-isometric center bundle admit uniquel center foliations.

\begin{prop}\label{integrable}
	If $f: M\rightarrow M$ is a partially hyperbolic diffeomorphism with one-dimensional quasi-isometric center bundle, then $E^c$ is uniquely integrable. In particular, $f$ is dynamically coherent.
\end{prop}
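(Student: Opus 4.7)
The plan is to prove unique integrability of $E^c$ by contradiction, playing the quasi-isometric bound against the exponential transverse hyperbolicity of $E^s \oplus E^u$. Existence of local $C^1$ integral curves of $E^c$ is automatic: $E^c$ is a continuous one-dimensional distribution, so Peano's theorem supplies local solutions. The content is uniqueness. Once unique integrability is secured, an $f$-invariant center foliation $\mF^c$ exists, and dynamical coherence follows by saturating $\mF^c$ with the uniquely integrable strong foliations $\mF^s, \mF^u$ to obtain two-dimensional $f$-invariant foliations tangent to $E^s \oplus E^c$ and $E^c \oplus E^u$.

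Suppose toward a contradiction that two distinct $C^1$ integral curves $c_1, c_2$ of $E^c$ through some $x \in M$ exist. Extend each to a maximal integral curve and parametrize by arc length on $[0,T]$, taking $T$ large enough that $A^{-1}T - B$ is comfortably positive. By \lemref{QIcurve=}, for every $n \in \Z$ the images $f^n c_1, f^n c_2$ are $C^1$ integral curves of $E^c$ through $f^n(x)$ whose lengths lie in the fixed interval $[A^{-1}T - B,\, AT + B]$. In particular, for any $p \in c_1$ and any $q \in c_2$, the distance $d_M(f^n p,\, f^n q)$ is uniformly bounded in $n \in \Z$ by $2(AT + B)$.

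The heart of the argument is to extract from $c_1 \neq c_2$ a non-trivial transverse ($E^s$- or $E^u$-) component of the displacement between a suitable pair of points. For $t_0 > 0$ small and $p = c_1(t_0) \neq q = c_2(t_0)$, one reads off the transverse components of the $p \to q$ displacement using the uniquely integrable strong foliations: the $\mF^s$-holonomy distance and the $\mF^u$-holonomy distance between $p$ and $q$. At least one of these must be non-zero; otherwise $p$ and $q$ would lie on a common local center arc, contradicting that $c_1$ and $c_2$ are genuinely distinct integral curves through $x$. But a non-vanishing $\mF^s$-holonomy component inflates under backward iteration at a uniform exponential rate, and a non-vanishing $\mF^u$-holonomy component inflates under forward iteration, in either case contradicting the uniform bound of the previous paragraph.

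The main obstacle is making the "transverse displacement" precise without circular appeal to a center foliation. The remedy is to use only the strong foliations $\mF^s, \mF^u$ (unconditionally uniquely integrable) together with the transversality $T_pM = E^s \oplus E^c \oplus E^u$, to set up a local chart decomposing the small displacement $p \to q$ into an $\mF^s$-holonomy piece, an $\mF^u$-holonomy piece, and a residual piece along $E^c$. The $\mF^s$- and $\mF^u$-holonomies transform under $Df$ along the strong directions, yielding the required exponential estimates regardless of any center structure. A technical subtlety is that this $su$-product decomposition must be meaningful for the specific pair $(p,q)$; this is ensured by taking $t_0$ small so that $p$ and $q$ lie well within a uniform local product neighborhood of $x$.
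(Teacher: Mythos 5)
There is a genuine gap at the final step. Your contradiction is: the displacement from $p$ to $q$ has a non-zero $\mF^u$-holonomy component, this component inflates exponentially under forward iteration, and this contradicts the uniform bound $d_M(f^np,f^nq)\leq 2(AT+B)$. But what inflates exponentially is the \emph{intrinsic} distance inside an unstable leaf, and unstable leaves are not quasi-isometrically embedded in $M$: an unstable arc of enormous length can perfectly well have endpoints at bounded ambient distance, so no contradiction follows. Nor can you confine the orbit of the pair $(p,q)$ to a uniform local product box where the $u$-component and the ambient distance would be comparable, since your bound $2(AT+B)\geq 2B$ cannot be made small ($B$ is a fixed constant of the quasi-isometry). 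There is a second, related problem: for the holonomy components to obey an exponential growth estimate under $Df$, the decomposition must be $f$-equivariant, which requires an invariant plaque family tangent to $E^s\oplus E^c$ along which to project onto the unstable direction --- and that is precisely the dynamical coherence being proven. Choosing non-canonical local integral curves of $E^c$ to build the chart does not repair this, because those curves are not preserved by $f$, so the ``components'' at time $n$ are not the image of the components at time $0$.

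The paper closes exactly this gap with a confinement-plus-transversality argument rather than a distance comparison. It normalizes so that $c_1,c_2$ lie in a common $cu$-plaque with far endpoints $y_1,y_2$ on a common local unstable leaf $W^u_{\delta}(y_1)$, and considers the whole family of center curves $c_z$ from $x$ to points $z$ in the connected set $D\subset W^u_{\delta}(y_1)$ reachable by center curves, arranged so that the interiors of the $c_z$ miss $W^u_{\delta}(y_1)$. Quasi-isometry traps every $f^n(c_z)$ in a $cu$-ball $B^{cu}_R(f^n(x))$ of \emph{fixed} radius $R$, while $f^n(W^u_{\delta}(y_1))$ has length tending to infinity yet still joins points of that bounded region; transversality of $E^c$ and $E^u$ then forces this long unstable arc to cross the interior of some $f^n(c_z)$, contradicting the ($f$-invariant) disjointness. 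To salvage your outline you would need some such mechanism converting ``arbitrarily long unstable arc confined near a bounded center-saturated set'' into an actual transverse intersection; exponential growth of the unstable length by itself proves nothing.
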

\begin{proof}
	Suppose to the contrary that $E^c$ is not uniquely integrable. Then, there exist at least two curves tangent to $E^c$, denoted by $c_1$ and $c_2$, through a common point $x\in M$. Without loss of generality, we can assume that $c_1$ and $c_2$ are contained in the same $cu$-plaque through $x$. Indeed, if they are contained in the same $cs$-plaque, then we can apply our argument by considering $f^{-1}$. Otherwise, the $cu$-plaque through $c_1$ intersects the $cs$-plaque through $c_2$ in a curve tangent to $E^c$ that is distinct from $c_1$, and then we replace $c_2$ by this center curve. 
	
	Up to taking sub-curves, we assume that $c_1$ and $c_2$ are two short center curves with the same endpoint $x$ whose other endpoints $y_1\in c_1$ and $y_2\in c_2$ satisfy that $y_2\in W^u_{\delta}(y_1)$ for a small $\delta>0$. By the continuity of $E^c$, as $\delta$ can be chosen small enough, the intersection of center curves through $x$ with $W^u_{\delta}(y_1)$ is connected, denoted by $D\subset W^u_{\delta}(y_1)$. Denote by $c_z$ the center curve between $x$ and any point $z\in D$. Without loss of generality, we can assume that for any $z\in D$, the interior of $c_z$ is disjoint with $W^u_{\delta}(y_1)$. By the uniform expansion along unstable manifolds, for any $K>0$, there is an integer $N\in\N$ such that for any $n\geq N$, $f^n(y_2)$ is contained in $f^n(W^u_{\delta}(y_1))\backslash W^u_K(f^n(y_1))$. In particular, the center curve $f^n(c_2)$ is disjoint with $W^u_K(f^n(y_1))$ and intersects $f^n(W^u_{\delta}(y_1))$ in the unique point $f^n(y_2)$. As $f$ has quasi-isometric center bundle, there exists a uniform constant $R>0$ such that the length of $f^n(c_z)$ is bounded by $R$ for any $n\in \N$ and any $z\in D$. It turns out that the center curve $f^n(c_z)$ is entirely contained in the $cu$-ball $B^{cu}_R(f^n(x))$ centered at $f^n(x)$ of radius $R$ with respect to the induced metric for any $n\in \N$. Then, the curve $f^n(D)$ and, in particular, $f^n(c_2)$ are entirely contained in $B^{cu}_R(f^n(x))$ for any $n\in\N$. Taking $K$ going to infinity, by the transversality of $E^c$ and $E^u$, we can find a large enough $n\in \N$ such that $f^n(W^u_{\delta}(y_1))$ intersects the interior of $f^n(c_z)$ for some $z\in D$. This is a contradiction.
	
\end{proof}

\begin{rmk}
	We can actually deduce from \propref{integrable} that the quasi-isometric center bundle given by \defref{QIdef-curve} implies the existence of an invariant center foliation, which is actually unique. 
\end{rmk}

Let us revisit the two models of partially hyperbolic diffeomorphisms given in Section \ref{section-pre}. Notice that any partially hyperbolic diffeomorphism with a uniformly compact center foliation admits quasi-isometric center foliation. By this fact, as a consequence of \cite{DeMartinchich20}, we can obtain the quasi-isometric property for skew products.
\begin{prop}\label{QI-skewproduct}
	Let $f: M\rightarrow M$ be a skew product partially hyperbolic diffeomorphism of a closed manifold with $dimE^c=1$ and $dimE^u=1$. If $f$ is dynamically coherent, then it has quasi-isometric center foliation. If the center bundle $E^c$ is uniquely integrable, then $f$ has quasi-isometric center bundle.
\end{prop}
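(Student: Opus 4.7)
The plan is to show that in a skew product partially hyperbolic diffeomorphism with $\dim E^c = 1$, the center bundle $E^c$ coincides with the fiber direction $\ker D\pi$, and then to exploit the fact that the fibers are compact with uniformly bounded length. The kernel $K = \ker D\pi$ is a $Df$-invariant subbundle, since $f$ permutes fibers; in the dimension-three setting of the paper (where $\dim E^u = \dim E^c = 1$ forces $\dim N = 2$ and one-dimensional fibers), $K$ is a line bundle. Because the fibers are compact with uniformly bounded length $d \leq \ell(F_x) \leq D$, the growth $\|Df^n|_K\|$ is comparable to the ratio $\ell(F_{A^n(x)})/\ell(F_x)$ and thus uniformly bounded away from $0$ and $\infty$. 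By the dynamical characterization of the hyperbolic subbundles, $K$ cannot lie in $E^s \oplus E^u$, so a dimension count forces $K = E^c$.

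For the first conclusion, dynamical coherence provides an $f$-invariant center foliation $\mF^c$ tangent to $E^c = K$, so each leaf sits inside a single fiber, and by connectedness each leaf equals a fiber. Thus $\mF^c$ is uniformly compact. Given a center curve $c \subset F_x$, I would use a winding-number argument on the circle fibers: writing $L(c) = k\,\ell(F_x) + \delta$ with $0 \leq \delta < \ell(F_x)$ and lifting $f^n|_{F_x}$ to a monotone homeomorphism $\widetilde{f^n} \colon \R \to \R$ equivariant for the shifts $\ell(F_x)$ and $\ell(F_{A^n(x)})$, a direct computation yields
\[
\Bigl|\,L(f^n(c)) - \tfrac{\ell(F_{A^n(x)})}{\ell(F_x)}\, L(c)\,\Bigr| \leq 2D,
\]
so $L(f^n(c))$ obeys the quasi-isometric bound with $A = D/d$ and $B = 2D$, uniformly in $n$. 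This estimate is essentially the content of \cite{DeMartinchich20} specialized to uniformly compact center foliations, which may be cited in place of the direct computation.

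For the second conclusion, unique integrability of $E^c$ forces every $C^1$ curve tangent to $E^c$ to be contained in a single leaf of $\mF^c$, hence in a single fiber. The winding-number bound above then applies verbatim to arbitrary $C^1$ tangent curves, delivering quasi-isometric center bundle in the sense of \defref{QIdef-curve}. The main obstacle is the identification $E^c = \ker D\pi$: the delicate point is to rule out that the fiber direction absorbs part of $E^s$ or $E^u$, which is achieved via the dynamical characterization of the hyperbolic subbundles combined with the uniform boundedness of fiber lengths. Once this identification is in hand, the remaining steps reduce to careful bookkeeping on families of circle diffeomorphisms with equivariant lifts.
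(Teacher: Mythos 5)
Your overall route is essentially the paper's: the paper proves this proposition by observing that a uniformly compact center foliation trivially has the quasi-isometric property and by invoking \cite{DeMartinchich20} for the fact that the center foliation of a dynamically coherent skew product is the (uniformly compact) foliation by fibers. You reprove that identification directly instead of citing it, and your final winding-number estimate, giving $A=D/d$ and $B=2D$, is correct once the center leaves are known to be the circle fibers; the reduction of the second statement to the first via unique integrability is also fine.

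There is, however, one step whose justification is wrong as written. The uniform bound $d\le \ell(F_x)\le D$ on fiber lengths controls only the integrated quantity $\int_{F_x}\|Df^n|_K\|\,ds=\ell(F_{A^n(x)})$, not the pointwise norm $\|Df^n|_K(y)\|$: the fiber maps are circle diffeomorphisms between circles of bounded length, and such maps can have derivatives growing exponentially at individual points (e.g.\ a fiber map with a hyperbolic fixed point, which is compatible with partial hyperbolicity as soon as the base dynamics is sufficiently strongly hyperbolic). So the assertion that $\|Df^n|_K\|$ is uniformly bounded away from $0$ and $\infty$ is false in general, and it is exactly what you lean on to exclude an $E^s\oplus E^u$ component of $K$. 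The conclusion $K=E^c$ survives if you run the argument in integrated form: if $K_y$ had a nonzero $E^u$-component at some $y$, then by continuity of $K$ and of the splitting the angle would be bounded below on an open set $U$, whence $\ell(F_{A^n(x)})\ge\int_{F_x\cap U}\|Df^n|_K\|\,ds\ge c\,\lambda^n\meas(F_x\cap U)\to\infty$ for any fiber meeting $U$, contradicting $\ell\le D$; the same applied to $f^{-1}$ rules out an $E^s$-component, and the dimension count gives $K=E^c$. With that repair (and the implicit assumption, valid in the paper's three-dimensional setting, that the fibers are circles), the remaining steps — each leaf of $\mF^c$ is open in, hence equal to, a fiber, and the quasi-isometric bounds follow — go through.
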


The result below follows directly from \cite[Proposition G.2]{BFFP1}, where it shows that every discretized Anosov flow is dynamically coherent.
\begin{prop}\label{QI-DAF}
	Let $f: M\rightarrow M$ be a discretized Anosov flow of a closed manifold. Then, it has a unique quasi-isometric center foliation. If $E^c$ is uniquely integrable, then $f$ has quasi-isometric center bundle.
\end{prop}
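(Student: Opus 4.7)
The plan is to reduce everything to the one-dimensional geometry of the flow orbits. First I would invoke the cited result \cite[Proposition G.2]{BFFP1}, which already guarantees that every discretized Anosov flow $f=\phi_\tau$ is dynamically coherent and that the flow orbits form the unique invariant foliation tangent to $E^c$; this takes care of existence and uniqueness of the candidate center foliation $\mF^c$. That same result allows one to assume, after possibly replacing $\phi_t$ by $\phi_{-t}$, that $\tau>0$, so by compactness of $M$ there are constants $0<\epsilon\le \tau(x)\le T_0$. Similarly, the continuity and non-vanishing of the generating vector field $X$ give uniform bounds $0<a\le \|X(x)\|\le b$.

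The heart of the argument is then the observation that in the flow-time parametrization each center leaf becomes $\R$ (or $S^1$, in which case quasi-isometry is trivial), and $f$ restricts to the orientation-preserving homeomorphism $F(s)=s+\tau(\phi_s(p))$ satisfying $F(s)-s\in[\epsilon,T_0]$. The key elementary fact I would exploit is monotonicity combined with the semigroup identity $F^n\circ F^k=F^k\circ F^n$: fixing $s_1\le s_2$ and choosing the unique integer $k\ge 0$ with $F^k(s_1)\le s_2<F^{k+1}(s_1)$, the monotonicity of $F^n$ yields $F^k(F^n(s_1))\le F^n(s_2)<F^{k+1}(F^n(s_1))$ as well. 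Hence both $s_2-s_1$ and $F^n(s_2)-F^n(s_1)$ lie in the interval $[k\epsilon,(k+1)T_0]$, so they are comparable up to a multiplicative constant $T_0/\epsilon$ and an additive constant $T_0$. Converting flow time to arc length using the bounds $a\le\|X\|\le b$ only alters the constants by a bounded factor, giving the required estimate for $n\ge 0$; the case $n\le 0$ follows because $f^{-1}$ is again a discretized Anosov flow. An appeal to \lemref{QIleaf=} then produces the formulation of \defref{QIdef-leaf}.

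For the second assertion I would simply note that once $E^c$ is assumed uniquely integrable, any $C^1$ curve tangent to $E^c$ is forced to lie inside a center leaf, i.e.\ inside a flow orbit. The length estimate from the first part thus applies verbatim to arbitrary $C^1$ center curves, and \lemref{QIcurve=} then yields the quasi-isometric center bundle property of \defref{QIdef-curve}. The main point where care is required --- though it is essentially the only one --- is to keep the quasi-isometry constants independent of the chosen orbit; this uniformity is automatic from the compactness of $M$ and the global bounds on $\tau$ and $\|X\|$, so no further estimates are needed.
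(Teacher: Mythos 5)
Your proposal is correct, and it is in fact more informative than what the paper provides: the paper offers no argument for this proposition beyond the citations to \cite[Proposition G.2]{BFFP1} (for dynamical coherence and the identification of $\mF^c$ with the orbit foliation) and to \cite{Martinchich23} (for the quasi-isometric property itself), whereas you supply the quantitative estimate explicitly. Your starting point is the same as the paper's --- Proposition G.2 giving the unique invariant center foliation --- but you then reduce the length estimate to the one-dimensional combinatorics of the induced map $F(s)=s+\tau(\phi_s(p))$ on the flow-time line, and the monotonicity-plus-commutation argument (choosing $k$ with $F^k(s_1)\le s_2<F^{k+1}(s_1)$ and comparing both $s_2-s_1$ and $F^n(s_2)-F^n(s_1)$ to the interval $[k\epsilon,(k+1)T_0]$) is sound and yields uniform constants $A=bT_0/(a\epsilon)$, $B=bT_0$ after converting flow time to arc length. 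Two points deserve an explicit word rather than a passing mention: (i) the reduction to $\tau>0$ of constant sign is itself a nontrivial structural fact about discretized Anosov flows (it does not follow from the bare definition, where $\tau$ could a priori vanish or change sign), so it must genuinely be charged to \cite{Martinchich23} or \cite{BFFP1}; and (ii) the monotonicity of $F$ should be justified --- it follows because $F$ is a continuous injection of $\R$ with $0<F(s)-s\le T_0$, which forces $F$ to be increasing. With those attributions made, your argument, including the treatment of circle leaves by lifting the flow-time parametrization and the deduction of the second assertion from unique integrability via \lemref{QIcurve=}, is a complete and correct proof of the statement.
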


We should point out that, following directly from the definitions and \propref{integrable}, any partially hyperbolic diffeomorphism with one-dimensional quasi-isometric center bundle satisfies quasi-isometric center foliation which is actually unique. In other words, the central quasi-isometry given in \defref{QIdef-curve} is more restrictive than the one given in \defref{QIdef-leaf}. However, the inclusion in the reverse direction is false. Indeed, one can deduce from \propref{integrable} that any partially hyperbolic diffeomorphism with quasi-isometric center foliation whose center bundle $E^c$ is not uniquely integrable cannot admit quasi-isometric center bundle. Such diffeomorphisms actually exist even in dimension three, see \cite{2016example} for examples on $\T^3$ with uniformly compact center foliation and \cite{Martinchich23} for examples of discretized Anosov flows.

As stated above, we present the relation between two definitions of central quasi-isometries given by \defref{QIdef-leaf} and \defref{QIdef-curve} as below.
\begin{prop}\label{2QI}
	The set of partially hyperbolic diffeomorphisms with one-dimensional quasi-isometric center bundles belongs properly to the set of those with quasi-isometric center foliations.
\end{prop}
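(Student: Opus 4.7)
The plan is to prove the two halves of the proposition separately. For the containment, I would argue as follows. Suppose $f$ has quasi-isometric center bundle in the sense of \defref{QIdef-curve}. By \propref{integrable}, $E^c$ is uniquely integrable, so there exists a unique $f$-invariant center foliation $\mF^c$. Since every curve contained in a leaf of $\mF^c$ is in particular a $C^1$ curve tangent to $E^c$, the length bounds of \defref{QIdef-curve} apply verbatim to curves inside leaves of $\mF^c$. This gives \defref{QIdef-leaf} with the same constants $A$ and $B$, so $f$ admits quasi-isometric center foliation.

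For the properness, I would produce a partially hyperbolic diffeomorphism $f$ which has quasi-isometric center foliation but whose center bundle $E^c$ fails to be uniquely integrable. Given such an $f$, \propref{integrable} immediately forbids $f$ from having quasi-isometric center bundle, because quasi-isometric center bundle would force unique integrability of $E^c$. The key step is therefore exhibiting such examples. I would invoke the examples on $\mathbb{T}^3$ from \cite{2016example}, where a partially hyperbolic diffeomorphism is constructed with a uniformly compact center foliation (which automatically gives quasi-isometric center foliation) but with $E^c$ not uniquely integrable; and the examples of discretized Anosov flows from \cite{Martinchich23}, which by \propref{QI-DAF} have quasi-isometric center foliation, yet whose center bundle is not uniquely integrable.

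The only nontrivial point here is verifying that a uniformly compact center foliation $\mF^c$ always implies the quasi-isometric bound in \defref{QIdef-leaf}. This is standard and short: since the leaves of $\mF^c$ are compact and depend continuously on the base point in a locally uniform way, the leafwise diameters are uniformly bounded above by some constant $R$, and any curve $c$ inside a leaf satisfies $L(f^n(c)) \leq R$ for all $n$, which fits \defref{QIdef-leaf} with $A = 1$ and $B = R$. With this observation in hand, both halves of the proposition are completed, and the inclusion is strict.

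I do not anticipate a serious obstacle; the proof is essentially a bookkeeping argument combining \propref{integrable} with the cited examples, the only subtle point being the conceptual one that \defref{QIdef-curve} secretly includes an integrability hypothesis that \defref{QIdef-leaf} does not.
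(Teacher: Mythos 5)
Your proposal is correct and follows essentially the same route as the paper: the inclusion is obtained by combining \propref{integrable} with the observation that leafwise curves are tangent to $E^c$, and properness is witnessed by the non-uniquely-integrable examples of \cite{2016example} and \cite{Martinchich23}, exactly as in the paragraph preceding the proposition. No further comment is needed.
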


\subsection{Complete invariant foliations}
Denote by $\mF^c$ an invariant center foliation such that the partially hyperbolic diffeomorphism acts quasi-isometrically at $\mF^c$ as stated in \defref{QIdef-leaf}, which is the unique invariant foliation when the diffeomorphism has quasi-isometric center bundle. There are invariant foliations tangent to $E^s\oplus E^c$ and $E^c\oplus E^u$, denoted by $\mF^{cs}$ and $\mF^{cu}$, respectively, called the center-stable and center-unstable foliations. Denoted by $\mF^{cs}(x)$ (resp. $\mF^{cu}(x)$) the center-stable (resp. center-unstable) leaf through $x\in M$. We will denote by $W^{\sigma}(x)$ the corresponding leaf through the point $x\in M$ and $W^{\sigma}(U)\colon=\bigcup_{y\in U}W^{\sigma}(y)$ for any subset $U\subset M$ and $\sigma=s, c, u$.

\begin{defn}
	We say that the foliation $\mF^{cs}$ is \emph{complete} if for any point $x\in M$, we have that $\mF^{cs}(x)=W^s(W^c(x))$. 
\end{defn}

A priori, the definition above only tells us that each center stable leaf is an $s$-saturated set of any center leaf. In fact, the completeness of $\mF^{cs}$ implies that each center stable leaf is bifoliated by center and stable leaves.

\begin{lem}\cite[Lemma 3.3]{FU1}
	The foliation $\mF^{cs}$ is complete if and only if $\mF^{cs}(x)=W^s(W^c(x))=W^c(W^s(x))$ for any point $x\in M$.
\end{lem}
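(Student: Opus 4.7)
The ``if'' direction is a tautology: the assumed identity $\mathcal{F}^{cs}(x)=W^s(W^c(x))=W^c(W^s(x))$ contains, as its first equality, exactly the condition in the definition of completeness. So the content of the lemma is the reverse implication, and I would organize the argument as follows.

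Assume that $\mathcal{F}^{cs}$ is complete, i.e.\ $\mathcal{F}^{cs}(x)=W^s(W^c(x))$ for every $x\in M$. The inclusion $W^c(W^s(x))\subseteq \mathcal{F}^{cs}(x)$ is free from any completeness hypothesis: if $w\in W^s(x)$ then $w\in \mathcal{F}^{cs}(x)$, hence $W^c(w)\subseteq \mathcal{F}^{cs}(w)=\mathcal{F}^{cs}(x)$, and taking a union over $w\in W^s(x)$ gives the claim. So the only thing left to check is the opposite inclusion $\mathcal{F}^{cs}(x)\subseteq W^c(W^s(x))$.

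For this I would apply completeness not at $x$ but at an arbitrary target point $y\in\mathcal{F}^{cs}(x)$. Since $y\in\mathcal{F}^{cs}(x)=\mathcal{F}^{cs}(y)$, completeness at $y$ yields $\mathcal{F}^{cs}(y)=W^s(W^c(y))$. In particular $x\in\mathcal{F}^{cs}(y)=W^s(W^c(y))$, so there is a point $w\in W^c(y)$ with $x\in W^s(w)$; equivalently, $w\in W^s(x)$. From $w\in W^c(y)$ we also read $y\in W^c(w)$, and therefore $y\in W^c(W^s(x))$. Since $y$ was arbitrary in $\mathcal{F}^{cs}(x)$, this gives $\mathcal{F}^{cs}(x)\subseteq W^c(W^s(x))$, completing the proof.

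There is essentially no obstacle here: the argument is just the symmetric reformulation that leveraging completeness at the far endpoint (rather than at $x$) lets us swap the order in which stable and center leaves are traversed. The proof uses only that $W^s$-leaves and $W^c$-leaves are well defined and saturate inside the center-stable leaf they lie in, and that the relation ``lies in the same center-stable leaf'' is an equivalence relation, so that $\mathcal{F}^{cs}(x)=\mathcal{F}^{cs}(y)$ whenever $y\in\mathcal{F}^{cs}(x)$.
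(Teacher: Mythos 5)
Your proof is correct. Note that the paper does not actually prove this lemma—it is quoted from \cite[Lemma 3.3]{FU1} without argument—so there is no in-paper proof to compare against; your argument (the ``if'' direction being definitional, the easy inclusion $W^c(W^s(x))\subseteq\mF^{cs}(x)$ following from the $s$- and $c$-saturation of center-stable leaves, and the reverse inclusion obtained by applying completeness at the target point $y$ and using the symmetry of the leaf relations) is the natural and standard one for this statement.
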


The lemma below follows directly from the continuity of center bundle $E^c$ and center foliation.
\begin{lem}\label{continuity}
	Let $\mF^c$ be a center foliation tangent to $E^c$. For any $K>0$ and any $\epsilon>0$, there is $\delta=\delta(K, \epsilon)>0$ such that for any center curve $\gamma$ in a leaf of $\mF^c$ of length less than $K$ and any point $z\in W^s_{\delta}(\gamma)$, the connected center curve $ c(z)\subset W^s_{\epsilon}(\gamma)\cap \mF^c(z)$ through point $z$ has non-empty intersection with the local stable manifold $W^s_{\epsilon}(w)$ for any $w\in \gamma$. The analogous statement is true if we replace stable manifolds with unstable ones.
\end{lem}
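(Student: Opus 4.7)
The plan is to exploit the local product structure of the center-stable foliation $\mF^{cs}$ and propagate a sliding along $\gamma$ by stable holonomy in finitely many uniformly small steps. The main input is compactness of $M$, which furnishes a uniform scale $\eta > 0$ and a modulus of continuity $\omega : [0,\eta] \to [0,\infty)$ with $\omega(t) \to 0$ as $t \to 0^+$ such that, at every point $w \in M$ and every $z \in W^s_\eta(w)$, the arc $W^c_\eta(z)$ meets $W^s_\eta(w')$ in a unique transverse point for each $w' \in W^c_\eta(w)$, and this intersection lies in $W^s_{\omega(t)}(w')$ whenever $z \in W^s_t(w)$. This is the standard local coordinate chart inside the cs-leaf through $w$, and its uniformity across $M$ follows from continuity of the strong foliations together with compactness of the ambient manifold.

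Given $\gamma \subset W^c(w_0)$ of length at most $K$, I would partition $\gamma$ by successive points $w_0, w_1, \ldots, w_N$ of mutual center distance at most $\eta/2$, so that $N \leq 2K/\eta + 1$ depends only on $K$. For $\delta < \eta$ to be fixed below and $z \in W^s_\delta(\gamma)$ lying in $W^s_\delta(w_{i_0})$ for some $i_0$, I would then define inductively
\[
z_{i+1} := W^c_\eta(z_i) \cap W^s_\eta(w_{i+1}) \quad \text{for } i \geq i_0,
\]
and analogously to the left. Each $z_i$ lies on $W^c(z)$ by uniqueness of the local center leaf in the product chart, and iterating the modulus gives stable distance at most $\omega^{|i - i_0|}(\delta)$ from $w_i$. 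The concatenation of the short center arcs $[z_i, z_{i+1}]$ is then a connected center curve $\wt c \subset W^c(z)$ through $z$ which stays within stable distance $\omega^N(\delta)$ of $\gamma$; a further application of local product in each box shows that $\wt c$ meets $W^s_{\omega^{N+1}(\delta)}(w)$ for every intermediate $w \in \gamma$.

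To finish, I would choose $\delta = \delta(K, \epsilon) > 0$ small enough that $\omega^{N+1}(\delta) < \epsilon$, which is possible because $N$ depends only on $K$ and $\omega$ is continuous at $0$. Then $\wt c \subset W^s_\epsilon(\gamma) \cap W^c(z)$ is connected and contains $z$, so it is contained in the connected curve $c(z)$ of the statement; consequently $c(z)$ meets every $W^s_\epsilon(w)$ for $w \in \gamma$. The analogous claim for the unstable side follows verbatim by working in $\mF^{cu}$ with $W^u$ in place of $W^s$. The principal technical point is that the modulus $\omega$ and its $N$-fold iterate must be controlled uniformly over $M$, which reduces to continuity of the strong and center foliations on a compact manifold and is where compactness of $\gamma$ (through the finite bound on $N$) is essential.
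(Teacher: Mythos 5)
Your argument is correct and supplies the standard chain-of-product-boxes proof; the paper itself gives no proof of this lemma, merely asserting that it follows from continuity of $E^c$ and of the center foliation, and your uniform scale $\eta$, modulus $\omega$, and bound $N\leq 2K/\eta+1$ are exactly the quantitative content behind that assertion. The only details worth making explicit are that $\omega$ should be replaced by a monotone upper envelope so that the iterates $\omega^{j}(\delta)$ are well defined, stay below $\eta$, and tend to $0$ with $\delta$, and that the point $w\in\gamma$ with $z\in W^s_\delta(w)$ need not be one of your partition points, which is handled by simply starting the partition of $\gamma$ at $w$.
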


The following result states that the two types of central quasi-isometry given in \defref{QIdef-leaf} and \defref{QIdef-curve} have the same foliated information in each leaf of center stable and center unstable foliations as long as the invariant center foliation with quasi-isometric property is provided. It does not matter in following proposition whether the invariant center bundle $E^c$ is uniquely integrable or not.
\begin{prop}\label{complete}
	If $f: M\rightarrow M$ is a partially hyperbolic diffeomorphism with one-dimensional quasi-isometric center bundle or quasi-isometric center foliation, then the invariant foliations $\mF^{cs}$ and $\mF^{cu}$ are complete. Moreover, the lifted foliations $\wt{\mF}^{cs}$ and $\wt{\mF}^{cu}$ are also complete.
\end{prop}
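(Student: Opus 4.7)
The strategy is to establish, for every $x\in M$, the equality $\mathcal{F}^{cs}(x) = W^s(W^c(x))$; the inclusion $W^s(W^c(x))\subset \mathcal{F}^{cs}(x)$ is automatic, so the task is the reverse inclusion. The completeness of $\mathcal{F}^{cu}$ will follow by the symmetric argument applied to $f^{-1}$, whose action on $\mathcal{F}^c$ is still quasi-isometric by \lemref{QIleaf=}(2). The argument also lifts verbatim to $\wt M$, since both the lengths of center curves and the uniform estimate in \lemref{continuity} are preserved by the covering map $\pi:\wt M\to M$, which is a local isometry.

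The core step is the following local-to-global exchange lemma: if $x'\in W^s(x)$ and $z\in W^c(x')$, then there exists $z'\in W^c(x)$ with $z\in W^s(z')$. To prove it, let $L$ be the length of the center segment $\gamma$ from $x'$ to $z$ inside $W^c(x')$. By quasi-isometry the iterated segment $\gamma_n:=f^n(\gamma)$ has length at most $AL+B$ for every $n\ge 0$, while $d^s(f^n(x),f^n(x'))\to 0$ exponentially. Fix a small $\epsilon>0$, set $K:=AL+B+1$, and let $\delta=\delta(K,\epsilon)$ be the constant from \lemref{continuity}. For $n$ sufficiently large, $f^n(x)\in W^s_{\delta}(f^n(x'))\subset W^s_{\delta}(\gamma_n)$, so \lemref{continuity} applied to $\gamma_n$ and the point $f^n(x)$ produces a center curve through $f^n(x)$ contained in $W^s_\epsilon(\gamma_n)$ that meets $W^s_\epsilon(f^n(z))$ at some point $w_n\in W^c(f^n(x))$. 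Pulling back, $z':=f^{-n}(w_n)$ lies in $W^c(x)$ by $f$-invariance of $\mathcal{F}^c$, and satisfies $z\in W^s(z')$.

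With this exchange lemma in hand, completeness follows by path induction. Any $y\in\mathcal{F}^{cs}(x)$ admits a plaque-chain $x=x_0,x_1,\dots,x_n=y$ where each $x_{i+1}$ lies on either $W^c(x_i)$ or $W^s(x_i)$. I claim by induction that each $x_i$ lies in $W^s(W^c(x))$; say $x_i\in W^s(z_i)$ with $z_i\in W^c(x)$. The case $x_{i+1}\in W^s(x_i)$ is immediate, since $W^s(x_{i+1})=W^s(x_i)=W^s(z_i)$. For $x_{i+1}\in W^c(x_i)$, apply the exchange lemma to the triple $(z_i,x_i,x_{i+1})$ to obtain $z_{i+1}\in W^c(z_i)=W^c(x)$ with $x_{i+1}\in W^s(z_{i+1})$. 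Iterating yields $y=x_n\in W^s(W^c(x))$.

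The main obstacle is the exchange lemma itself: the center segment from $x'$ to $z$ can be arbitrarily long, so a direct appeal to local product structure in a single foliation chart is inadequate. This is overcome precisely by the quasi-isometric hypothesis, which keeps the iterated segment $\gamma_n$ uniformly bounded in length while the stable distance between $f^n(x)$ and $f^n(x')$ contracts; this allows the uniform continuity statement of \lemref{continuity} to take effect and transport $f^n(z)$ along a short stable arc onto $W^c(f^n(x))$, after which $f$-invariance of $\mathcal{F}^c$ returns the resulting point to $W^c(x)$.
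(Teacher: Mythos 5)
Your proof is correct, and its analytic core is exactly the paper's: iterate forward so that the quasi-isometry bound keeps the center segment uniformly short while the stable distance between the two base points contracts below the threshold $\delta(K,\epsilon)$ of \lemref{continuity}, then transport along the short center curve and pull back by $f^{-n}$. Where you differ is in the global scaffolding. The paper argues by contradiction: following \cite{BW05}, it selects a ``first bad point'' $y\in\mF^{cs}(x)\setminus W^s(W^c(x))$ reached by a center arc $\gamma$ with $\gamma([0,1))\subset W^s(W^c(x))$, and derives a contradiction with the disjointness of $W^s(y)$ and $W^c(x)$. You instead prove directly the exchange inclusion $W^c(W^s(x))\subset W^s(W^c(x))$ and then propagate it along a plaque chain inside the leaf. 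Your version is self-contained (it does not invoke the boundary-point construction of \cite{BW05}) and delivers, as a byproduct, the bifoliated form of completeness $\mF^{cs}(x)=W^s(W^c(x))=W^c(W^s(x))$, which the paper obtains separately by citing \cite[Lemma 3.3]{FU1}; the paper's contradiction argument is shorter on the combinatorial side because it needs only one application of the transport step rather than an induction. Both treatments of $\mF^{cu}$ (via $f^{-1}$, using the two-sided bounds of \lemref{QIleaf=}) and of the lifts to $\wt M$ agree.
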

\begin{proof}
	We will only prove the completeness of $\mF^{cs}$ as the completeness of $\mF^{cu}$ can be shown in a similar way. Suppose by contradiction that there is a point $x\in M$ such that $W^s(W^c(x))$ is a strict subset of $\mF^{cs}(x)$.  As shown in \cite{BW05}, we are able to find a point $y\in \mF^{cs}(x)\backslash W^s(W^c(x))$ and a center curve $\gamma:[0, 1]\rightarrow M$ such that $\gamma(1)=y$ and $\gamma(t)\in W^s(W^c(x))$ for any $t\in \left[0,1\right)$. It follows that the stable leaf $W^s(y)$ is disjoint with $W^c(x)$. By the assumption that $f$ has quasi-isometric center bundle, there are constants $A\geq 1$ and $B>0$ such that the length $L(f^n(\gamma))$ is uniformly bounded by the constant $A\cdot L(\gamma)+B$ for any $n\in \N$, where $L(\gamma)$ is the length of $\gamma$. Take any small $\epsilon>0$. By \lemref{continuity}, there exists $\delta>0$ only depending on the choice of $\epsilon$ and the given number $A\cdot L(\gamma)+B$ such that if $W^s_{\delta}(f^n(\gamma))$ intersects the center leaf $W^c(f^n(x))$ for some integer $n\in\N$, then $W^s_{\epsilon}(f^n(y))$ intersects $W^c(f^n(x))$ as well. Note that the point $\gamma(0)$ is contained in the $s$-saturated set of $W^c(x)$. Then, there exists $N\in \N$ so that for any $n\geq N$, the point $f^n(\gamma(0))$ is contained in $W^s_{\delta}(W^c(x))$. Therefore, we conclude that the local stable manifold $W^s_{\epsilon}(f^n(y))$ has non-empty intersection with the leaf $W^c(f^n(x))$ for any $n\geq N$. However, the stable leaf $W^s(f^n(y))$ is disjoint with $W^c(f^n(x))$ for any $n\in \Z$ since $W^s(y)$ is disjoint with $W^c(x)$. Thus, we arrive at a contradiction and then complete the proof.
	
	The completeness of lifted foliations $\wt{\mF}^{cs}$ and $\wt{\mF}^{cu}$ follows by the same argument.
\end{proof}

We have the following consequence using the analogous argument in \cite{BW05, Zhang21}.

\begin{cor}\label{cylinder/plane}
	Let $f: M\rightarrow M$ be a partially hyperbolic diffeomorphism with one-dimensional quasi-isometric center bundle or quasi-isometric center foliation. Then, the leaves of $\mF^{cs}$ and $\mF^{cu}$ have abelian fundamental groups. In particular, if the manifold $M$ is 3-dimensional, then up to a double cover, each leaf of $\mF^{cs}$ and $\mF^{cu}$ is either a cylinder or a plane. A leaf is a cylinder if it contains a compact center leaf.
\end{cor}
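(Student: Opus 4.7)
The plan is to leverage the completeness of $\mF^{cs}$ (and symmetrically $\mF^{cu}$) established in \propref{complete} and apply the standard topological argument for surfaces bifoliated by two transverse one-dimensional foliations, following the approach in \cite{BW05, Zhang21}. I only discuss $\mF^{cs}$ as the $\mF^{cu}$ case is analogous. First, I would fix a leaf $L$ of $\mF^{cs}$. By \propref{complete}, $L$ is bifoliated by the restrictions of the stable and center foliations; each stable leaf in $L$ is homeomorphic to $\R$, and after lifting $L$ to its universal cover $\wt L$, the lifts of center leaves (originally either $\R$ or $S^1$ in $M$) become copies of $\R$ as well. The next ingredient is a Palmeira-type result: a simply-connected surface bifoliated by two transverse one-dimensional foliations whose leaves are all homeomorphic to $\R$ is homeomorphic to $\R^2$, with the bifoliation conjugate to the standard product by horizontal and vertical lines.

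Second, I would analyze the deck group $\pi_1(L)$, which acts freely and properly discontinuously on $\wt L \cong \R^2$ while preserving both foliations. After passing to a subgroup of index at most four that also preserves the orientations of both foliations (corresponding to passing to a finite cover of $L$, or at most a double cover in the three-dimensional orientable setting), elements take the form $(x,y)\mapsto (h(x), k(y))$ with $h$ and $k$ orientation-preserving homeomorphisms of $\R$. A H\"older-type argument then shows that any such free discrete group must be abelian, yielding the first conclusion of the corollary.

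Third, for the three-dimensional refinement: a surface with abelian fundamental group, non-compact (since it contains stable leaves homeomorphic to $\R$) and orientable after a double cover, must be homeomorphic to either $\R^2$ or the cylinder $S^1\times\R$. The criterion involving compact center leaves is then immediate: if $L$ is a plane then $\pi_1(L)=0$, so $L$ contains no embedded $S^1$, and in particular no compact center leaf; equivalently, the presence of a compact center leaf forces $\pi_1(L)\neq 0$, so $L$ is a cylinder.

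The main obstacle will be the abelianness step. This reduces, via the product structure on $\wt L$, to the classical fact that a free discrete group of orientation-preserving homeomorphisms of $\R$ is abelian (H\"older's theorem), together with a careful bookkeeping of how the action on one factor constrains the action on the other. Once this is in place, the remaining surface topology and the identification of cylinders with $cs$-leaves carrying a compact center leaf are routine.
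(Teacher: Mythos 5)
Your overall architecture (completeness of $\mF^{cs}$, H\"older's theorem, then surface topology) matches the paper's, which likewise reduces abelianness of $\pi_1(L)$ to a free action on the leaf space of the stable foliation in the universal cover of a $cs$-leaf --- a space homeomorphic to $\R$ by completeness --- and then invokes H\"older; your product-structure detour through a Palmeira-type statement is harmless but unnecessary, since only the action on the stable-leaf-space factor is needed, and its freeness follows from the non-existence of closed stable leaves. However, two of your justifications in the three-dimensional step are genuinely flawed. First, you exclude the compact (torus) case by asserting that $L$ is ``non-compact since it contains stable leaves homeomorphic to $\R$''; this does not follow: the $2$-torus carries nonsingular foliations all of whose leaves are lines (an irrational linear foliation), so containing line leaves does not preclude compactness, and since $\pi_1(\T^2)=\Z^2$ is abelian the torus is not ruled out by the first part of the corollary either. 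One needs the dynamical fact that a dynamically coherent partially hyperbolic diffeomorphism admits no compact leaf tangent to $E^s\oplus E^c$, which is exactly what the paper cites \cite{2015Center} for.

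Second, your criterion for the plane case --- ``$\pi_1(L)=0$, so $L$ contains no embedded $S^1$, and in particular no compact center leaf'' --- is false as stated: $\R^2$ contains plenty of embedded circles. What is true, and what the paper proves, is that a nonsingular bifoliated plane admits no compact leaf of either foliation: a compact center leaf $\gamma$ in a plane leaf bounds a disk (Schoenflies), and the restriction of the stable/center bifoliation to a suitable disk forces a singularity by the Poincar\'{e}--Bendixson theorem. The paper runs this in two cases, according to whether some stable leaf meets $\gamma$ in at least two points (producing a loop made of a stable segment and a center segment, bounding a disk) or every stable leaf meets $\gamma$ exactly once (contradiction inside the disk bounded by $\gamma$ itself). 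With these two repairs --- citing the absence of compact $cs$-leaves and replacing the ``no embedded circle'' claim by the Poincar\'{e}--Bendixson argument --- your proof goes through and coincides in substance with the paper's.
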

\begin{proof}
	By \propref{integrable} or \defref{QIdef-leaf}, the partially hyperbolic diffeomorphism $f$ is dynamically coherent. There are $f$-invariant foliations $\mF^{cs}$ and $\mF^{cu}$ tangent to $E^s\oplus E^c$ and $E^c\oplus E^u$, respectively, both of which are complete by \propref{complete}. It implies that, by partial hyperbolicity, there is no compact stable leaf. Then, for each center stable leaf, the fundamental group acts freely on the leaf space of stable foliation, which is homeomorphic to $\R$ as the center dimension is one. Thus, each leaf has abelian fundamental group \cite{Holder1901}.
	
	In the three-dimensional case, up to a finite lift, we can assume that each leaf of $\mF^{cs}$ is orientable. As $\mF^{cs}$ is foliated by surfaces, each leaf is either a cylinder, a plane, or a torus. However, the dynamical coherence implies the absence of compact center stable leaves \cite{2015Center}, and then excludes the presence of torus. 
	
	Let $\gamma\in \mF^c$ be a compact center leaf and $\mF^{cs}(\gamma)\in \mF^{cs}$ be the center stable leaf through $\gamma$. Suppose that $\mF^{cs}(\gamma)$ is a plane. If there is a stable leaf in $\mF^{cs}(\gamma)$ intersecting $\gamma$ in at least two points, then there is a loop in $\mF^{cs}(\gamma)$ by connecting a stable segment and a segment of $\gamma$. This loop encloses a disk since $\mF^{cs}(\gamma)$ is a plane, which gives a contradiction by Poincar\'{e}-Bendixon Theorem. If every stable leaf in $\mF^{cs}(\gamma)$ intersects $\gamma$ in a unique point, then, again by Poincar\'{e}-Bendixon Theorem, we also arrive to a contradiction in the disk bounded by $\gamma$. Therefore, the leaf $\mF^{cs}(\gamma)$ can only be a cylinder.
	
	Thus, we conclude the result for $\mF^{cs}$. The statement for $\mF^{cu}$ can be obtained in the analogous way.
\end{proof}


\section{Periodic center leaves}\label{section-periodic}
In the subsequent sections, we will primarily refer to a partially hyperbolic diffeomorphism with quasi-isometric center as defined in \defref{QIdef-leaf}. However, the discussion in this section applies equally to both notions of central quasi-isometries given in \defref{QIdef-leaf} and \defref{QIdef-curve}. For the sake of simplicity, we will state the results explicitely only for quasi-isometric center (foliation).

Let $M$ be a closed 3-dimensional manifold and consider a partially hyperbolic diffeomorphism $f: M\rightarrow M$ with quasi-isometric center. As established in \propref{integrable}, there exist invariant foliations $\mF^{cs}$, $\mF^{cu}$ and $\mF^c$, tangent to $E^s\oplus E^c$, $E^c\oplus E^u$ and $E^c$, respectively. Under the existence of these foliations, we are going to examine the presence of periodic compact center leaves and their interactions with nearby center leaves within the corresponding center-stable and center-unstable leaves.

\subsection{Existence of periodic compact center leaves}

In this subsection, we are going to explore the existence of a periodic compact center leaf. Recall in \corref{cylinder/plane} that compact center leaves can only exist in cylinders. So, our strategy is to study the center leaves in center stable and center unstable cylinder leaves.

An extreme situation is that the center stable foliation $\mF^{cs}$ has no cylinder leaf. However, the following result explicitly reveals that, in this situation, the ambient 3-manifold can be determined. 
\begin{thm}\cite{Rosenberg,Gabai90}\label{rosenberg}
	If a closed 3-manifold admits a foliation by planes, then it is the 3-torus.
\end{thm}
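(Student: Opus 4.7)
The plan is to reduce the statement to a problem about group actions on $\mathbb{R}$, via the standard strategy of lifting the foliation to the universal cover and analyzing deck transformations.

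First, I would establish that the lifted foliation on the universal cover $\widetilde{M}$ is the standard product foliation. Because the foliation $\mathcal{F}$ has no compact leaves, it is taut, and by Novikov's theorem every leaf of $\widetilde{\mathcal{F}}$ is a properly embedded plane in $\widetilde{M}$ whose leaf space is a (possibly non-Hausdorff) simply connected 1-manifold. I would then invoke Palmeira's theorem to upgrade this: a codimension-one foliation of a simply connected manifold whose leaves are all planes is homeomorphic to the standard product foliation of $\mathbb{R}^2 \times \mathbb{R}$. In particular, $\widetilde{M} \cong \mathbb{R}^3$ (so $M$ is aspherical and irreducible), and the leaf space $L := \widetilde{M}/\widetilde{\mathcal{F}}$ is homeomorphic to $\mathbb{R}$.

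Next, I would analyze the deck action of $\Gamma := \pi_1(M)$ on $\widetilde{M} \cong \mathbb{R}^2 \times \mathbb{R}$. Since $\Gamma$ preserves $\widetilde{\mathcal{F}}$, it descends to a free action on $L \cong \mathbb{R}$; call the resulting homomorphism $h: \Gamma \to \mathrm{Homeo}_+(\mathbb{R})$ (after passing to an index-$2$ subgroup if necessary to ensure orientability). The kernel $K$ of $h$ acts freely and properly discontinuously on each plane leaf, so $K$ is a surface group, and since the leaves are planes covering quotient surfaces, $K$ must be a free abelian group of rank at most $2$. For the image, I would apply H\"older's theorem: any subgroup of $\mathrm{Homeo}_+(\mathbb{R})$ acting freely is abelian. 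Combining these, $\Gamma$ is an extension of an abelian group by an abelian group, hence solvable, and one further checks that $\Gamma$ has cohomological dimension $3$, forcing $\Gamma \cong \mathbb{Z}^3$.

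Finally, once $\Gamma \cong \mathbb{Z}^3$ acts freely and properly on $\mathbb{R}^3$, Bieberbach's theorem (or a direct argument using the product structure) identifies $M$ with the $3$-torus. The main obstacle is the second step: the leaf space $L$ could a priori be non-Hausdorff, and even when it is $\mathbb{R}$, one must rule out pathological actions; this is where the hypothesis that all leaves are planes (rather than merely simply connected) is essential, and where the careful use of Palmeira's theorem together with H\"older's theorem does the heavy lifting. I would expect the delicate point to be verifying that the extension $1 \to K \to \Gamma \to \Gamma/K \to 1$ actually splits as $\mathbb{Z}^3$, as opposed to yielding some other solvable three-manifold group; ruling this out requires using that $K$ acts by deck transformations on planes with compact quotient, which pins down $K \cong \mathbb{Z}^2$, and that the action of $\Gamma/K \cong \mathbb{Z}$ on $K$ via conjugation must be trivial because otherwise the resulting nilpotent or solvable structure would be incompatible with the product foliation structure preserved by $\Gamma$.
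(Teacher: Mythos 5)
This statement is quoted in the paper as a classical theorem of Rosenberg and Gabai; the paper gives no proof of its own, so your proposal can only be measured against the standard argument, whose skeleton (Novikov, Palmeira, an action on the leaf space, $\pi_1(M)\cong\Z^3$) you have correctly identified. However, as written the proposal has three genuine gaps. First, the claim that the leaf space $L=\widetilde M/\widetilde{\mF}$ is homeomorphic to $\R$ does not follow from Palmeira's theorem, which only gives that $(\widetilde M,\widetilde{\mF})$ is a product of $\R$ with a foliation of $\R^2$ by lines; such a foliation can have a non-Hausdorff, merely simply connected leaf space, and H\"older's theorem is then unavailable. You flag this as ``the delicate point'' but do not resolve it; the actual resolution uses that a foliation by simply connected leaves has trivial holonomy, so the Sacksteder--Imanishi theory of foliations without holonomy applies and yields $L\cong\R$. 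Second, your analysis of the kernel $K$ is based on a misreading of the hypothesis: the leaves of $\mF$ are themselves planes, not compact surfaces covered by planes, so the stabilizer of each lifted leaf is $\pi_1$ of a plane, i.e.\ trivial, and hence $K=\{1\}$ rather than $\Z^2$. The entire discussion of the extension $1\to K\to\Gamma\to\Gamma/K\to1$ and of whether it ``splits as $\Z^3$'' is therefore moot; the correct route is that $\Gamma$ embeds in $\mathrm{Homeo}_+(\R)$ acting freely, hence by H\"older is a finitely generated torsion-free abelian group, and cohomological dimension $3$ (from asphericity of $M$) forces $\Gamma\cong\Z^3$.

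Third, the final step is not Bieberbach's theorem, which concerns affine crystallographic actions and says nothing about an arbitrary free, properly discontinuous topological action of $\Z^3$ on $\R^3$. What is needed is that a closed aspherical $3$-manifold with $\pi_1\cong\Z^3$ is homeomorphic to $T^3$; this requires irreducibility of $M$ (via the equivariant sphere theorem or tautness of the foliation) together with Waldhausen's rigidity for Haken manifolds, and this topological identification is precisely the part of the statement that goes beyond Rosenberg's original paper and motivates the second citation. So the overall strategy is the right one, but the Hausdorffness of the leaf space, the structure of the leaf stabilizers, and the passage from $\pi_1(M)\cong\Z^3$ to $M\cong\T^3$ all need to be repaired before this constitutes a proof.
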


We first note that any non-compact center leaf in a center stable cylinder must have infinitely many distinct intersection points with every stable leaf, see the lemma below. As we will see in \lemref{centersaturated}, the compact center leaves have a totally different nature. 

The following lemma is indicated by an argument appeared in \cite[Lemma 3.6]{Zhang21} concerned with the topologically neutral center setting. We present the proof here in the context of quasi-isometric center.

\begin{lem}\label{twice}
	Let $F\in \mF^{cs}$ be a cylinder leaf. If $l\subset F$ is a non-compact center leaf, then for any point $y\in F$, the stable leaf $W^s(y)$ intersects $l$ in infinitely many points.
\end{lem}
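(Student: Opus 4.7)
The plan is to work in the universal cover $\wt F \cong \R^2$ of the cylinder $F$, exploiting the deck-group action on the two lifted foliations. Let $\alpha$ generate $\pi_1(F)\cong\Z$ and fix a lift $\wt l$ of $l$. Because $l$ is non-compact (and hence homeomorphic to $\R$), its $\pi_1(F)$-stabilizer is trivial, so the lifts $\{\alpha^n\wt l:n\in\Z\}$ are pairwise distinct leaves of $\wt{\mF}^c$ in $\wt F$. Similarly, fix any lift $\wt W^s(\wt y)$ of the stable leaf $W^s(y)$.

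The main intermediate fact I would establish is that in the simply connected surface $\wt F$, a stable leaf meets a center leaf in at most one point. Non-emptiness of the intersection with each $\alpha^n \wt l$ is the lifted completeness of $\mF^{cs}$ from \propref{complete}, namely $\wt F = \wt W^s(\wt W^c(\wt z))$ applied at a point $\wt z \in \alpha^n \wt l$. Uniqueness is forced by a Poincar\'e--Bendixson-type argument essentially identical to the one used in the proof of \corref{cylinder/plane}: a second intersection between a stable and a center leaf would create a simple closed curve in $\wt F$ bounding a disk whose boundary is alternately a center segment and a stable segment, and the two transverse line foliations on that disk are incompatible with the corner data.

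Granting this, for each $n \in \Z$ the set $\wt W^s(\wt y) \cap \alpha^n \wt l$ consists of a single point $\wt z_n$, and these $\wt z_n$ are pairwise distinct since they lie on pairwise distinct center leaves. To finish, I check that they project to distinct points of $F$. Suppose $\pi(\wt z_n) = \pi(\wt z_m)$, so that $\wt z_n = \alpha^k \wt z_m$ for some $k \in \Z$. Then $\wt z_n \in \alpha^n \wt l \cap \alpha^{k+m}\wt l$ forces $n = k+m$, while $\wt z_n \in \wt W^s(\wt y) \cap \alpha^k \wt W^s(\wt y)$ together with uniqueness of stable-stable intersection (i.e.\ distinct stable leaves are disjoint) forces $\alpha^k \wt W^s(\wt y) = \wt W^s(\wt y)$. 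Since stable leaves of a partially hyperbolic diffeomorphism are never compact, the stabilizer of $\wt W^s(\wt y)$ in $\pi_1(F)$ is trivial, so $k = 0$ and hence $n = m$. Thus $\{\pi(\wt z_n)\}_{n\in\Z}$ is an infinite family of distinct points of $W^s(y) \cap l$ in $F$.

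The main obstacle I anticipate is the uniqueness step above; the rest is essentially bookkeeping with the deck action. Fortunately, the planarity-plus-Poincar\'e--Bendixson argument needed has already been deployed in \corref{cylinder/plane}, so the actual write-up can invoke that mechanism rather than redo it. Note that this argument does not require the quasi-isometric center hypothesis directly — it relies only on the completeness of $\mF^{cs}$ from \propref{complete}, which the hypothesis supplies.
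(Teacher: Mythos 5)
Your proof is correct and is essentially the paper's own argument in mirror image: the paper translates the lifted stable leaf by the deck transformation $\tau$ generating $\pi_1(F)$ and intersects each translate with a fixed lift $\tilde l$, whereas you translate the lifted center leaf and intersect with a fixed lifted stable leaf; both rely on the completeness of $\wt{\mF}^{cs}$ from \propref{complete} for existence, a Poincar\'e--Bendixson argument for uniqueness of each intersection, non-compactness of $l$ for one distinctness step, and the absence of closed stable leaves for the other. No substantive difference.
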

\begin{proof}
	Let $\tilde{F}\in\wt{\mF}^{cs}$ be a lifted leaf in the universal cover $\widetilde{M}$. Denote by $\tilde{l}\subset\tilde{F}$ a lift of $l$ and $\tilde{y}\in \tilde{F}$ a lift of $y$ . By the completeness of $\wt{\mF}^{cs}$ given in \propref{complete}, the leaf $\tilde{l}$ intersects the stable leaf $\wt{W}^s(\tilde{y})$ in a point, say $z\in \tilde{F}$. As $F$ is a cylinder leaf, the group of deck transformations fixing $\tilde{F}$ has a non-trivial generator $\tau\in\pi_1(F)$. For each $i\in\Z$, $\tau^i(\wt{W}^s(\tilde{y}))$ is a stable leaf in $\tilde{F}$ distinct from $\wt{W}^s(\tilde{y})$, since there is no closed stable leaf in $M$. By the completeness again, the leaves $\tau^i(\wt{W}^s(\tilde{y}))$ and $\tilde{l}$ have non-empty intersection for each $i\in\Z$. Denote by $z_i=\tau^i(\wt{W}^s(\tilde{y}))\cap\tilde{l}$, which is the unique point by Poincar\'{e}-Bendixon Theorem. By the fact that $l$ is non-compact, the points $z_i\in\tilde{l}$ are pairwise distinct and also distinct from $z$. Thus, by projecting $z_i$ under the universal covering map, the leaf $W^s(y)$ has infinitely many intersection points with $l$.
\end{proof}

The following proposition is a variation of Bowen's shadowing lemma for Axiom A diffeomorphisms. While the classical shadowing lemma implies the existence of periodic points, the result below produces the existence of periodic compact center leaves. We refer the reader to \cite{Bowen_book, Carrasco15compact} and \cite[Appendix A]{BZ20} for the details.
\begin{prop}\cite[Proposition A.1]{BZ20}\label{Bowen}
	Let $f: M\rightarrow M$ be a dynamically coherent partially hyperbolic diffeomorphism of a closed 3-manifold and $\Lambda\subset M$ be a compact center saturated set. If the center leaves in $\Lambda$ are uniformly compact, then for any $\epsilon>0$, there exists a periodic compact center leaf in the $\epsilon$-neighborhood of $\Lambda$.
\end{prop}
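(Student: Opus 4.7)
\medskip

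\noindent\textbf{Proof proposal.} The plan is to adapt Bowen's shadowing/closing argument for Axiom A diffeomorphisms, but carried out at the level of center leaves rather than points. The starting observation is that, because $\Lambda$ is compact and center-saturated and the center leaves inside $\Lambda$ are uniformly compact, the quotient
$$Q := \Lambda/\mathcal{F}^c$$
is a compact metrizable space and $f$ descends to a homeomorphism $\bar f:Q\to Q$. Dynamical coherence gives us $\mathcal{F}^{cs}$ and $\mathcal{F}^{cu}$ transverse to $\mathcal{F}^c$, and since stable and unstable plaques are transverse to $\mathcal{F}^c$ inside $\Lambda$, they descend to two transverse one-dimensional (topological) foliations on $Q$, contracted and expanded respectively by $\bar f$. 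In short, $\bar f$ is a topologically hyperbolic homeomorphism of the compact metric space $Q$.

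Given $\epsilon>0$, I first choose $\delta>0$ small enough that any center leaf in $\Lambda$ whose $\mathcal{F}^s,\mathcal{F}^u$-holonomy image lies within $\delta$ of itself will automatically be within $\epsilon$ of $\Lambda$ in the ambient metric; uniform compactness of the center leaves in $\Lambda$ makes this choice uniform. By compactness of $Q$ and surjectivity of $\bar f$, I pick a recurrent leaf $L_0\in Q$: there are arbitrarily large $n\in\mathbb{N}$ with $d_Q(\bar f^n(L_0),L_0)<\delta$. Choose $n$ large enough that $\bar f^n$ is strongly contracting in the stable direction and $\bar f^{-n}$ strongly contracting in the unstable direction near $L_0$ in $Q$.

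The next step is a rectangle / graph transform argument in a product chart around $L_0$ in $Q$. Using the transverse product structure coming from $\mathcal{F}^{cs}$ and $\mathcal{F}^{cu}$ (and hence stable/unstable holonomies between nearby center leaves, which are well-defined on a small neighborhood), I can identify a neighborhood of $L_0$ in $Q$ with a product $U^s\times U^u$ of small stable and unstable transversals. The map $\bar f^n$ sends a sub-rectangle back into this chart as a Markov-like rectangle, uniformly contracting $U^s$ and expanding $U^u$. A standard hyperbolic fixed-point argument (contracting map on $U^s$ for forward iterates, on $U^u$ for backward iterates, intersect the graphs) produces a fixed point $L\in Q$ of $\bar f^n$, within $\delta$ of $L_0$. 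Pulling back, $L$ corresponds to a compact center leaf in $M$ with $f^n(L)=L$, i.e.\ a periodic compact center leaf. By the choice of $\delta$, $L$ lies in the $\epsilon$-neighborhood of $\Lambda$.

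The main obstacle I expect is the first step: verifying that $Q$ is genuinely Hausdorff, that the holonomies between close-by center leaves in $\Lambda$ extend to a well-defined product structure, and that the induced transverse foliations are honestly uniformly contracted/expanded by $\bar f$. All of this rests on \emph{uniform} compactness of the center leaves in $\Lambda$ together with dynamical coherence; without uniformity one cannot rule out holonomy blow-up between nearby leaves and the quotient $Q$ may fail to be Hausdorff. Once the quotient hyperbolic system $(Q,\bar f)$ is in place, the closing argument is essentially the classical Anosov closing lemma applied in $Q$, and the $3$-dimensional hypothesis ($\dim E^c=1$, center leaves being circles) keeps the local geometry of the product chart completely transparent.
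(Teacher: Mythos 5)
The paper does not prove this proposition; it quotes it from \cite[Proposition A.1]{BZ20} and only remarks that it is a Bowen-type shadowing argument at the level of center leaves. Your overall strategy (recurrence plus a hyperbolic closing argument transverse to the center foliation) is the right one in spirit, but there is a genuine gap in where you run the closing argument.

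You carry out the entire fixed-point construction inside the quotient $Q=\Lambda/\mathcal{F}^c$ and conclude that $\bar f^n$ has a fixed point $L\in Q$, i.e.\ a periodic compact center leaf \emph{inside} $\Lambda$. That is strictly stronger than the statement, and it is false in general: nothing in the hypotheses gives $\Lambda$ a local product structure. Concretely, take a skew product over an Anosov diffeomorphism $A$ of $\mathbb{T}^2$ and let $K\subset\mathbb{T}^2$ be the closure of an orbit lying in a minimal subset of a horseshoe with no periodic points (such minimal sets exist, e.g.\ coded by a Sturmian subshift). Then $\Lambda=\pi^{-1}(K)$ is compact, center saturated, with uniformly compact center leaves, yet contains no periodic compact center leaf, since such a leaf would project to a periodic point of $A$ in $K$. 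So your "Markov-like rectangle'' $U^s\times U^u$ cannot be a subset of $Q$: the intersection of the stable graph through $L_0$ and the unstable graph through $\bar f^n(L_0)$ is a center leaf of $M$ that in general leaves $\Lambda$. This is exactly why the proposition only asserts existence of a periodic compact center leaf in the $\epsilon$-neighborhood of $\Lambda$. The correct argument picks a recurrent compact center leaf $\gamma\subset\Lambda$ with $f^n(\gamma)$ $\delta$-close to $\gamma$, and then runs the graph-transform/closing argument in the ambient manifold, using local $\mathcal{F}^{cs}$- and $\mathcal{F}^{cu}$-plaques around $\gamma$ (whose holonomies are controlled by the uniform compactness of the leaves in $\Lambda$); the resulting invariant center leaf shadows the segment $\gamma, f(\gamma),\dots,f^n(\gamma)$ and stays $\epsilon$-close to $\Lambda$ but need not lie in it. One must also then verify that this shadowing leaf is itself compact (leaves near a compact leaf can a priori spiral), which again uses the uniform bound on the lengths of leaves in $\Lambda$; your write-up takes this for granted. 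Your own flagged worry (Hausdorffness of $Q$, holonomy blow-up) is handled by Epstein's theorem on uniformly compact foliations and is the less serious issue; the localization of the fixed point is the real one.
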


Now, we are ready to show our main result of this subsection.
\begin{prop}\label{periodiccompact}
	Let $f: M\rightarrow M$ be a partially hyperbolic diffeomorphism with quasi-isometric center in a closed 3-manifold. Then, there is at least one $f$-periodic compact center leaf.
\end{prop}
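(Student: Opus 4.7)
The plan is to split the argument into two steps: first, reduce the conclusion to exhibiting any single compact center leaf; second, produce such a leaf by analyzing the topological type of the invariant foliations.

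\emph{Step 1: reduction from one compact center leaf to a periodic one.} Suppose a compact center leaf $\gamma_0$ has been found. By the quasi-isometric assumption, see \lemref{QIleaf=}(2), there exist constants $C\ge 1$, $D>0$ such that $L(f^n(\gamma_0))\le C\cdot L(\gamma_0)+D$ for every $n\in\Z$, so each iterate $f^n(\gamma_0)$ is itself a compact center leaf of length at most $C\cdot L(\gamma_0)+D$. Form the compact $f$-invariant set
$$
\Lambda=\overline{\bigcup_{n\in\Z}f^n(\gamma_0)}.
$$
Using the continuity of $\mF^c$ together with the uniform length bound, a standard limit argument shows that any Hausdorff limit of a sequence of compact center leaves of length at most $C\cdot L(\gamma_0)+D$ is again a compact center leaf of length at most $C\cdot L(\gamma_0)+D$. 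Hence $\Lambda$ is center saturated and its center leaves are uniformly compact, and \propref{Bowen} applied to $\Lambda$ yields an $f$-periodic compact center leaf in $M$.

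\emph{Step 2: existence of one compact center leaf.} I would argue by contradiction, assuming no compact center leaf exists, and split into two cases according to whether the center-stable foliation $\mF^{cs}$ admits a cylinder leaf (see \corref{cylinder/plane}). If every leaf of $\mF^{cs}$ is a plane, then $M$, possibly after passing to a double cover for orientability, carries a codimension-one foliation by planes, and \thmref{rosenberg} forces $M\cong\T^3$. In this setting, combining the quasi-isometric center assumption with the classification of partially hyperbolic diffeomorphisms on $\T^3$ due to Hammerlindl and Potrie produces compact center leaves, contradicting the standing assumption. If instead some leaf $F$ of $\mF^{cs}$ is a cylinder, I aim to show that $F$ itself must contain a compact center leaf. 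Passing to the universal cover, $\tilde F\cong\R^2$ carries two transverse one-dimensional foliations coming from $\mF^c$ and $\mF^s$; since, by assumption, no compact center leaf exists in $F$, and no compact stable leaf exists anywhere, every lifted leaf is a properly embedded line and the pair provides a product structure $\tilde F\cong \R_c\times\R_s$ in which the generator $\tau$ of $\pi_1(F)\cong\Z$ acts freely on each factor. \lemref{twice} guarantees that each non-compact center leaf in $F$ meets each stable leaf in $F$ infinitely many times; translating these intersections into the product coordinates and tracking them under $f$, which is quasi-isometric along $E^c$ but exponentially contracting along $E^s$, should deliver the required contradiction.

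\emph{Main obstacle.} The essential difficulty concentrates in the cylinder subcase of Step 2. Topologically a cylinder $cs$-leaf foliated solely by non-compact center leaves is not forbidden, so the contradiction has to come from the dynamics. Concretely, one must quantify how the infinite intersections provided by \lemref{twice} are distributed along a stable leaf, use the quasi-isometric bound on center lengths to prevent these intersection points from being spread too far apart in the center direction, and then invoke the exponential contraction of the stable bundle under $f$ to force a collision that is incompatible with the product picture on $\tilde F$. Once this cylinder case is handled, Step 1 completes the argument.
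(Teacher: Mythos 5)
Your Step 1 (upgrading a compact center leaf to a periodic one via \propref{Bowen}) and the plane subcase of Step 2 (Rosenberg's theorem forcing $M\cong\T^3$, then Hammerlindl's classification) coincide with the paper's argument. The problem is the cylinder subcase, which you yourself flag as the "main obstacle" and leave unresolved: you propose to show that a cylinder leaf $F$ of $\mF^{cs}$ all of whose center leaves are non-compact leads to a contradiction, but you only sketch a strategy ("should deliver the required contradiction") without carrying it out. This is a genuine gap, and moreover the intended contradiction is the wrong target: nothing in the paper (and, as far as the setup shows, nothing in the hypotheses) forces a cylinder $cs$-leaf to contain a compact center leaf. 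The generator $\tau$ of $\pi_1(F)$ can perfectly well act freely on the center leaf space of $\tilde F\cong\R$, which is exactly what \lemref{twice} describes, so the product picture on $\tilde F$ with all center leaves non-compact is topologically and dynamically consistent. Chasing a contradiction inside $F$ is therefore likely to fail.

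What the paper does instead in this subcase is constructive and takes place in $M$, not in $F$. Using \lemref{twice}, a non-compact center leaf of $F$ meets a stable leaf twice, so one can close up a loop $\alpha\subset F$ consisting of one center arc and one stable arc. Under forward iteration the stable arc contracts uniformly while the center arc has length bounded by $A\cdot L(\alpha)+B$ thanks to the quasi-isometric center (\defref{QIdef-leaf}), so $L(f^n(\alpha))$ is uniformly bounded. By compactness of $M$ the loops $f^n(\alpha)$ accumulate on a closed curve $\beta$; since the stable part has degenerated in the limit, $\beta$ is a closed curve tangent to $E^c$, i.e.\ a compact center leaf --- which may well lie in a different $cs$-leaf than $F$. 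One then feeds the closure of the orbit of $\beta$ into \propref{Bowen} exactly as in your Step 1. If you replace your contradiction scheme in the cylinder case by this limit-of-loops construction, the proof closes.
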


\begin{proof}
	We first suppose that all leaves of $\mF^{cs}$ are planes. As shown in \thmref{rosenberg}, the manifold $M$ is the 3-torus. By the quasi-isometric property in the center, it is shown in \cite{Hammerlindl13} that the diffeomorphism $f$ is (leaf) conjugate to a skew product over an Anosov diffeomorphism on the 2-torus. In particular, it has non-planar center stable leaves, which is absurd. Then, there exists at least one leaf of $\mF^{cs}$ that is not a plane.
	
	It follows from \corref{cylinder/plane} that there is at least one cylinder leaf, denoted by $F\in \mF^{cs}$. Suppose that $F$ contains a compact center leaf, denoted by $\gamma$. Let $\Lambda\subset M$ be the closure of the orbit $\{f^n(\gamma)\}_{n\in\Z}$, which is an $f$-invariant center saturated set. By the definition of quasi-isometric center, see \defref{QIdef-leaf} and \lemref{QIleaf=}, the center leaves in $\Lambda$ are uniformly compact of lengths bounded by $A\cdot L(\gamma)+B$ for some constants $A\geq 1$ and $B>0$.
	By \propref{Bowen}, there exists a periodic compact center leaf in an arbitrarily small neighborhood of the compact set $\Lambda$. 
	
	If the center leaves in $F$ are all non-compact, then we can construct a loop, denoted by $\alpha\subset F$, by connecting a center curve with a stable curve. Such a loop always exists due to \lemref{twice}. As the loop $\alpha$ is compact and each stable curve is uniformly contracting, we deduce that the length $L(f^n(\alpha))$ is uniformly bounded by $A\cdot L(\alpha)+B$ for all $n\in \N$, where $L(\alpha)$ denotes the length of the curve $\alpha$. It turns out by the compactness of $M$ that the sequence of curves $f^n(\alpha)$ accumulates in a compact curve $\beta$ of length also bounded by $A\cdot L(\alpha)+B$. Moreover, the curve $\beta$ is a compact center leaf by the construction of $\alpha$. By the quasi-isometric center property, the center leaves in the orbit of $\beta$ are uniformly compact. Again using \propref{Bowen}, there exists a periodic compact center in a small neighborhood of the closure of orbit of $\beta$. Thus, we finish the proof.
\end{proof}

\subsection{Homoclinic intersection of periodic center leaves}

We are going to recall a notion, called $N$ homoclinic point, for surface diffeomorphisms with respect to a hyperbolic saddle given in \cite{BonattiLangevin98} and introduce its generalization for partially hyperbolic diffeomorphisms with quasi-isometric center with respect to a periodic compact center leaf. The techniques in this subsection were essentially developed in \cite{BZ20} in which the authors are concerned with diffeomorphisms admitting topologically neutral center.

\begin{defn}
	Let $f: S\rightarrow S$ be a $C^1$ diffeomorphism of a surface without homoclinic tangencies. Given a hyperbolic saddle point $p\in S$, we say that a point $x\in W^s(p)\cap W^u(p)\backslash\{p\}$ is an \emph{$N$ homoclinic point} of $p$ if the stable segment $s_p(x)$ between $p$ and $x$ intersects the unstable segment $u_p(x)$ between $p$ and $x$ in $N$ points. In particular, we can define a function $n: W^s(p)\cap W^u(p)\backslash\{p\}\rightarrow \N$ such that $x$ is an $n(x)$ homoclinic point of $p$. We call the integer $n(x)$ the \emph{homoclinic intersection number} of $x$ with respect to $p$.
\end{defn}

\begin{prop}\cite[Proposition 6.3]{BZ20}
	Let $f: S\rightarrow S$ be a $C^1$ diffeomorphism of a surface without homoclinic tangencies and $p$ be a hyperbolic saddle fixed point. The homoclinic intersection number is $f$-invariant, i.e., $n(x)=n(f^n(x))$ for any $x\in W^s(p)\cap W^u(p)\backslash\{p\}$ and any $n\in \Z$. Moreover, for each $N\in\N$, there are finitely many orbits $O(x)\colon=\{f^n(x)\}_{n\in \Z}$ with homoclinic intersection number $n(f^n(x))=n(x)\leq N$.
\end{prop}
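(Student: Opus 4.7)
For the $f$-invariance of $n(\cdot)$, since $p$ is a hyperbolic fixed point, $f$ restricts to a contracting (respectively expanding) diffeomorphism of $W^{s}(p)$ (respectively $W^{u}(p)$) fixing $p$, and thus carries $s_{p}(x)$ bijectively onto $s_{p}(f(x))$ and $u_{p}(x)$ onto $u_{p}(f(x))$. Being a global diffeomorphism, $f$ restricts to a bijection $s_{p}(x)\cap u_{p}(x)\to s_{p}(f(x))\cap u_{p}(f(x))$, which immediately gives $n(x)=n(f(x))$; iteration yields $n(x)=n(f^{k}(x))$ for all $k\in\Z$.

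For the finiteness, my plan is to pick a compact fundamental domain $\Delta\subset W^{s}(p)\setminus\{p\}$ for the contracting action $f|_{W^{s}(p)}$, taking one on each side of $p$. Every homoclinic orbit admits a unique representative in $\Delta$, so it suffices to show that $\{x\in\Delta\cap W^{u}(p):n(x)\leq N\}$ is finite. The point of normalizing inside $W^{s}(p)$ is that for such $x$ the segment $s_{p}(x)$ is contained in a fixed compact arc $\gamma\subset W^{s}(p)$ running from $p$ through $\Delta$, so stable lengths are uniformly bounded. Parametrizing $W^{u}(p)$ by arclength as $\phi:\R\to S$ with $\phi(0)=p$, and writing $x_{k}=\phi(t_{k})$ and $s_{k}$ for the stable arclength of $x_{k}$ from $p$, an infinite sequence of distinct $x_{k}$ in the set above would, by compactness of $\Delta$ together with discreteness of $\phi^{-1}(\gamma)$ in $\R$ (guaranteed by the no-tangency hypothesis, which makes every intersection of $W^{u}(p)$ with $\gamma$ transverse), yield $x_{k}\to x_{\infty}\in\Delta$ with $|t_{k}|\to\infty$; after a further subsequence both $s_{k}$ and $t_{k}$ are monotone.

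The crux, and the step I expect to be the main obstacle, is to convert this monotone accumulation into more than $N$ common points of $s_{p}(x_{k})$ with $u_{p}(x_{k})$. In the favourable sign combination --- $s_{k}$ increasing toward the stable arclength of $x_{\infty}$, $t_{k}$ increasing to $+\infty$ --- a direct combinatorial observation suffices: for every pair $j<k$ the earlier point $x_{j}$ lies both on $s_{p}(x_{k})$ (since it is closer to $p$ in $W^{s}(p)$) and on $u_{p}(x_{k})$ (since $t_{j}<t_{k}$), yielding $k-1$ distinct points in $s_{p}(x_{k})\cap u_{p}(x_{k})$ and hence $n(x_{k})\geq k-1$, which contradicts $n(x_{k})\leq N$ as soon as $k>N+1$. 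The remaining sign configurations, where the $x_{k}$'s approach $x_{\infty}$ from the far side of $p$ in $W^{s}(p)$, are handled by the Palis $\lambda$-lemma applied at $x_{\infty}$ --- transversality being supplied by the no-tangency hypothesis --- which forces $u_{p}(x_{k})$ to $C^{1}$-shadow arbitrarily long compact pieces of $W^{u}(p)$ emanating from $p$, each of whose transverse intersections with $\gamma$ contributes a point of $s_{p}(x_{k})\cap u_{p}(x_{k})$, producing more than $N$ such points for $k$ large.
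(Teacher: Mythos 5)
The $f$-invariance argument is correct, and matches what the paper does for the center-leaf analogue (Lemma~\ref{f-inv}). In the finiteness argument, the reduction to a compact fundamental domain $\Delta\subset W^s(p)\setminus\{p\}$, the extraction of $x_k\to x_\infty\in\Delta$ with $|t_k|\to\infty$, and the favourable monotone case are all fine. The genuine gap is exactly the case you flag as the crux: when the stable arclengths $s_k$ decrease to that of $x_\infty$, i.e.\ the $x_k$ approach $x_\infty$ from the side away from $p$. Three problems there. First, the $\lambda$-lemma cannot be ``applied at $x_\infty$'': you only know $x_\infty\in W^s(p)$, not $x_\infty\in W^u(p)$, so there is no transverse intersection at $x_\infty$, and the no-tangency hypothesis (transversality at each point of $W^s(p)\cap W^u(p)$) does not prevent the angle between $T_{x_k}W^u(p)$ and $T_{x_k}W^s(p)$ from degenerating as $k\to\infty$. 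Second, even granting a uniformly transverse family of small unstable arcs $D_k\ni x_k$, the inclination lemma controls $f^n(D_k)$ as $n\to\infty$; these are pieces of $W^u(p)$ around $f^n(x_k)$, whose unstable parameters exceed $t_k$ and which therefore lie outside $u_p(x_k)=\phi([0,t_k])$ --- so it does not force $u_p(x_k)$ itself to shadow long pieces of $W^u(p)$ emanating from $p$. Third, intersections with your arc $\gamma$ are not intersections with $s_p(x_k)$: in this configuration the known intersection points of $u_p(x_k)$ with $\gamma$, namely the $x_j$ with $j<k$, satisfy $s_j>s_k$ and hence lie in $\gamma\setminus s_p(x_k)$, contributing nothing to $n(x_k)$.

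The case split is avoidable by a marker argument that closes the gap uniformly. Fix one homoclinic point $x_1=\phi(t_1)$ with $t_1>0$ and use its forward orbit: the stable arclength of $f^{2m}(x_1)$ tends to $0$, so for all $m\geq m_0$ (with $m_0$ depending only on the distance from $p$ to $\Delta$) the point $f^{2m}(x_1)$ lies on $s_p(x_k)$ for every $k$; its unstable arclength parameter increases to $+\infty$ in $m$, so $f^{2m}(x_1)\in u_p(x_k)$ for all $m\leq m_k$, where $m_k\to\infty$ as $t_k\to\infty$. Hence $n(x_k)\geq m_k-m_0\to\infty$, contradicting $n(x_k)\leq N$ in every sign configuration at once. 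For comparison: the paper does not reprove this cited statement, and its proof of the center-leaf analogue, \propref{finitecenter}, takes a different route --- choosing a leaf of maximal intersection number at most $N$ and trapping every other such leaf in a fixed compact region, where transversality gives finiteness --- so your fundamental-domain-plus-accumulation strategy is genuinely different, but as written it is not complete.
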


We are going to generalize this notion to the context of partially hyperbolic diffeomorphisms with quasi-isometric center. Before doing that, let us provide the following lemma to ensure that the generalization of homoclinic intersection points to center leaves is well-defined.

\begin{lem}\label{centersaturated}
	Let $f:M\rightarrow M$ be a partially hyperbolic diffeomorphism with two complete invariant foliations $\mF^{cs}$ and $\mF^{cu}$ and $\gamma$ be an $f$-periodic compact center leaf. We have the following:
	\begin{itemize}
		\item The center leaf $\gamma$ intersects any stable leaf in $\mF^{cs}(\gamma)$ in a unique point and intersects any unstable leaf in $\mF^{cu}(\gamma)$ in a unique point.
		\item For any point $x\in \mF^{cs}(\gamma)\cap \mF^{cu}(\gamma)$, we have that the center leaf $\mF^c(x)$ through $x$ is entirely contained in $W^s(\gamma)\cap W^u(\gamma)$.
	\end{itemize}
\end{lem}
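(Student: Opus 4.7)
The plan is to prove the two items in turn: the first amounts to a Jordan curve / Poincar\'e--Bendixon argument in the universal cover of the center-stable cylinder, while the second follows essentially formally from the completeness identities established in \propref{complete}.

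For the first item, I would begin by noting that existence of the intersection is a direct consequence of completeness. Since $\mF^c(\gamma)=\gamma$ (as $\gamma$ is itself a whole center leaf), the identity $\mF^{cs}(\gamma)=W^s(W^c(\gamma))$ reduces to $\mF^{cs}(\gamma)=W^s(\gamma)$, so every stable leaf lying in $\mF^{cs}(\gamma)$ already meets $\gamma$. The content of the statement is uniqueness. Suppose for contradiction that a stable leaf $W^s(y)\subset\mF^{cs}(\gamma)$ meets $\gamma$ in two distinct points $p_1,p_2$. By \corref{cylinder/plane}, the leaf $\mF^{cs}(\gamma)$ is a cylinder; pass to its universal cover $\tilde C$, a plane. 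The preimage of the compact center circle $\gamma$ is a single properly embedded line $\tilde\gamma$, because $\gamma$ represents the generator of $\pi_1(\mF^{cs}(\gamma))$ (any additional component would be a closed curve in the plane $\tilde C$, whose bounded disk would carry the transverse stable foliation and yield an immediate Poincar\'e--Bendixon contradiction). Pick a connected lift $\tilde W^s$ of $W^s(y)$; it maps homeomorphically onto $W^s(y)$, so it contains unique lifts $\tilde p_1,\tilde p_2$ of $p_1,p_2$, and both of these sit on $\tilde\gamma$. Concatenating the segment of $\tilde W^s$ from $\tilde p_1$ to $\tilde p_2$ with the segment of $\tilde\gamma$ joining them gives an embedded loop in the plane $\tilde C$, which by the Jordan curve theorem bounds a disk $D$. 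The restrictions of the center and stable foliations to $D$ form two transverse one-dimensional foliations, and the same Poincar\'e--Bendixon reasoning used at the end of the proof of \corref{cylinder/plane} (no closed leaves, only two ``tangencies'' at the corners $\tilde p_1,\tilde p_2$) produces a contradiction. The argument for unstable leaves in $\mF^{cu}(\gamma)$ is entirely analogous.

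For the second item, let $x\in\mF^{cs}(\gamma)\cap\mF^{cu}(\gamma)$ and take any $y\in\mF^c(x)$. Since each center leaf is contained in the center-stable leaf through any of its points, $\mF^c(x)\subset\mF^{cs}(x)=\mF^{cs}(\gamma)$, and similarly $\mF^c(x)\subset\mF^{cu}(x)=\mF^{cu}(\gamma)$. Invoking completeness as above, $\mF^{cs}(\gamma)=W^s(\gamma)$ and $\mF^{cu}(\gamma)=W^u(\gamma)$. Hence $y\in W^s(\gamma)\cap W^u(\gamma)$, which is the desired conclusion.

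The genuine obstacle is the uniqueness step: one has to set up the covering $\tilde C\to\mF^{cs}(\gamma)$ carefully, identify the preimage of $\gamma$ as a single line rather than infinitely many disjoint lines, and then invoke the transverse foliation argument on the disk. Everything else, including the second item, is bookkeeping with the completeness relations $\mF^{cs}(x)=W^s(W^c(x))=W^c(W^s(x))$ and the fact that $\gamma$ is its own full center leaf.
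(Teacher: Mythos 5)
Your proof is correct. The second item is argued exactly as in the paper: completeness gives $\mF^{cs}(\gamma)=W^s(\gamma)$ and $\mF^{cu}(\gamma)=W^u(\gamma)$ because $\gamma$ is a full center leaf, and the $\mF^c$-saturation of $cs$- and $cu$-leaves does the rest. For the uniqueness claim in the first item, however, you take a genuinely different route. The paper argues dynamically: if $p\neq q\in\gamma$ with $p\in W^s(q)$, then iterating forward (along multiples of the period) contracts the stable arc between $f^n(p)$ and $f^n(q)$ while both points remain on a compact center leaf, and the limiting configuration violates the transversality of $E^s$ and $E^c$. Your argument is instead purely topological --- universal cover of the cylinder leaf, Jordan curve theorem, Poincar\'{e}--Bendixson --- which is precisely the mechanism the paper itself uses in the proofs of \corref{cylinder/plane} and \lemref{twice}, so it fits the paper's toolkit. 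What your approach buys is that it never uses the $f$-periodicity of $\gamma$: it establishes the unique-intersection property for an arbitrary compact center leaf in a complete $cs$-leaf. What the paper's approach buys is brevity, since it avoids all covering-space bookkeeping.

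Two small points in your write-up should be tightened, though neither is a genuine gap. First, your parenthetical reason that the preimage of $\gamma$ in $\tilde C$ is a single line is not quite right: if $[\gamma]$ were a proper multiple of the generator, the extra components would be disjoint properly embedded lines, not closed curves. The correct justification is that an embedded essential circle in a cylinder is primitive in $\pi_1$, so $\langle[\gamma]\rangle=\pi_1(\mF^{cs}(\gamma))$ and the preimage is connected; the null-homotopic case is indeed excluded by the Poincar\'{e}--Bendixson argument on the bounded disk, as you say. Second, to invoke the Jordan curve theorem you should choose $\tilde p_1,\tilde p_2$ to be adjacent intersection points along $\tilde W^s$ (an innermost bigon), so that the concatenated loop is embedded. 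With these adjustments your argument is complete.
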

\begin{proof}
	By the completeness of $\mF^{cs}$, the leaf $\mF^{cs}(\gamma)$ through $\gamma$ is exactly the same set as $W^s(\gamma)$. It implies that $\gamma$ has non-empty intersection with any stable leaf in $\mF^{cs}(\gamma)$. Suppose that there are two points $p\neq q\in \gamma$ such that $p\in W^s(q)$. Then, for any small $\epsilon>0$, there exists $N\in \N$ so that for any $n\geq N$, $f^n(p)$ is contained in the local stable manifold $W^s_{\epsilon}(f^n(q))$. Since $f^n(p)$ and $f^n(q)$ are contained in the same compact center leaf $\gamma$, we may assume up to a subsequence that $f^n(p)$ and $f^n(q)$ converge to a common point in $\gamma$. Then we lose the transversality of $E^s$ and $E^c$ at this limit point, which is a contradiction. The same holds for $\mF^{cu}(\gamma)$.
	
	By the completeness, we have that $\mF^{cs}(\gamma)\cap \mF^{cu}(\gamma)$ coincides with $W^s(\gamma)\cap W^u(\gamma)$. As each leaf of $\mF^{cs}$ and $\mF^{cu}$ is $\mF^c$-saturated, the leaf $\mF^c(x)$ is a subset of $W^s(\gamma)\cap W^u(\gamma)$ for any point $x\in \mF^{cs}(\gamma)\cap \mF^{cu}(\gamma)$. 
\end{proof}

For any point $x\in \mF^{cs}(\gamma)\cap \mF^{cu}(\gamma)\backslash\gamma$, as shown in \lemref{centersaturated}, the leaf $W^s(x)$ intersects $\gamma$ in a unique point, denoted by $x_s=\gamma\cap W^s(x)$. If the center leaf $\mF^c(x)$ is not compact, then it intersects the stable leaf $W^s(x)$ in infinitely many points by \lemref{twice}. To give an order on these intersection points, we denote by $\{x_s^i\}_{i\in \Z}$ all points in $\mF^c(x)\cap W^s(x)$ satisfying that 
\begin{itemize}
	\item $x_s^0=x$;
	\item the center segment $[x_s^i, x_s^{i+1}]_c$ between $x_s^i$ and $x_s^{i+1}$ does not contain any other points in $\mF^c(x)\cap W^s(x)$;
	\item the stable segment $s_{x_s}(x_s^i)$ between $x_s$ and $x_s^i$ belongs to the stable segment $s_{x_s}(x_s^j)$ between $x_s$ and $x_s^j$ for any $j>i$.
\end{itemize}
For each $i\in \Z$, the stable segment $s_{x_s}(x_s^{i+1})$, center segment $[x_s^i, x_s^{i+1}]_c$ and $\gamma$ enclose a compact connected region in $\mF^{cs}(x)$, denoted by $s_{\gamma}(x_s^i)$. In particular, we denote by $s_{\gamma}(x)\colon=s_{\gamma}(x_s^0)$. Analogously, we define a sequence of nested compact connected regions $u_{\gamma}(x_u^i)$ in the leaf $\mF^{cu}(x)$ for points $\{x_u^i\}\subset \mF^c(x)\cap W^u(x)$ and $u_{\gamma}(x)\colon=u_{\gamma}(x_u^0)$. 

When the center leaf $\mF^c(x)$ is compact, the intersection points $\{x_s^i\}$ are actually the exact single point $x$ by the same proof of \lemref{centersaturated}. In this case, one can still discuss about the homoclinic intersection center and the intersection number.

\begin{defn}(Homoclinic intersection center)
	Let $f:M\rightarrow M$ be a partially hyperbolic diffeomorphism with two complete invariant foliations $\mF^{cs}$ and $\mF^{cu}$. For an $f$-periodic compact center leaf $\gamma$, we say that a center leaf $c\subset W^s(\gamma)\cap W^u(\gamma)\backslash\{\gamma\}$ is an \emph{$N$ homoclinic center} of $\gamma$ if there is a point $x\in c$ such that the compact regions $s_{\gamma}(x)$ and $u_{\gamma}(x)$ defined above intersect in $N$ connected center curves. 
	We define a function $n: W^s(\gamma)\cap W^u(\gamma)\backslash\{\gamma\}\rightarrow \N$ such that $c$ is an $n(c)$ homoclinic center of $\gamma$. We call the integer $n(c)$ the \emph{homoclinic intersection number} of $c$ with respect to $\gamma$.
\end{defn}

As shown in \lemref{centersaturated}, the intersection $W^s(\gamma)\cap W^u(\gamma)$ is center saturated. One can easily see that the definition above is independent of the choice of $x$ in the center leaf $c$. In other words, the homoclinic intersection number $n(c)$ will not change if we replace the point $x\in c$ by any other point in $c$. So, we can define this number in the set of center leaves.

Besides, one can see in \lemref{centersaturated} that the notion above is actually well-defined for any center leaf in $\mF^{cs}(\gamma)\cap \mF^{cu}(\gamma)$ which is, a priori, a larger set containing $W^s(\gamma)\cap W^u(\gamma)$ if we do not assume the completeness of two invariant foliations.

\begin{lem}\label{f-inv}
	Let $\gamma$ be an $f$-periodic compact center leaf. The homoclinic intersection number is a constant in the orbit of any leaf $c\in \mF^{cs}(\gamma)\cap \mF^{cu}(\gamma)$, that is, $n(c)=n(f^n(c))$ for any $n\in\Z$.
\end{lem}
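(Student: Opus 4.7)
The plan is to exploit the fact that $f$ is a diffeomorphism preserving every foliation that appears in the definition of the homoclinic intersection number. Since $f$ sends leaves of $\mF^c$, $\mF^{cs}$, $\mF^{cu}$, $W^s$ and $W^u$ to leaves of the same foliation, it carries the configuration at $(\gamma,x)$ bijectively to the configuration at $(f(\gamma),f(x))$, where $f(\gamma)$ is again an $f$-periodic compact center leaf (with the same period), and $f(c)=\mF^c(f(x))\in \mF^{cs}(f(\gamma))\cap \mF^{cu}(f(\gamma))$.

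First I would verify that the indexed families $\{x_s^i\}$ and $\{x_u^i\}$ used to define $s_\gamma(x)$ and $u_\gamma(x)$ are $f$-equivariant, that is, $f(x_s^i)=(f(x))_s^i$ and $f(x_u^i)=(f(x))_u^i$. The index zero is trivial since $f(x_s^0)=f(x)=(f(x))_s^0$; the consecutivity condition along $\mF^c(x)\cap W^s(x)$ is preserved because $f$ restricts to a homeomorphism from $\mF^c(x)\cap W^s(x)$ onto $\mF^c(f(x))\cap W^s(f(x))$; and the nesting condition on stable segments emanating from $x_s$ is preserved because $f$ sends $s_{x_s}(x_s^i)$ to $s_{f(x_s)}(f(x_s^i))$ and containment of connected segments is a homeomorphism invariant (note also that $f(x_s)=f(\gamma)\cap W^s(f(x))=(f(x))_s$).

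With equivariance of the labels in hand, the compact region $s_\gamma(x)$, whose boundary consists of a portion of $\gamma$, of the stable segment $s_{x_s}(x_s^1)$, and of the center segment $[x_s^0,x_s^1]_c$, is mapped by $f$ onto the region bounded by the corresponding portion of $f(\gamma)$, the stable segment $s_{(f(x))_s}((f(x))_s^1)$, and the center segment $[(f(x))_s^0,(f(x))_s^1]_c$; that is, $f(s_\gamma(x))=s_{f(\gamma)}(f(x))$. The analogous identity holds for $u_\gamma(x)$. Consequently
$$
f\bigl(s_\gamma(x)\cap u_\gamma(x)\bigr)=s_{f(\gamma)}(f(x))\cap u_{f(\gamma)}(f(x)),
$$
and since $f$ is a homeomorphism preserving the center foliation, it maps connected center curves to connected center curves bijectively, so the count of connected components of the intersection is unchanged. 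Therefore $n(c)=n(f(c))$, and iterating (applying the same argument to $f^{-1}$ for negative indices) gives $n(c)=n(f^n(c))$ for every $n\in\Z$.

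The argument is essentially a bookkeeping check and I do not expect a genuine obstacle; the only point that requires care is confirming that every ingredient entering the definitions of $s_\gamma(x)$ and $u_\gamma(x)$ — the leaf $\gamma$, the indexing of the points $\{x_s^i\}$, $\{x_u^i\}$, the segments and the enclosed region — is $f$-equivariant, which follows immediately from the invariance of the relevant foliations.
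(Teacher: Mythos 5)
Your proof is correct and follows essentially the same route as the paper, which simply invokes the $f$-invariance of the center, stable, and unstable foliations to get $f^n(s_\gamma(x))=s_{f^n(\gamma)}(f^n(x))$ and $f^n(u_\gamma(x))=u_{f^n(\gamma)}(f^n(x))$ and concludes. Your version just spells out the equivariance bookkeeping that the paper leaves implicit.
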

\begin{proof}
	It directly follows from the $f$-invariance of center and stable foliations. More precisely, for any center leaf $c\in\mF^{cs}(\gamma)\cap \mF^{cu}(\gamma)$ and any point $x\in c$, we have $s_{f^n(\gamma)}(f^n(x))=f^n(s_{\gamma}(x))$ and $u_{f^n(\gamma)}(f^n(x))=f^n(u_{\gamma}(x))$ for any $n\in \Z$. Thus, we have $n(c)=n(f^n(c))$ for any $n\in\Z$.
\end{proof}

Note that in the definition and lemma above, we allow the situation that there are compact center leaves in $\mF^{cs}(\gamma)$ or $\cap\mF^{cu}(\gamma)$. However, in the sequal discussion, we are going to ignore the existence of compact leaves in $\mF^{cs}(\gamma)\cup\mF^{cu}(\gamma)$. The following result can be deduced by adapting the same argument as in \cite[Proposition 6.8]{BZ20}. We provide a sketch here for the sake of completeness.

\begin{prop}\label{finitecenter}
	Let $f:M\rightarrow M$ be a partially hyperbolic diffeomorphism with two complete invariant foliations $\mF^{cs}$ and $\mF^{cu}$ and $\gamma$ be an $f$-periodic compact center leaf. Assume that there is no other compact center leaves in $\mF^{cs}(\gamma)\cup\mF^{cu}(\gamma)$. Then, for each $N\in \N$, there are finitely many center leaves whose homoclinic intersection numbers are bounded by $N$.
\end{prop}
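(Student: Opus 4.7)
The plan is to adapt the Bonatti--Langevin finiteness argument for $N$-homoclinic orbits of hyperbolic surface saddles, as implemented in \cite[Proposition 6.8]{BZ20} for the topologically neutral center, to the present setting of quasi-isometric center and a periodic compact center leaf $\gamma$. After replacing $f$ by a power so that $f(\gamma)=\gamma$ and passing to a finite cover, \corref{cylinder/plane} allows us to assume that $\mF^{cs}(\gamma)$ and $\mF^{cu}(\gamma)$ are orientable cylinders; combined with the hypothesis, every center leaf in $\mF^{cs}(\gamma)\cup\mF^{cu}(\gamma)$ other than $\gamma$ is non-compact. Fix a small $\epsilon>0$ and set $\Delta^s:=\{y\in W^s(\gamma):d_s(y,\gamma)\in[\lambda\epsilon,\epsilon]\}$, where $\lambda<1$ bounds the per-iterate stable contraction rate; this is a compact fundamental annulus for the $f$-action on $W^s(\gamma)\setminus\gamma$. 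Define $\Delta^u\subset W^u(\gamma)\setminus\gamma$ analogously using the unstable expansion.

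Next, suppose for contradiction that there are infinitely many distinct center leaves $c_k\subset W^s(\gamma)\cap W^u(\gamma)\setminus\{\gamma\}$ with $n(c_k)\leq N$. Since the homoclinic intersection number is $f$-invariant (\lemref{f-inv}), we may replace each $c_k$ by an iterate and pick $x_k\in c_k$ with $x_{k,s}:=\gamma\cap W^s(x_k)\in\Delta^s$. Combining the quasi-isometric control on iterates of center segments (\defref{QIdef-leaf}, \lemref{QIleaf=}) with uniform hyperbolicity along $W^s$ and $W^u$, the constraint $n(c_k)\leq N$ bounds the combinatorial complexity of $s_\gamma(x_k)$ and $u_\gamma(x_k)$, keeping the dual unstable projection $x_{k,u}:=\gamma\cap W^u(x_k)$ inside a compact collar $\bigcup_{j=0}^{J(N)}f^{-j}(\Delta^u)$ of $\gamma$ in $W^u(\gamma)$. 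Extracting a convergent subsequence, we obtain $x_k\to x_\infty\in\overline{\Delta^s}$ and a limit center leaf $c_\infty:=\mF^c(x_\infty)$; since $x_\infty$ is bounded away from $\gamma$ we have $c_\infty\neq\gamma$, and then by hypothesis $c_\infty$ is non-compact.

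The main obstacle is to derive a contradiction from this accumulation. Continuity of the bundles $E^s,E^u,E^c$ together with completeness of $\mF^{cs}$ and $\mF^{cu}$ (\propref{complete}) forces $c_\infty\subset W^s(\gamma)\cap W^u(\gamma)$ with $n(c_\infty)\leq N$, so $c_\infty$ is itself a homoclinic center leaf of $\gamma$. Each of the $n(c_k)$ center-curve components of $s_\gamma(x_k)\cap u_\gamma(x_k)$ is a locally transverse intersection of the two $2$-surfaces $\mF^{cs}(\gamma)$ and $\mF^{cu}(\gamma)$ along $E^c$; transversality being an open condition, the combinatorial intersection pattern is locally constant in the center-foliation direction near $c_\infty$. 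This would force a whole transversal disk to $c_\infty$ to carry at most a single homoclinic center leaf near $c_\infty$, contradicting the accumulation of infinitely many distinct $c_k$. The delicate point is to prevent any intersection component of $s_\gamma(x_k)\cap u_\gamma(x_k)$ from escaping to infinity or splitting as $k\to\infty$, and to rule out new components emerging at the boundary of these regions; here the quasi-isometric bound on the center-curve boundaries of $s_\gamma$ and $u_\gamma$, combined with the absence of other compact center leaves in $\mF^{cs}(\gamma)\cup\mF^{cu}(\gamma)$, is what keeps all the boundary data uniformly controlled.
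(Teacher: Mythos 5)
Your proposal takes a genuinely different route from the paper, and it has a gap at its core. The paper does not argue by compactness and accumulation at all: it picks a center leaf $c$ realizing the \emph{largest} homoclinic intersection number $n(c)\le N$, and shows by a nestedness argument that any other leaf $c'$ with $n(c')\le N$ must meet the single compact region $s_{\gamma}(x)\cap u_{\gamma}(x_u^1)$ — if a representative point $y\in c'$ lay outside $u_{\gamma}(x_u^1)$, the inclusions $s_{\gamma}(x)\subset s_{\gamma}(y)$ and $u_{\gamma}(x)\subset u_{\gamma}(y)$ would force $n(c')\ge n(c)+1$, contradicting maximality. Finiteness then follows from transversality of $\mF^{cs}$ and $\mF^{cu}$ on that one compact set. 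Your argument instead normalizes each $c_k$ into a fundamental annulus $\Delta^s$ and tries to pass to a limit leaf $c_\infty$.

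Two steps of your scheme are asserted rather than proved, and they are precisely where the content lies. First, the claim that $n(c_k)\le N$ ``keeps the dual unstable projection $x_{k,u}$ inside a compact collar $\bigcup_{j=0}^{J(N)}f^{-j}(\Delta^u)$'' is exactly the statement that leaves with bounded homoclinic number are confined to a compact region of $W^s(\gamma)\cap W^u(\gamma)$ after normalization; this is the heart of the proposition, it is what the paper's maximality argument establishes, and you give no mechanism linking the integer $n(c_k)$ to the unstable depth of $x_k$. Second, the final contradiction — that local constancy of the transverse intersection pattern near $c_\infty$ forbids accumulation of infinitely many distinct homoclinic leaves — does not hold as stated: homoclinic center leaves of $\gamma$ do accumulate on one another (they accumulate on $\gamma$ itself, for instance), only with unbounded intersection numbers; moreover the regions $s_\gamma(x_k)$ and $u_\gamma(x_k)$ contain the leaf $c_k$ itself in their boundary, so their intersection pattern is not a transversality-stable datum as $c_k$ varies. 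You flag the ``delicate point'' of components escaping, splitting, or being created at the boundary, but resolving it is essentially equivalent to redoing the nested-region monotonicity argument; I would replace the compactness scheme by that maximality argument outright.
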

\begin{proof}
	For any $N\in \N$, we let $c\in \mF^{cs}(\gamma)\cap\mF^{cu}(\gamma)\backslash\{\gamma\}$ be a center leaf so that $n(c)$ is the largest homoclinic intersection number bounded by $N$. Let $c'\in \mF^{cs}(\gamma)\cap\mF^{cu}(\gamma)\backslash\{\gamma\}$ be any center leaf of $n(c')\leq N$ distinct from $c$. As there is no compact leaves in $\mF^{cs}(\gamma)\cup\mF^{cu}(\gamma)$ unless $\gamma$, by the Poincar\'{e}-Bendixon Theorem, both $c$ and $c'$ accumulate in the center leaf $\gamma$. Take a point $x\in c$ and a point $y\in c'$. Since the homoclinic intersection number is independent of the choice of points, we can assume that $y$ is contained in the stable segment $s_{x_s}(x_s^1)\backslash s_{x_s}(x)$ between $x=x_s^0$ and $x_s^1$, which implies that $s_{\gamma}(x)\subset s_{\gamma}(y)$.
	
	We claim that the point $y$ must be contained in $u_{\gamma}(x_u^1)$. Otherwise, if $y$ is outside of $u_{\gamma}(x_u^1)$, then by definition we have that $u_{\gamma}(x)\subset u_{\gamma}(y)$. It implies that $s_{\gamma}(y)\cap u_{\gamma}(y)$ has more connected center curves than $s_{\gamma}(x)\cap u_{\gamma}(x)$ since $c\neq c'$.In other words, we have $n(c')\geq n(c)+1$, which contradicts to our choice of $c$. 
	
	Thus, the point $y$ is contained in the intersection $s_{\gamma}(x)\cap u_{\gamma}(x_u^1)$. As both $s_{\gamma}(x)$ and $u_{\gamma}(x_u^1)$ are compact, by the transversality of $\mF^{cs}$ and $\mF^{cu}$, there are finitely many connected center curves. We finish the proof.
\end{proof}


\section{Classification under transitivity}\label{section-classify-transitive}

In this section, we are going to present a classification of 3-dimensional transitive partially hyperbolic diffeomorphisms with quasi-isometric center, addressing the original Pujals conjecture. 

The following theorem is the main result of this section:
\begin{thm}\label{classify_transitive}
	Let $f: M^3\rightarrow M^3$ be a $C^1$ transitive partially hyperbolic diffeomorphism of a closed 3-manifold with quasi-isometric center. Then, either 
	\begin{itemize}
		\item up to a finite lift and iterate, $f$ is conjugate to a skew product over an Anosov automorphism on $\mathbb{T}^2$; or
		\item $f$ admits an iterate that is a discretized Anosov flow.
	\end{itemize}
\end{thm}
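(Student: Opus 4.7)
The plan is a case analysis, first on the global topology of $\mathcal{F}^{cs}$ and, within the cylinder case, on whether every center leaf is compact. Throughout I use that $f$ is dynamically coherent with complete invariant foliations $\mathcal{F}^c$, $\mathcal{F}^{cs}$, $\mathcal{F}^{cu}$ (Proposition \ref{integrable} together with Proposition \ref{complete}), and that each leaf of $\mathcal{F}^{cs}$ is, after a double cover, either a plane or a cylinder, with the cylinder leaves being precisely those containing a compact center leaf (Corollary \ref{cylinder/plane}). First I would dispatch the case in which every leaf of $\mathcal{F}^{cs}$ is a plane: Theorem \ref{rosenberg} then forces $M \cong \mathbb{T}^3$, and Hammerlindl's classification of partially hyperbolic diffeomorphisms on $\mathbb{T}^3$ under the quasi-isometric center hypothesis (cf.\ \cite{Hammerlindl13}) yields, after a finite lift and iterate, a conjugacy of $f$ to an algebraic skew product over an Anosov automorphism of $\mathbb{T}^2$, giving the first alternative.

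Otherwise some leaf of $\mathcal{F}^{cs}$ is a cylinder, and Proposition \ref{periodiccompact} supplies a periodic compact center leaf $\gamma$, which I take to be fixed after replacing $f$ by an iterate. Set $\mathcal{C} := \{ x \in M : \mathcal{F}^c(x) \text{ is compact} \}$; this is a closed, $f$-invariant subset of $M$ containing $\gamma$. The theorem splits into the sub-cases $\mathcal{C} = M$ and $\mathcal{C} \subsetneq M$. If $\mathcal{C} = M$, then $\mathcal{F}^c$ is a foliation of $M$ by circles; invoking the classification of uniformly compact center foliations in dimension three (Bohnet--Bonatti and Carrasco \cite{Carrasco15compact}) together with the hyperbolicity of $E^s \oplus E^u$, the quotient $M/\mathcal{F}^c$ is a closed $2$-orbifold carrying an Anosov homeomorphism induced by $f$; transitivity forces this quotient to be $\mathbb{T}^2$ up to finite cover, and after a finite lift and iterate $f$ is conjugate to a skew product over an Anosov automorphism on $\mathbb{T}^2$, again the first alternative.

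The main case is $\mathcal{C} \subsetneq M$, where the goal is to show that an iterate of $f$ is a discretized Anosov flow. My plan here is: (i) combine Lemmas \ref{twice} and \ref{centersaturated} with the homoclinic bookkeeping of Proposition \ref{finitecenter} and the QI property to show that compact center leaves are isolated inside each cylinder leaf of $\mathcal{F}^{cs}$ and $\mathcal{F}^{cu}$ containing them; (ii) use QI to bound the center displacement $d_c(x, f(x))$ uniformly for $x$ lying on a non-compact center leaf in a neighborhood of $\gamma$, exploiting the finiteness of orbits of center leaves with bounded homoclinic intersection number; (iii) propagate this local uniform bound globally via transitivity, obtaining a single $L > 0$ such that $f(x) \in \mathcal{F}^c_L(x)$ for every $x \in M$; (iv) conclude via the standard DAF characterization in \cite{BFFP1, Martinchich23}, verifying that the underlying flow is expansive and hence a topological Anosov flow by the Inaba--Matsumoto--Paternain theorem.

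The principal obstacle will be step (iii): promoting the local uniform bound on center displacement near $\gamma$ to a global bound on $M$. In the topologically neutral setting of \cite{BZ20} this followed from strong rigidity of the center action, a tool not available under the strictly weaker QI hypothesis. I expect to use transitivity crucially to transport the picture around $\gamma$ throughout $M$, while leveraging the completeness of $\mathcal{F}^{cs}$ and $\mathcal{F}^{cu}$ together with Proposition \ref{finitecenter} to rule out unbounded center drift along non-compact center leaves that accumulate on $\gamma$. Verifying that the resulting candidate flow is expansive, and thus a genuine topological Anosov flow, is the other delicate point.
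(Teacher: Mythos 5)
Your overall skeleton (reduce to a periodic compact center leaf $\gamma$, then split into a ``skew product'' case and a ``discretized Anosov flow'' case) is in the right spirit, but the case division you chose does not match the hypotheses of the tools you plan to use, and the core of the DAF case is left unproved. The paper's dichotomy is local: either $\mF^{cs}(\gamma)\cup\mF^{cu}(\gamma)$ contains a compact center leaf other than $\gamma$ (in which case transitivity produces a compact center leaf in $W^s_{\delta}(\gamma)\cap W^u_{\delta}(\gamma)\setminus\{\gamma\}$ and the first item of the Bonatti--Wilkinson criterion, \thmref{BW}, gives the skew product), or it does not (in which case \propref{finitecenter} applies). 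Your dichotomy $\mathcal{C}=M$ versus $\mathcal{C}\subsetneq M$ does not reproduce this: when $\mathcal{C}\subsetneq M$ there may a priori still be a second compact center leaf in $\mF^{cs}(\gamma)$, and \propref{finitecenter} explicitly \emph{assumes} there is none, so your step (i) (``compact center leaves are isolated'') is not enough to invoke it. You would either have to rule out that configuration or, as the paper does, send it to the skew product alternative.

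The deeper gap is in steps (ii)--(iv). Bounding the ``center displacement $d_c(x,f(x))$'' presupposes that $f(x)$ lies on the center leaf $\mF^c(x)$, i.e.\ that $f$ (or an iterate) fixes every center leaf; nothing in your plan establishes this, and it is precisely the hard part. The paper gets it by combining the $f$-invariance of the homoclinic intersection number (\lemref{f-inv}) with its finiteness for each bound $N$ (\propref{finitecenter}) to conclude that every center leaf in $\mF^{cs}(\gamma)\cap\mF^{cu}(\gamma)$ is periodic, then uses transitivity (density of that intersection) and the circle leaf space of $\mF^{cs}(\gamma)$ to upgrade this to periodicity of \emph{all} center leaves in $\mF^{cs}(\gamma)$ of a common period, at which point the second item of \thmref{BW} hands you the transitive expansive flow --- no global displacement bound has to be propagated by hand, which is exactly the step you flag as your ``principal obstacle'' and do not resolve. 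Finally, two smaller points: in the all-planes case your appeal to Hammerlindl is fine (the paper uses the same input inside \propref{periodiccompact}, where it yields a contradiction showing the case is vacuous), but in the case $\mathcal{C}=M$ you should justify that all center leaves compact implies \emph{uniformly} compact before invoking a classification of uniformly compact center foliations, and you should note that $\mathcal{C}$ being closed is not automatic.
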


\begin{rmk}
	As mentioned before, the classification result in \thmref{classify_transitive} is valid for both central quasi-isometries given by \defref{QIdef-leaf} and \defref{QIdef-curve}. Here, for the sake of simplicity, we will only present the proof of \thmref{classify_transitive} for quasi-isometric center given by \defref{QIdef-curve}, and the other one follows by exactly the same argument.
\end{rmk}

Our strategy for proving the theorem above is to utilize the following result in the context of a partially hyperbolic diffeomorphism with quasi-isometric center.

\begin{thm}\cite{BW05}\label{BW}
	Let $f: M\rightarrow M$ be a $C^1$ dynamically coherent partially hyperbolic diffeomorphism of a closed 3-manifold. 
	\begin{itemize}
		\item Assume that $f$ is transitive. If there exists a compact periodic center leaf $\gamma\in\mF^c$ and a positive number $\delta>0$ such that $W^s_{\delta}(\gamma)\cap W^u_{\delta}(\gamma)\backslash\{\gamma\}$ contains a compact center leaf, then up to a finite lift and iterate, $f$ is conjugate to a skew product over a linear Anosov of the torus.
		\item If there exists a compact periodic center leaf $\gamma\in \mF^c$ such that every center leaf in $W^s_{loc}(\gamma)$ is $f$-periodic, then all center leaves are $f$-periodic of a common period and the center foliation supports a continuous flow conjugate to a transitive expansive flow.
	\end{itemize}
\end{thm}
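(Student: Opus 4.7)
The plan is to treat the two items separately, each beginning with a local analysis near the compact periodic center leaf $\gamma$ and then propagating globally using the invariant foliations and their completeness.

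For item (1), the first step is to apply Corollary \ref{cylinder/plane} to conclude that $\mathcal{F}^{cs}(\gamma)$ and $\mathcal{F}^{cu}(\gamma)$ are cylinders, so that the second compact center leaf $\gamma' \subset W^s_\delta(\gamma) \cap W^u_\delta(\gamma) \setminus \{\gamma\}$ is an essential loop in each of them. Together with short stable and unstable arcs joining $\gamma$ to $\gamma'$, the two leaves bound a compact product rectangle, and a Poincar\'e--Bendixson argument inside it forces every center leaf meeting the rectangle to be compact. Hence the set $A \colon= \{x \in M : \mathcal{F}^c(x) \text{ is compact}\}$ is open at $\gamma$, and by the same argument applied at any compact center leaf having a nearby compact neighbor, it is open everywhere. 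Closedness of $A$ follows from the local product structure combined with uniform control on the length of accumulated center leaves. Transitivity of $f$ then forces $A = M$, so the center foliation is uniformly compact. Standard arguments now produce a Seifert-like circle fibration $\pi \colon M \to N$ onto a closed surface, on which the induced map $\bar f$ inherits the hyperbolic splitting from $E^s \oplus E^u$ and is therefore Anosov; this forces $N = \mathbb{T}^2$ and $\bar f$ conjugate to a linear Anosov automorphism. A finite lift and iterate trivialize the circle bundle, yielding the skew-product conjugacy.

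For item (2), let $N_0$ denote the period of $\gamma$. After possibly passing to a further iterate to align orientations, $f^{N_0}$ fixes every center leaf in $W^s_{\mathrm{loc}}(\gamma)$ setwise. I would first extend this to the whole leaf $\mathcal{F}^{cs}(\gamma) = W^s(\gamma)$: by stable contraction, every center leaf in $W^s(\gamma)$ eventually enters $W^s_{\mathrm{loc}}(\gamma)$ under forward iteration, and is therefore $f^{N_0}$-periodic. Next, using completeness of $\mathcal{F}^{cs}$ and $\mathcal{F}^{cu}$ from Proposition \ref{complete}, I would propagate to the entire manifold: the set of $f^{N_0}$-periodic center leaves is stable- and unstable-saturated, and is both open and closed, hence all of $M$. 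Once every center leaf is $f^{N_0}$-periodic with common period, I parametrize each leaf by a continuous angular coordinate to obtain a continuous flow $\phi_t$ whose orbits are exactly the center leaves. Expansiveness of $\phi_t$ then follows from the transverse hyperbolicity of $E^s \oplus E^u$, which guarantees that two nearby orbits not in the same flow class separate under iteration along either the stable or unstable direction; transitivity of the flow follows from the recurrence of $f$ and the density of typical unstable leaves.

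The main obstacle is in item (2): the extension of $f^{N_0}$-periodicity from $\mathcal{F}^{cs}(\gamma)$ to the entire manifold via unstable holonomy. This requires that moving along an unstable leaf to a new center-stable leaf does not alter the period, and rests crucially on the completeness of the center-stable foliation (so that unstable holonomy is globally well-defined) together with the rigidity coming from the absence of compact stable or unstable leaves in a partially hyperbolic system. In item (1), the analogous difficulty lies in upgrading the uniformly compact center foliation to a genuine fibration and in eliminating orientation and monodromy obstructions by a finite cover and iterate.
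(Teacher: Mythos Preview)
The paper does not prove this theorem at all: it is quoted from \cite{BW05} (Bonatti--Wilkinson) and used as a black box in the proofs of \propref{skewproduct}, \propref{DAF}, and \thmref{classify_transitive}. There is therefore no ``paper's own proof'' to compare your proposal against.

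That said, your sketch has genuine gaps as a proof of the statement as written. You invoke \corref{cylinder/plane} and \propref{complete}, but both of those results in this paper are proved \emph{under the hypothesis of quasi-isometric center}, whereas \thmref{BW} is stated for an arbitrary dynamically coherent partially hyperbolic diffeomorphism. Completeness of $\mF^{cs}$ and $\mF^{cu}$ is not automatic in that generality, and in \cite{BW05} it is established by a separate argument under their own hypotheses; you cannot simply import it. Without completeness, your propagation step in item (2) --- spreading periodicity from $\mF^{cs}(\gamma)$ to all of $M$ via unstable holonomy --- does not go through.

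There is also a specific error in item (2): you write that you will ``parametrize each leaf by a continuous angular coordinate to obtain a continuous flow''. This presumes every center leaf is a circle, but the hypothesis only says $\gamma$ is compact; the remaining center leaves in $W^s_{loc}(\gamma)$ are merely $f$-periodic as sets and may well be lines (as they are for a discretized Anosov flow). The flow $\phi_t$ in the conclusion is a flow whose orbits are the center leaves, constructed from a non-singular vector field tangent to $E^c$, not from an angular coordinate.
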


We would like to establish some technical assumptions on the existence of particular compact center leaves to classify partially hyperbolic diffeomorphisms with quasi-isometric center.
\begin{prop}\label{skewproduct}
	Let $f: M\rightarrow M$ be a transitive partially hyperbolic diffeomorphism with quasi-isometric center of a closed 3-manifold. Assume that $\gamma$ is an $f$-periodic compact center leaf. If there is a compact center leaf in $\mF^{cs}(\gamma)\backslash\{\gamma\}$ or $\mF^{cu}(\gamma)\backslash\{\gamma\}$, then up to a finite lift and iterate, $f$ is conjugate to a skew product over a linear Anosov automorphism on the torus.
\end{prop}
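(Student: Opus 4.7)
The strategy is to verify the hypothesis of the first item of \thmref{BW} by producing a compact center leaf contained in $W^s(\gamma) \cap W^u(\gamma) \setminus \{\gamma\}$. By the completeness of the invariant foliations (\propref{complete}), $W^s(\gamma) = \mF^{cs}(\gamma)$ and $W^u(\gamma) = \mF^{cu}(\gamma)$; moreover, any compact center leaf $c^{*}$ contained in $W^s(\gamma)\cap W^u(\gamma)$ has uniformly bounded intrinsic stable and unstable distances from $\gamma$, so it automatically lies in $W^s_\delta(\gamma)\cap W^u_\delta(\gamma)$ for some $\delta>0$. Thus it suffices to exhibit a compact center leaf homoclinic to $\gamma$.

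Without loss of generality, the given compact center leaf $c_0 \ne \gamma$ lies in $\mF^{cs}(\gamma)$; the case $c_0 \in \mF^{cu}(\gamma)$ is handled by applying the same argument to $f^{-1}$. Passing to an iterate, I would assume that $f(\gamma)=\gamma$ and that $f|_\gamma$ is orientation preserving. By \corref{cylinder/plane}, the leaf $\mF^{cs}(\gamma)$ is a cylinder containing the two distinct compact essential circles $\gamma$ and $c_0$. Using \lemref{QIleaf=}, the forward iterates $\{f^n(c_0)\}_{n\ge 0}$ are compact center leaves of uniformly bounded length, and since $f$ contracts stable distances while fixing $\gamma$, $f^n(c_0)\to\gamma$ in Hausdorff distance as $n\to\infty$. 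This produces a sequence of compact center leaves inside $\mF^{cs}(\gamma)$ accumulating on $\gamma$.

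The crux of the argument is to upgrade one such compact center leaf $f^n(c_0)\subset W^s(\gamma)$ to one that also lies in $W^u(\gamma)$. The key mechanism is that $W^u(\gamma) = \mF^{cu}(\gamma)$ is center-saturated by completeness, so any center leaf meeting $W^u(\gamma)$ in a single point is entirely contained in $W^u(\gamma)$. Hence it suffices to exhibit some $n\in\N$ and a point $y\in c_0$ such that $f^n(y)\in W^u(\gamma)$: then $f^n(c_0)\subset W^u(\gamma)$, while simultaneously $f^n(c_0)\subset W^s(\gamma)$, yielding the required compact center leaf homoclinic to $\gamma$. With such a leaf in hand, the first item of \thmref{BW} concludes that, up to a finite lift and iterate, $f$ is conjugate to a skew product over a linear Anosov automorphism on $\mathbb{T}^2$.

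The main technical obstacle is producing the intersection point $f^n(y)\in W^u(\gamma)$. Transitivity of $f$ yields density of $W^u(\gamma)$ in $M$, but density of a two-dimensional immersed surface does not automatically force intersection with the one-dimensional curve $f^n(c_0)$. To surmount this, I would exploit the bifoliated structure of the $cs$-cylinder $\mF^{cs}(\gamma)$ together with the rigidity imposed by the coexistence of the two distinct compact center leaves $\gamma$ and $c_0$ (through Poincar\'e--Bendixson-type considerations within the annulus they bound), combined with the local product structure near $\gamma$ and a recurrence argument using the transitivity of $f$, so as to realize an orbit that hits $W^u_{\mathrm{loc}}(\gamma)$ at a time that is a multiple of the period of $\gamma$.
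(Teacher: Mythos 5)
Your overall architecture matches the paper's: reduce to the first item of \thmref{BW} by exhibiting a compact center leaf in $W^s(\gamma)\cap W^u(\gamma)\setminus\{\gamma\}$, using completeness to identify $W^s(\gamma)=\mF^{cs}(\gamma)$ and $W^u(\gamma)=\mF^{cu}(\gamma)$ and center-saturation to upgrade a single intersection point to a whole center leaf. However, the step you yourself flag as "the main technical obstacle" is precisely the content of the proposition, and your sketch for it does not close the gap. Worse, the specific route you propose --- forcing some $f^n(y)$, $y\in c_0$, into $W^u(\gamma)$ by letting $f^n(c_0)$ Hausdorff-converge to $\gamma$ --- runs into a concrete obstruction: near $\gamma$ the local unstable saturate $W^u_{\mathrm{loc}}(\gamma)$ meets the leaf $\mF^{cs}(\gamma)$ only along $\gamma$ itself, so the circles $f^n(c_0)\subset\mF^{cs}(\gamma)\setminus\gamma$ never intersect $W^u_{\mathrm{loc}}(\gamma)$ no matter how close they get. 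Any intersection must come from a \emph{global return} of $W^u(\gamma)$ to a neighborhood of $\gamma$, and transitivity alone gives you a single dense orbit (a residual, $u$-saturated set of such points), not recurrence of the specific one-dimensional circle $c_0$; this is exactly what remains unproved in your plan.

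The paper's mechanism is different and avoids this issue. It works with the compact annulus $U\subset\mF^{cs}(\gamma)$ bounded by $\gamma$ and $c_0$, and picks a point $x$ in $U$ with dense \emph{backward} orbit (such a point exists in $U$ because the set of points with $\alpha(x)=M$ is residual and $u$-saturated, hence has residual trace on any $cs$-disk). Pulling back, $f^{-N}(x)$ lands $\epsilon$-close to $\gamma$, and by transversality the local plaque $\mF^{cs}_{loc}(f^{-N}(x))\subset f^{-N}(U)$ meets $\mF^{cu}_{loc}(\gamma)$ in a point $y$. Thus $y\in f^{-N}(U)\cap\mF^{cu}(\gamma)$ without ever needing the orbit of $c_0$ to hit anything. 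Then Poincar\'e--Bendixson inside the annulus $f^{-N}(U)$ shows the center leaf $\mF^c(y)$ (which lies in $\mF^{cu}(\gamma)$ by center-saturation) accumulates on the compact boundary leaf $f^{-N}(c_0)$, which is therefore a compact center leaf in $\mF^{cs}(\gamma)\cap\mF^{cu}(\gamma)\setminus\{\gamma\}$, and \thmref{BW} applies. If you want to salvage your write-up, replace your final paragraph with this argument; the preparatory reductions you make (passing to an iterate fixing $\gamma$, the cylinder structure from \corref{cylinder/plane}, and the bounded-length estimate from \lemref{QIleaf=}) are all consistent with the paper and can stay.
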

\begin{proof}
	We only need to show it in the case that there is another compact center leaf in the center stable leaf through $\gamma$, while the center unstable case follows symmetrically. Assume that there exists a compact center leaf $c\in \mF^{cs}(\gamma)\backslash\{\gamma\}$. Without loss of generality, we assume that $\gamma=f(\gamma)$ for simplicity. By \corref{cylinder/plane}, the leaf $\mF^{cs}(\gamma)$ is a cylinder and the leaves $\gamma$ and $c$ bound a compact region in $\mF^{cs}(\gamma)$, denoted by $U$. By the Poincar\'{e}-Bendixon Theorem, every center leaf $\mF^c(x)\subset U$ either is compact or accumulates in two compact center leaves in $U$. We can assume that $\gamma$ and $c$ belong to the closure $\overline{\mF^c(x)}$ for each $x\in U$. By transitivity of $f$, there is a point $x\in U$ with dense backward orbit, that is, its limit set is the whole manifold $\alpha(x)=M$. Then, for any small $\epsilon>0$, there exists $N_{\epsilon}\in \N$ such that $f^{-N_{\epsilon}}(x)$ is contained in the $\epsilon$-neighborhood $U_{\epsilon}(\gamma)$ of $\gamma$. By transversality, for $\epsilon$ sufficiently small, we have that $\mF^{cs}_{loc}(f^{-N_{\epsilon}}(x))\cap \mF^{cu}_{loc}(\gamma)\neq \emptyset$. Pick a point $y\in \mF^{cs}_{loc}(f^{-N_{\epsilon}}(x))\cap \mF^{cu}_{loc}(\gamma)$, which is contained in $f^{-N_{\epsilon}}(U)$. It implies that the closure of center leaf $\overline{\mF^c(y)}$ is a subset of $U$. Moreover, by the Poincar\'{e}-Bendixon Theorem, there is a compact center leaf in $W^u(\gamma)\backslash\gamma$ accumulated by $\mF^c(y)$, which is actually the leaf $f^{-N_{\epsilon}}(c)$. It turns out that $f^{-N_{\epsilon}}(c)$ is a compact center leaf in the intersection of $f^{-N_{\epsilon}}(U)\subset \mF^{cs}(\gamma)$ and a compact center unstable region in $\mF^{cu}(\gamma)$. Applying \thmref{BW}, we conclude that, up to a finite lift and iterate, $f$ is conjugate to a skew product. Thus, we finish the proof.
\end{proof}

\begin{prop}\label{DAF}
	Let $f: M\rightarrow M$ be a transitive partially hyperbolic diffeomorphism with quasi-isometric center of a closed 3-manifold. Assume that $\gamma$ is an $f$-periodic compact center leaf. If there is no compact center leaf in $\mF^{cs}(\gamma)\backslash\{\gamma\}$ nor $\mF^{cu}(\gamma)\backslash\{\gamma\}$, then an iterate of $f$ is a discretized Anosov flow.
\end{prop}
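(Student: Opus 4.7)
The plan is to invoke the second item of \thmref{BW}. Once its hypothesis is verified, \thmref{BW} furnishes a common period $N$ for all center leaves together with a continuous transitive expansive flow $\phi_t$ on $M$ whose orbits are the center leaves. Since the codimension-one foliations $\mF^{cs}$ and $\mF^{cu}$ are $f$-invariant and center-saturated, they are $\phi_t$-invariant, so the Inaba-Matsumoto-Paternain criterion cited in the excerpt upgrades $\phi_t$ to a topological Anosov flow. By construction $f^N$ preserves each orbit of $\phi_t$, so the displacement function $\tau$ defined by $f^N(x)=\phi_{\tau(x)}(x)$ is continuous and bounded on the compact manifold $M$, exhibiting $f^N$ as a discretized Anosov flow.

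To verify the hypothesis of \thmref{BW} (second item), I would first pass to an iterate fixing $\gamma$. By \corref{cylinder/plane} the leaf $\mF^{cs}(\gamma)$ is a cylinder, and by the standing assumption $\gamma$ is its unique compact center leaf; the Poincar\'{e}-Bendixon Theorem on this cylinder forces every other center leaf in $\mF^{cs}(\gamma)$ to accumulate on $\gamma$, and in particular to meet any given neighborhood $W^s_{loc}(\gamma)$. The analogous statement holds in $\mF^{cu}(\gamma)$, and by completeness (\propref{complete}) we have $\mF^{cs}(\gamma)=W^s(\gamma)$ and $\mF^{cu}(\gamma)=W^u(\gamma)$. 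For center leaves $c$ lying in the homoclinic set $W^s(\gamma)\cap W^u(\gamma)\setminus\{\gamma\}$, periodicity is immediate from \lemref{f-inv} and \propref{finitecenter}: the homoclinic intersection number $n(c)$ is finite and $f$-invariant, and only finitely many leaves share any given value of $n(\cdot)$, so the $f$-orbit of $c$ is finite and $c$ is $f$-periodic.

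The main obstacle is extending the periodicity conclusion from the homoclinic set to every center leaf passing through $W^s_{loc}(\gamma)$. A priori such a leaf $c\subset\mF^{cs}(\gamma)$ need not lie in $W^u(\gamma)$; in the Anosov flow analogue, a non-periodic orbit in the weak stable manifold of a periodic orbit has backward iterates escaping any fixed neighborhood of $\gamma$, so the homoclinic intersection number is not directly available for such leaves. To close this gap I plan to exploit the transitivity of $f$, which makes $W^s(\gamma)\cap W^u(\gamma)$ dense in $M$ and therefore dense inside the cylinder $\mF^{cs}(\gamma)$, supplying a dense subfamily of $f$-periodic center leaves there. Combining this density with the quasi-isometric control on center lengths and the standing hypothesis (which forbids any compact center leaf in $\mF^{cs}(\gamma)\cup\mF^{cu}(\gamma)$ other than $\gamma$) is intended to rule out a non-periodic leaf, upgrading dense periodicity to universal periodicity. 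This final upgrade is the most delicate technical point, and I expect the argument to mirror, in spirit, the topologically neutral analysis of \cite{BZ20}, adapted to the strictly weaker quasi-isometric framework.
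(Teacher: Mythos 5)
Your overall strategy coincides with the paper's: reduce to the second item of \thmref{BW}, establish periodicity of the center leaves in $\mF^{cs}(\gamma)\cap\mF^{cu}(\gamma)$ via \lemref{f-inv} and \propref{finitecenter} (finitely many leaves share each homoclinic intersection number, and the number is invariant, so each orbit of leaves is finite), and then use transitivity to get density of these periodic leaves in $\mF^{cs}(\gamma)$. Up to that point your argument is the paper's argument. But the step you yourself flag as ``the most delicate technical point'' --- upgrading dense periodicity of center leaves in $\mF^{cs}(\gamma)$ to periodicity of \emph{every} center leaf there --- is left as an intention rather than a proof, and the mechanism you gesture at (quasi-isometric length control plus the absence of other compact center leaves) is not the one that closes the argument. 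That is a genuine gap: without this step the hypothesis of \thmref{BW} is not verified.

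The paper closes it with a short soft argument that uses neither the quasi-isometry nor the no-compact-leaf hypothesis at this stage: the cylinder $\mF^{cs}(\gamma)$ is split by $\gamma$ into two components, and on each component the space of center leaves is homeomorphic to a circle, on which $f$ induces a homeomorphism $\hat f$. Density of the periodic center leaves means $\hat f$ has dense periodic points; a circle homeomorphism with a periodic point has rational rotation number $p/q$, its periodic set equals $\mathrm{Fix}(\hat f^{\,q})$, which is closed, and a closed dense set is everything. Hence every center leaf in $\mF^{cs}(\gamma)$ is $f$-periodic and \thmref{BW} applies. If you want to complete your write-up, this passage to the circle leaf space is the missing ingredient; your proposed route through length estimates would at best be a much longer detour and is not obviously sufficient on its own.
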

\begin{proof}
	By \propref{integrable} and \propref{complete}, the diffeomorphism $f$ with quasi-isometric center admits two invariant foliations $\mF^{cs}$ and $\mF^{cu}$, both of which are complete. As shown in \propref{finitecenter}, for each integer $N\in\N$, there are finitely many center leaves in $\mF^{cs}(\gamma)\cap\mF^{cu}(\gamma)\backslash\{\gamma\}$ whose homoclinic intersection numbers with respect to $\gamma$ are bounded by $N$. By \lemref{f-inv}, for each center leaf in $\mF^{cs}(\gamma)\cap\mF^{cu}(\gamma)\backslash\{\gamma\}$, the homoclinic intersection number is preserved under iterations. It turns out that every leaf of homoclinic intersection number less than $N$ is $f$-periodic. As $N$ is arbitrarily chosen, we conclude that every leaf in $\mF^{cs}(\gamma)\cap\mF^{cu}(\gamma)$ is $f$-periodic.
	
	Since $f$ is transitive, the intersection $\mF^{cs}(\gamma)\cap\mF^{cu}(\gamma)$ is dense in $\mF^{cs}(\gamma)$. Note that the leaf $\mF^{cs}(\gamma)$ is a cylinder by \corref{cylinder/plane} which splits into two connected components by the center leaf $\gamma$. For each component, the center leaf space $\mathcal{L}^s$ is homeomorphic to a circle on which the diffeomorphism $f$ induces a homeomorphism $\hat{f}: \mathcal{L}^s\rightarrow \mathcal{L}^s$. By the fact that every leaf in $\mF^{cs}(\gamma)\cap\mF^{cu}(\gamma)$ is $f$-periodic, the induced map $\hat{f}$ has dense periodic points. It turns out that every point in $\mathcal{L}^s$ is $\hat{f}$-periodic, and thus every center leaf in $\mF^{cs}(\gamma)$ is $f$-periodic. Therefore, we finish the proof by applying \thmref{BW}.
\end{proof}

Now, let us provide the proof of \thmref{classify_transitive}.

\begin{proof}[Proof of \thmref{classify_transitive}]
	Consider any transitive partially hyperbolic diffeomorphism $f: M\rightarrow M$ with quasi-isometric center of a closed 3-dimensional manifold. By \propref{integrable}, there exists two invariant foliation $\mF^{cs}$ and $\mF^{cu}$ tangent to $E^s\oplus E^c$ and $E^c\oplus E^u$, respectively. There exists a periodic compact center leaf as shown in \propref{periodiccompact}, denoted by $\gamma$. If the center stable leaf $\mF^{cs}(\gamma)$ of $\gamma$ contains another compact center leaf, then up to a finite lift and iterate, $f$ is conjugate to a skew product by \propref{skewproduct}. The same holds if the center unstable leaf $\mF^{cu}(\gamma)$ contains another compact center leaf. Otherwise, neither $\mF^{cs}(\gamma)\backslash\{\gamma\}$ nor $\mF^{cu}(\gamma)\backslash\{\gamma\}$ contains a compact center leaf, which implies that some iterate of $f$ is a discretized Anosov flow by \propref{DAF}. Thus, we complete the proof.
\end{proof}

\section{Accessibility and ergodicity in dimension three}\label{section-accessible}
In this section, we will investigate the accessbility and ergodicity of 3-dimensional partially hyperbolic diffeomorphisms. This study is independent of the preceding sections and holds its own interest.

Let $f: M\rightarrow M$ be a $C^1$ partially hyperbolic diffeomorphism of a closed 3-manifold. It is well-known that the extreme strong bundles $E^s$ and $E^u$ in the partially hyperbolic splitting are both uniquely integrable \cite{BrinPesin74, PughShub72}. In particular, there exist unique invariant foliations $\mF^s$ and $\mF^u$ tangent to $E^s$ and $E^u$, respectively, assembled by $C^1$ injectively immersed submanifolds.

A set is \emph{s-saturated (resp. u-saturated)} if it is a union of stable (resp. unstable) leaves over all points in it. We say that a set is \emph{su-saturated} if it is both s- and u-saturated. For any point $x\in M$, the accessibility class $AC(x)$ of $x$ is defined as the minimal su-saturated set containing $x$. In other words, any pair of points lying in the same accessibility class can be joined by an path piecewise tangent to $E^s$ and $E^u$. We say that $f$ is \emph{accessible} if there is only one accessibility class which is equal to the whole manifold $M$. We will denote by $\Gamma(f)$ the set of non-open accessibility classes, which is a compact invariant set laminated by accessibility classes, see also \cite{08invent} for more discussion on this set.

We are going to show the following result in this section.
\begin{thm}\label{accessible-complete}
	Let $f: M^3\rightarrow M^3$ be a partially hyperbolic diffeomorphism of a closed 3-manifold with $NW(f)=M$. Assume that $f$ admits two complete invariant foliations $\mF^{cs}$ and $\mF^{cu}$ and $\pi_1(M)$ is not virtually solvable. Then $f$ is accessible.
\end{thm}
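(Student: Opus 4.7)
The plan is to argue by contradiction: assume $f$ is not accessible, so the set $\Gamma(f)$ of non-open accessibility classes is a non-empty compact $f$-invariant lamination inside $M$. The goal is to exhibit an embedded $2$-torus tangent to $E^s\oplus E^u$; by the Hertz-Hertz-Ures classification of $3$-manifolds admitting $su$-tori \cite{2011TORI}, this forces $\pi_1(M)$ to be virtually solvable, contradicting the hypothesis.

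First, I would analyze the local structure of non-open accessibility classes. The completeness of $\mF^{cs}$ and $\mF^{cu}$ implies that the trace of any $L\in\Gamma(f)$ inside a center-stable leaf is a closed union of complete stable leaves (no stable leaf can ``escape'' within the center-stable leaf), and analogously for center-unstable leaves. Adapting the standard accessibility-class regularity arguments (see \cite{BW10annals,FP_hyperbolic,FU1}), one upgrades this to show that each $L\in\Gamma(f)$ is a $C^1$ injectively immersed surface tangent to $E^s\oplus E^u$, locally bi-foliated by stable and unstable leaves inside $M$.

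Next, I would use $NW(f)=M$ together with the $f$-invariance and compactness of $\Gamma(f)$ to extract an $f$-periodic compact accessibility class $T\subset\Gamma(f)$. Non-wandering points are dense, hence the lamination $\Gamma(f)$ itself admits recurrent leaves; a shadowing-type argument applied to the induced action on the leaf space of $\Gamma(f)$, in the spirit of \propref{Bowen} but with $su$-leaves replacing center leaves, then produces a periodic accessibility class. Completeness prevents such a periodic class from having infinite area in the limit, so that $T$ is compact. The compact surface $T$ carries the two transverse nowhere-vanishing line fields $E^s|_T$ and $E^u|_T$, so after at most a double cover $\chi(T)=0$ and $T$ is a $2$-torus, i.e.\ an $su$-torus.

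The main obstacle is the middle step: producing a compact periodic accessibility class from the non-wandering condition. Although non-wandering gives recurrence, translating this into periodicity for the lamination $\Gamma(f)$ requires controlling the diameters of accessibility classes under $f$-iteration in an appropriate topology; this is precisely where completeness of $\mF^{cs}$ and $\mF^{cu}$ enters, by ruling out degeneration of $su$-leaves through the center-stable and center-unstable slices. A secondary subtlety is the regularity upgrade of accessibility classes to genuine $C^1$ surfaces in the absence of a dissipative or volume-preserving assumption; one works purely at the topological and foliated level, which is why the hypothesis is framed in terms of complete invariant foliations rather than quasi-isometric center.
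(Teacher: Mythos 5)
Your strategy reduces everything to producing a compact periodic accessibility class $T$, which you then recognize as an $su$-torus. But this middle step is not merely the ``main obstacle'' you flag --- it is impossible in general, and the dichotomy is exactly backwards. The structure theorem of Hertz--Hertz--Ures (\thmref{lamination}, from \cite{2008nil}) says that for a non-accessible $f$ with $NW(f)=M$, either there \emph{already is} a compact accessibility class (necessarily an incompressible $su$-torus, which kills the theorem in one line via \thmref{Anosovtorus}), or else $\Gamma(f)$ is a lamination/foliation tangent to $E^s\oplus E^u$ \emph{without compact leaves}. In the second case no shadowing or recurrence argument can manufacture a compact leaf: the model to keep in mind is the weak-stable foliation of a suspension Anosov flow, where every leaf is dense and recurrent yet non-compact. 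The analogue of \propref{Bowen} you invoke requires uniform compactness of the leaves as a hypothesis; it finds a periodic compact leaf near a family of compact leaves, it does not create compactness from recurrence. Likewise, completeness of $\mF^{cs}$ and $\mF^{cu}$ gives no control on the area or diameter of $su$-leaves, so the claim that it ``prevents infinite area in the limit'' has no content.

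The entire difficulty of the theorem lives in the case your proposal cannot reach: ruling out a minimal lamination $\Lambda^{su}\subset\Gamma(f)$ with no compact leaves when $\pi_1(M)$ is not virtually solvable. The paper does this by collapsing the complementary $I$-bundles, showing the leaves are uniformly Gromov hyperbolic (\propref{Gromov}), and then studying the ideal boundaries of the lifted leaves together with the stable subfoliation $\wt{\Lambda}^s_L$ inside each leaf $L$. If the ideal limit set $S_L$ of the stable rays fails to be dense one lands in the suspension-Anosov case (solvable $\pi_1$, contradiction); if it is dense, completeness forces the stable leaves to be uniform quasi-geodesics sharing a common ideal point, and a deck transformation fixing a leaf must then fix a stable quasi-geodesic near its axis, producing a closed stable manifold --- absurd. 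None of this machinery appears in your outline, so the proposal as written does not prove the theorem.
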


\subsection{Accessibility in a general case}
The following results provides a discription on accessibility classes for a non-wandering partially hyperbolic diffeomorphism.
\begin{thm}\cite{2008nil}\label{lamination}
	Let $f: M\rightarrow M$ be a partially hyperbolic diffeomorphism of a closed orientable 3-manifold such that $NW(f)=M$. Assume that three invariant bundles $E^s$, $E^c$, $E^u$ are all orientable and $f$ is not accessible. Then, exactly one of the following possibilities holds:
	\begin{itemize}
		\item there is an incompressible torus tangent to $E^s\oplus E^u$;
		\item there is an $f$-invariant lamination $\emptyset\neq \Gamma(f)\neq M$ tangent to $E^s\oplus E^u$ that trivially extends to a foliation without compact leaves;
		\item there is an invariant foliation $\Gamma(f)$ tangent to $E^s\oplus E^u$ without compact leaves.
	\end{itemize}
\end{thm}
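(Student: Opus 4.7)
The plan is to analyze the set $\Gamma(f)$ of non-open accessibility classes and show that it naturally carries the structure of a lamination tangent to $E^s\oplus E^u$, then to branch on the topological behavior of its leaves. First I would invoke the local structure theory of accessibility classes developed by Pugh--Shub: in the three-dimensional partially hyperbolic setting with $\dim E^c=1$, any accessibility class that is not open is a $C^1$ injectively immersed surface tangent to $E^s\oplus E^u$, and the union of these non-open classes is closed and $f$-invariant. Combining these facts with the hypothesis that $f$ is not accessible (so $\Gamma(f)\neq\emptyset$) equips $\Gamma(f)$ with a lamination structure whose leaves are tangent to $E^s\oplus E^u$.

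Next I would study compact leaves of $\Gamma(f)$. Any compact leaf $T$ is a closed surface carrying two transverse orientable line subbundles $E^s|_T$ and $E^u|_T$ of its tangent bundle, so the Euler characteristic of $T$ must vanish; orientability of the bundles then forces $T$ to be a 2-torus. I would establish incompressibility of such a torus by a classical argument: a compressing disk would bound a subsurface producing, after saturating by stable/unstable holonomy and using $NW(f)=M$, a contradiction with the expansion/contraction along $E^s$ and $E^u$ (this is the mechanism from \cite{2008nil, 2011TORI}). This disposes of the first alternative: if $\Gamma(f)$ has a compact leaf, we are in case (1).

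Assume from now on that all leaves of the lamination $\Gamma(f)$ are non-compact. If $\Gamma(f)=M$ we are immediately in case (3), so the remaining work is to prove case (2) when $\Gamma(f)\subsetneq M$. The key observation is that each connected component $U$ of $M\setminus\Gamma(f)$ is open and saturated by both stable and unstable leaves, because accessibility classes in $U$ are open. I would then use Dippolito-type semistability/completion techniques for codimension-one laminations together with the product-like $su$-structure available in $U$ to produce a foliation on the closure $\overline{U}$ extending the boundary leaves of $\Gamma(f)$; the absence of compact leaves in the extension follows because any new compact leaf would again be a torus tangent to $E^s\oplus E^u$, contradicting the assumption that $\Gamma(f)$ has no compact leaf (and falling into case (1) instead). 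Gluing these component-wise extensions along $\Gamma(f)$ yields the invariant foliation required in case (2), and the three cases are manifestly mutually exclusive since a compact leaf cannot coexist with a foliation without compact leaves.

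The main obstacle I anticipate is the trivial extension step: verifying that the complementary regions of $\Gamma(f)$ admit a compatible foliation extending the boundary and without introducing compact leaves. This requires a delicate coupling between the topology of the end set of each non-compact lamination leaf and the dynamical product structure given by $\mathcal{F}^s$ and $\mathcal{F}^u$ in $U$; in particular, one must rule out Reeb-type components and exploit $NW(f)=M$ to preclude wandering behavior near the boundary leaves. Everything else—local lamination structure, Euler-characteristic argument, and incompressibility—is comparatively routine once the lamination is in place.
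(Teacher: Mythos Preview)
The paper does not prove this statement at all: it is quoted verbatim from \cite{2008nil} (Hertz--Hertz--Ures) and used as a black box in the proof of \thmref{accessible-complete}. There is therefore no ``paper's own proof'' to compare your proposal against; your sketch is an attempt to reconstruct the argument of the cited reference, not of the present paper.

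That said, your outline is broadly aligned with the strategy in \cite{2008nil}. A couple of remarks on substance: the incompressibility of a compact $su$-leaf does not follow from a compressing-disk argument in the way you describe; in \cite{2008nil,2011TORI} it is obtained from the fact that an $su$-torus must be an \emph{Anosov torus} (some iterate acts as an Anosov map on it), and Anosov tori are incompressible by a separate topological argument. More importantly, for the trivial extension step the actual mechanism is not a generic Dippolito completion but the specific fact that, once there are no compact leaves, the non-wandering hypothesis forces every complementary region of $\Gamma(f)$ to be an $I$-bundle over a boundary leaf; the extension is then just the foliation by the interval fibers. Your identification of this as the delicate step is correct, but the resolution is more concrete than ``semistability techniques'' suggests: one shows the boundary leaves of each complementary region are periodic, analyzes the dynamics transverse to them using the one-dimensional center, and concludes the $I$-bundle structure directly.
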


Note that in the second situation above, the foliation extended by an $f$-invariant lamination is not necessarily invariant under $f$ nor tangent to the joint bundle $E^s\oplus E^u$.

Let us introduce some results that will be used in the subsequent.
\begin{prop}\cite{FP_hyperbolic}\label{lift}
	Let $f: M\rightarrow M$ be a homeomorphism of a compact manifold and $g:\hat{M}\rightarrow \hat{M}$ be a lift of an iterate of $f$ to a finite cover $\hat{M}$. If $NW(f)=M$, then $NW(g)=\hat{M}$. If moreover $f$ is a partially hyperbolic diffeomorphism of a closed 3-manifold, then it is accessible provided that $g$ is accessible.
\end{prop}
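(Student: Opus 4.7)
The plan is to split the proposition into two independent claims. Throughout, write $\pi\colon \hat M \to M$ for the finite covering map, so that $\pi \circ g = f^k \circ \pi$ for some $k\geq 1$, and let $d$ be the degree of $\pi$.

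For the non-wandering assertion, fix $\hat x \in \hat M$ and a small open neighborhood $\hat U$ on which $\pi$ is injective, with $U := \pi(\hat U)$ and $x := \pi(\hat x)$. Since $f$ is a homeomorphism of a compact metric space with $NW(f) = M$, the standard fact $NW(f^k) = NW(f)$ gives $NW(f^k) = M$ as well. Hence there exist $y_m \to x$ in $U$ and $n_m \to \infty$ with $f^{kn_m}(y_m) \to x$. Lifting each $y_m$ to the unique $\hat y_m \in \hat U$, we have $\hat y_m \to \hat x$, and $g^{n_m}(\hat y_m) \in \pi^{-1}(U)$; because $\pi^{-1}(x) = \{\hat z_0 = \hat x, \hat z_1, \dots, \hat z_{d-1}\}$ is finite, a subsequence converges to some $\hat z_{j_1}$. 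If $\hat z_{j_1} = \hat x$ we are done; otherwise, applying the same construction to $\hat z_{j_1}$ produces a $g$-orbit starting arbitrarily near $\hat z_{j_1}$ and returning arbitrarily near some $\hat z_{j_2} \in \pi^{-1}(x)$, and so on. Iterating at most $d+1$ times, the pigeonhole principle forces $\hat z_{j_a} = \hat z_{j_b}$ for some $a < b$; splicing the corresponding orbit segments (using uniform continuity of iterates of $g$ to propagate smallness through the concatenation) yields a $g$-orbit starting arbitrarily close to $\hat x$ that returns arbitrarily close to $\hat x$, so $\hat x \in NW(g)$.

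For the accessibility assertion, observe first that the strong stable and unstable foliations of $f$ and any iterate $f^k$ coincide, so it suffices to deduce accessibility of $f^k$ from accessibility of $g$. Because $\pi$ is a local diffeomorphism and $g$ lifts $f^k$, the strong foliations of $g$ are precisely the $\pi$-pullbacks of those of $f^k$, so $\pi$ sends $su$-paths in $\hat M$ to $su$-paths in $M$. Given $x, y \in M$, choose lifts $\hat x \in \pi^{-1}(x)$ and $\hat y \in \pi^{-1}(y)$; accessibility of $g$ provides an $su$-path in $\hat M$ from $\hat x$ to $\hat y$, and its image under $\pi$ is an $su$-path in $M$ from $x$ to $y$. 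Since $x, y$ were arbitrary, $f^k$, and therefore $f$, is accessible.

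The more delicate step is the non-wandering claim: a priori, the returns in the base only guarantee that $g$-orbits starting near $\hat x$ come back near \emph{some} point of $\pi^{-1}(x)$ rather than near $\hat x$ itself. The finite-group pigeonhole together with the splicing of orbit segments described above is what closes this gap; the only technical point is the uniform continuity needed when concatenating a bounded number of segments of potentially large individual length. The accessibility direction, in contrast, is a direct functorial consequence of lifting the foliations.
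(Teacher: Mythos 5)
The paper does not prove this statement; it imports it from \cite{FP_hyperbolic}, so your attempt can only be judged on its own terms. Your accessibility half is fine: the strong foliations of $f$ and $f^k$ coincide, those of $g$ are their lifts, and projecting an $su$-path joining lifts of $x$ and $y$ is a correct and standard argument. The non-wandering half, however, has genuine gaps. First, the ``standard fact'' $NW(f^k)=NW(f)$ is false for general homeomorphisms ($NW(f^k)$ can be a proper subset of $NW(f)$); the implication you actually need, $NW(f)=M\Rightarrow NW(f^k)=M$, is true but is itself a theorem (regional recurrence is preserved under powers), usually proved by showing that return-time sets $\{n: f^n(U)\cap U\neq\emptyset\}$ contain IP-sets and hence multiples of $k$. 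Second, and more seriously, the splicing step does not produce an orbit. The segment from near $\hat z_{j_1}$ to near $\hat z_{j_2}$ starts at \emph{some} point of a small neighborhood of $\hat z_{j_1}$ supplied by the non-wandering property, not at the terminal point of the previous segment; concatenation therefore yields only a pseudo-orbit. This is exactly the reason the non-wandering relation fails to be transitive (equivalently, why $\Omega(f|_{\Omega(f)})$ can be strictly smaller than $\Omega(f)$), and no appeal to uniform continuity of $g^{n}$ can repair it, both because the moduli of continuity of $g^{n_m}$ degenerate as $n_m\to\infty$ and, more fundamentally, because non-wandering gives you no control over which point of the neighborhood admits the return. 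Third, even granting the splicing, your pigeonhole cycle closes up at some repeated sheet $\hat z_{j_a}$, which need not be $\hat x=\hat z_{j_0}$; at best you would conclude that \emph{some} point of the fiber $\pi^{-1}(x)$ is non-wandering, not that $\hat x$ is.

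The way to close these gaps is to replace ``non-wandering point'' by ``recurrent point'' throughout. When $NW(f)=M$ the set of $f$-recurrent points is residual in $M$; a recurrent point is recurrent for every power $f^k$ because its return-time set contains an IP-set and every IP-set contains positive multiples of $k$; and for a recurrent point the returns all lie on a single orbit, so they compose for free and the splicing issue disappears. Passing to a regular cover (non-wandering descends along factor maps) and to a further iterate so that the lift commutes with the deck group $G$, the pigeonhole argument then shows that $\sigma(\hat x)\in\overline{O^+_g(\hat x)}$ for some $\sigma\in G$, whence $\sigma^{\ell}(\hat x)\in\overline{O^+_g(\hat x)}$ for all $\ell$ and, taking $\ell$ the order of $\sigma$, $\hat x$ itself is $g$-recurrent. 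Since the lifts of a residual set form a residual (hence dense) subset of $\hat M$, this gives $NW(g)=\hat M$. Your pigeonhole-on-sheets idea is the right instinct, but it must be run on a single recurrent orbit, not on unrelated non-wandering returns.
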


\begin{thm}\cite{2011TORI}\label{Anosovtorus}
	Let $f: M\rightarrow M$ be a partially hyperbolic diffeomorphism of a closed 3-manifold. Assume that there is a 2-dimensional embedded torus tangent to $E^s\oplus E^u$. Then the ambient manifold $M$ has solvable fundamental group.
\end{thm}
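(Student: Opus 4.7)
The plan is to combine the dynamics induced on $T$ by $f$ with a topological analysis of how $T$ sits inside $M$.

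First, I would show that an iterate of $f$ preserves $T$ and acts on it as an Anosov diffeomorphism. The collection of embedded closed surfaces tangent to the continuous plane field $E^s \oplus E^u$ is finite up to isotopy, by uniform transversality to $E^c$ and compactness of $M$, so some iterate $f^n$ sends $T$ to itself. Replacing $f$ by this iterate, the line fields $E^s|_T$ and $E^u|_T$ are transverse on $T$ and $f|_T$ uniformly contracts one and expands the other, so $f|_T$ is an Anosov diffeomorphism of a closed surface. By the Franks--Newhouse classification, $T$ must be $\T^2$ and $f|_T$ is topologically conjugate to a linear hyperbolic automorphism.

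Next, I would argue that $T$ is incompressible in $M$. If some essential simple loop $\gamma \subset T$ bounded an embedded disk $D \subset M$, then exploiting the Anosov structure one can choose $\gamma$ within its free homotopy class so that $f^{n}(\gamma) \subset T$ has length tending to infinity, while $f^n(D)$ remains an embedded disk with boundary on $T$. The strong stable and unstable leaves of $f$ emanating from points of $T$ are properly embedded on each side of $T$ and thus eventually escape any embedded disk bounded by a loop in $T$, contradicting the persistence of $D$ under iteration.

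Having shown $T$ is an incompressible $f$-invariant Anosov torus, I would analyze the topology of $M$ via a maximal $f$-invariant finite disjoint family $\mathcal{T}$ of such tori. Cutting $M$ along $\mathcal{T}$, a completeness argument using that $E^c$ is transverse to each torus and that the ambient center-stable and center-unstable foliations $\mF^{cs},\mF^{cu}$ exist and are complete near $\mathcal{T}$ shows that each complementary piece, lifted to the universal cover, is a slab between two parallel planes; hence each piece is homeomorphic to $\T^2 \times [0,1]$ with product foliations. Re-gluing by maps compatible with the Anosov action of $f$ on $\mathcal{T}$ realizes $M$ as a torus bundle over $S^1$, so $\pi_1(M)$ fits into an exact sequence $1 \to \Z^2 \to \pi_1(M) \to \Z \to 1$ and is therefore solvable. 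The main obstacle is this last step: translating the dynamical transversality and Anosov behavior on $T$ into rigid topological constraints on $M \setminus \mathcal{T}$ requires careful control of how strong stable and unstable leaves accumulate on the tori of $\mathcal{T}$, and excluding the more exotic complementary pieces that pure 3-manifold topology would otherwise allow.
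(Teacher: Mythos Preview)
This theorem is not proved in the paper; it is quoted from \cite{2011TORI} and invoked as a black box in the proof of \thmref{accessible-complete}. So there is no in-paper proof to compare against, and any evaluation must be against the original source.

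Your outline has the right global shape --- show that an iterate of $f$ preserves $T$, that the restriction is Anosov (hence $T\cong\T^2$), that $T$ is incompressible, and then squeeze the topology of $M$ --- and this is indeed the architecture of \cite{2011TORI}. But the final step, which you flag yourself as the main obstacle, has a genuine gap: you appeal to ``the ambient center-stable and center-unstable foliations $\mF^{cs},\mF^{cu}$'' and their completeness, yet the statement assumes neither dynamical coherence nor completeness. Those hypotheses appear elsewhere in this paper, but \thmref{Anosovtorus} is stated for an arbitrary partially hyperbolic diffeomorphism of a closed 3-manifold. Without those foliations your slab argument has no input, and there is no evident way to manufacture them from the bare hypotheses.

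The proof in \cite{2011TORI} avoids this entirely. Once one has an incompressible torus on which an ambient diffeomorphism restricts to an Anosov map (what they call an \emph{Anosov torus}), the rest is pure 3-manifold topology: Waldhausen's results on Haken manifolds together with the JSJ decomposition force $M$ to be the 3-torus, the mapping torus of $-\mathrm{id}$ on $\T^2$, or the mapping torus of a hyperbolic automorphism of $\T^2$. No center-stable or center-unstable foliations enter. Your incompressibility sketch is also too informal; the cleaner route is to note that the induced action on $H_1(T;\Z)$ is hyperbolic and then argue with the kernel of $H_1(T)\to H_1(M)$, rather than chasing iterates of a compressing disk.
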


Recall that a codimenion-one foliation is \emph{$\R$-covered} if the leaf space of its lifted foliation in the universal cover is homeomorphic to the real numbers. For a proper lamination, the leaf space of the lifted lamination, however, is not simply connected. Alternatively, we say that a codimension-one lamination is \emph{$\R$-covered} if the leaf space of the lifted lamination is a Hausdorff separable one-dimensional manifold.

\begin{prop}\cite[Corollary 3.13]{FU1}\label{R-covered}
	Let $f: M\rightarrow M$ be a partially hyperbolic diffeomorphism of a closed 3-manifold with complete foliations $\mF^{cs}$ and $\mF^{cu}$ tangent to $E^s\oplus E^c$ and $E^c\oplus E^u$, respectively. If $\Lambda^{su}$ is a minimal lamination tangent to $E^s\oplus E^u$ without compact leaves, then each leaf of the lifted foliation $\wt{\mF}^c$ on the universal cover $\wt{M}$ intersects every leaf of $\wt{\Lambda}^{su}$ in exactly one point. Moreover, the lamination $\Lambda^{su}$ is $\R$-covered.
\end{prop}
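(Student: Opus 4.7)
The plan is to work in the universal cover $\wt{M}$, extract the planarity of the lifted leaves of $\wt{\Lambda}^{su}$, and leverage the bifoliated product structure provided by completeness of $\wt{\mF}^{cs}$ and $\wt{\mF}^{cu}$ to pin down the intersection pattern of $\wt{\mF}^c$ with $\wt{\Lambda}^{su}$. First I would check the topology of each leaf of $\Lambda^{su}$: since such a leaf $L$ is bifoliated by stable and unstable curves and no compact stable or unstable leaves exist by hyperbolicity, classical $2$-dimensional foliation theory rules out compact $L$ and Reeb components, and after passing to a double cover if needed, each $L$ is a plane or a cylinder. Each component $\wt{L}$ of the preimage of $L$ in $\wt{M}$ is then a properly embedded plane (properness follows from $\Lambda^{su}$ being closed in $M$, and the universal cover of a cylinder is the plane). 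Because $\wt{M}$ is simply connected, such a plane $\wt{L}$ separates $\wt{M}$ into two open halfspaces $U^+$ and $U^-$.

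Next, to establish unique intersection, suppose a center leaf $\wt{c}$ of $\wt{\mF}^c$ meets $\wt{L}$ at two distinct points $p$ and $q$. Inside $\wt{\mF}^{cs}(p)$, completeness gives $\wt{\mF}^{cs}(p) = \wt{W}^s(\wt{W}^c(p)) = \wt{W}^c(\wt{W}^s(p))$, so this cs-leaf is bifoliated by transverse center and stable foliations, and the intersection $\wt{L} \cap \wt{\mF}^{cs}(p)$ is $s$-saturated since $\wt{L}$ contains the full stable leaf through each of its points. A direct bifoliation argument then forces $\wt{c}$ to meet each stable leaf inside $\wt{\mF}^{cs}(p)$ in at most one point, so $\wt{W}^s(p) \ne \wt{W}^s(q)$; a symmetric argument in $\wt{\mF}^{cu}(p)$ gives $\wt{W}^u(p) \ne \wt{W}^u(q)$. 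Now the center arc $[p,q]_{\wt{c}}$ composed with an $su$-path in $\wt{L}$ from $q$ back to $p$ forms a loop bounding a disk in the simply connected $\wt{M}$, but the halfspace sides occupied by $[p,q]_{\wt{c}}$ combined with the transversality of $\wt{c}$ and $\wt{L}$ at both endpoints contradicts this. For existence of an intersection, fixing $\wt{c}$ and $\wt{L}$, I would suppose $\wt{c} \subset U^+$; using minimality of $\Lambda^{su}$ together with the $f$-invariance of the lamination and the contraction/expansion along stable and unstable leaves, the projection of $\wt{c}$ to $M$ must accumulate on $\Lambda^{su}$, and completeness of $\wt{\mF}^{cs}$ and $\wt{\mF}^{cu}$ allows this accumulation to be pulled back to an actual crossing in $\wt{M}$.

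Once the unique transverse intersection is in place, any fixed center leaf $\wt{c}$ parametrizes the leaf space of $\wt{\Lambda}^{su}$ via $\wt{L} \mapsto \wt{L} \cap \wt{c}$; since $\wt{c}$ is a connected $1$-manifold (homeomorphic to $\R$ or $S^1$), this identifies the leaf space of $\wt{\Lambda}^{su}$ with a Hausdorff, separable, $1$-dimensional manifold, which is precisely the $\R$-covered condition. The main obstacle in this plan is the surjectivity step: a center leaf $\wt{c}$ might a priori escape to infinity inside a single halfspace $U^\pm$ of $\wt{L}$, and ruling this out requires combining minimality of $\Lambda^{su}$ (which provides density of $su$-leaves downstairs), the $f$-invariance of the lamination, and the full strength of completeness of both $\wt{\mF}^{cs}$ and $\wt{\mF}^{cu}$ to prevent such an escape. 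The other steps (planarity of lifted leaves, uniqueness of the crossing, and $\R$-coveredness) follow relatively formally once the surjective crossing is secured.
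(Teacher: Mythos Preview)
The paper does not give its own proof of this proposition: it is quoted verbatim as \cite[Corollary 3.13]{FU1} and used as a black box in the proof of \thmref{accessible-complete}. So there is no in-paper argument to compare against, and your proposal should be judged on its own merits.

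Your outline has the right architecture (planar lifted $su$-leaves, separation, unique transverse crossing, then parametrize the leaf space by a fixed center leaf), but two of the three substantive steps are not actually carried out. For uniqueness, after you obtain $\wt{W}^s(p)\ne\wt{W}^s(q)$ and $\wt{W}^u(p)\ne\wt{W}^u(q)$, the sentence ``the halfspace sides occupied by $[p,q]_{\wt{c}}$ combined with the transversality \dots\ contradicts this'' is not an argument: a transverse arc that enters $U^+$ at $p$ and exits at $q$ is perfectly consistent with bounding a disk together with an $su$-path in $\wt{L}$, so you have not produced a contradiction. The standard route here is to argue \emph{inside} a single lifted $cs$-leaf (a plane) that the two one-dimensional transverse subfoliations by center and stable leaves force any center leaf to meet $\wt{L}\cap\wt{\mF}^{cs}(p)$ at most once, and symmetrically in $cu$; but you also need to control what $\wt{L}\cap\wt{\mF}^{cs}(p)$ looks like (it is $s$-saturated, but a priori could consist of several stable leaves), and your text does not address this.

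For existence, you correctly flag surjectivity as the crux and then give only a heuristic (``accumulation \dots\ pulled back to an actual crossing''). Minimality of $\Lambda^{su}$ downstairs and completeness of $\wt{\mF}^{cs}$, $\wt{\mF}^{cu}$ are indeed the right inputs, but turning accumulation of $\pi(\wt{c})$ on $\Lambda^{su}$ into an honest intersection $\wt{c}\cap\wt{L}\ne\emptyset$ in the cover requires a genuine argument (e.g.\ showing the set of $su$-leaves met by $\wt{c}$ is both open and closed in the leaf space, or a trapping-region argument using the product structure near $\wt{L}$). As written, this step is a gap rather than a sketch. The final $\R$-covered conclusion is fine once unique crossing is established.
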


The following proposition only concerns the manifold and a lamination, regardless of the dynamics. We recall that any closed 3-manifold admitting a partially hyperbolic system is irreducible \cite{2011TORI}.
\begin{prop}\cite[Proposition 5.7]{FP_hyperbolic}\label{Gromov}
	Let $M$ be an irreducible closed 3-manifold whose fundamental group $\pi_1(M)$ is not virtually solvable. Assume that $\Lambda\subset M$ is a minimal lamination without compact leaves such that each closed complementary region is an $I$-bundle. Then the leaves of $\Lambda$ are uniformly Gromov hyperbolic.
\end{prop}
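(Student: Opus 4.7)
The plan is to extend the minimal lamination to a taut foliation, apply Candel's uniformization theorem to obtain a leafwise hyperbolic metric, and then transfer the geometry back to the ambient Riemannian metric by a compactness argument.

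First, I would fill in each complementary $I$-bundle $U\cong F\times I$ by the obvious product foliation $\{F\times\{t\}\}_{t\in I}$, matching the boundary leaves of $\Lambda$. This produces a topological foliation $\mathcal{F}$ of $M$ extending $\Lambda$. Since $\Lambda$ has no compact leaves, the surface $F$ is non-compact, so none of the inserted leaves are compact either; hence $\mathcal{F}$ has no compact leaves. A foliation of a closed 3-manifold without compact leaves contains no Reeb component (the boundary torus of a Reeb component would be a compact leaf), so $\mathcal{F}$ is taut, and by Novikov--Rosenberg it is $\pi_1$-injective: every leaf lifts to a properly embedded plane in the universal cover $\widetilde{M}$.

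Second, I would rule out conformally parabolic leaves, i.e.\ show that no leaf of $\mathcal{F}$ is a plane or a cylinder. Every leaf of $\mathcal{F}$ is homeomorphic to some leaf of $\Lambda$, since the inserted leaves are parallel copies of boundary leaves; and minimality of $\Lambda$ together with holonomy propagation pins down a common topological type across $\Lambda$. If this common type were planar or cylindrical, the classification of closed 3-manifolds admitting foliations by planes or cylinders (Rosenberg, Palmeira, Roussarie), combined with $\pi_1$-injectivity, would force $\pi_1(M)$ to be virtually solvable, contradicting the hypothesis.

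Third, I would apply Candel's uniformization theorem to $\mathcal{F}$: since no transverse invariant measure is supported on leaves of non-negative Euler characteristic, there is a continuous leafwise Riemannian metric $h$ of constant curvature $-1$, conformally related to the ambient metric $g|_{\mathcal{F}}$ by a continuous conformal factor on the compact manifold $M$. That factor is therefore uniformly bounded, so $h$ and $g|_{\mathcal{F}}$ are uniformly bi-Lipschitz on leaves. Each leaf of $\Lambda$ is then isometric to $\mathbb{H}^2$ under $h$ and bi-Lipschitz to its ambient restriction; since Gromov hyperbolicity is a bi-Lipschitz invariant with uniform constants, the leaves of $\Lambda$ are uniformly Gromov hyperbolic in the induced ambient metric.

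The main obstacle will be the second step: ruling out planar or cylindrical leaves uniformly. Translating the Rosenberg--Palmeira--Roussarie classification from the foliation setting to the lamination-plus-$I$-bundle setting, while genuinely using the hypothesis that $\pi_1(M)$ is not virtually solvable (and that $M$ is irreducible), is the delicate technical core of the argument; everything else is a packaging of Candel's theorem and compactness.
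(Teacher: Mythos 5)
The paper does not prove this proposition; it is quoted verbatim from \cite{FP_hyperbolic}, and the proof there is indeed organized around Candel's uniformization theorem plus a compactness argument, so your first and third steps follow the standard route: extend $\Lambda$ across the $I$-bundle complementary regions, verify the hypothesis of Candel's theorem, obtain a continuous leafwise hyperbolic metric, and convert the continuous positive conformal factor on the compact lamination into uniform bi-Lipschitz control. Those parts are fine (a small quibble: ``no Reeb components hence taut'' is a non sequitur in general, but ``no compact leaves hence taut'' is correct, since every non-compact leaf of a foliation of a compact manifold meets a closed transversal).

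The genuine gap is in your second step, and it is not merely ``delicate'': the goal you set there is both wrong and, in the intended applications, false. Candel's criterion does not ask that no leaf be a plane or a cylinder; it asks that every \emph{transverse invariant measure} of the lamination have negative Euler characteristic (via the Ruelle--Sullivan pairing). You have conflated topological type with conformal type: a plane leaf can perfectly well be conformally hyperbolic (the disk) rather than parabolic ($\mathbb{C}$), and likewise for cylinders. Indeed, in the very situation where this proposition is invoked in the present paper, $\Lambda$ is a lamination tangent to $E^s\oplus E^u$, so every leaf is foliated by stable and unstable curves and is topologically a plane or a cylinder --- yet the conclusion is that these leaves are Gromov hyperbolic. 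So any argument that tries to exclude planar or cylindrical leaves must fail. Moreover, the auxiliary claim that minimality ``pins down a common topological type across $\Lambda$'' is false: minimal laminations routinely contain both plane and cylinder leaves (e.g.\ the weak stable foliation of an Anosov flow). The correct content of the middle step is: suppose $\mu$ is a transverse invariant measure; by minimality its support is all of $\Lambda$; one then shows that $\chi(\mu)\ge 0$ forces either a compact (torus or sphere) leaf or a foliation almost without holonomy whose structure (via Plante/Imanishi--Tischler-type arguments, or growth of leaves) makes $\pi_1(M)$ virtually solvable, contradicting the hypotheses. That is where irreducibility and the non-virtual-solvability of $\pi_1(M)$ actually enter, and it is the step your proposal does not supply.
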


Now, let us state our proof. 

\begin{proof}[Proof of \thmref{accessible-complete}]
	Up to taking a finite lift and iterate, we can assume that the manifold $M$ and invariant bundles $E^s$, $E^c$, and $E^u$ are all orientable and $f$ preserves these orientations. We do not lose any generality as shown in \propref{lift}. 
	
	Suppose that $f$ is a non-accessible partially hyperbolic diffeomorphism with $NW(f)=M$. If there exists a compact accessibility class, then it is an incompressible torus tangent to $E^s\oplus E^u$ since it is foliated by one-dimensional unstable manifolds. It implies by \thmref{Anosovtorus} that the fundamental group $\pi_1(M)$ has to be solvable, which contradicts to our assumption. Then, by \thmref{lamination}, the set of non-open accessibility classes $\Gamma(f)$ is either a foliation or a lamination that trivially extends to a foliation without compact leaves. We denote by $\Lambda^{su}$ a minimal sublamination of $\Gamma(f)$, whose complementary regions are $I$-bundles as shown in \thmref{lamination}.
	
	As $f$ admits two invariant foliations $\mF^{cs}$ and $\mF^{cu}$, there exists an invariant center foliation, denoted by $\mF^c$. Denote by $\wt{\mF}^{\sigma}, \sigma=cs, cu, c,$ the lifted foliations and $\wt{\Lambda}^{su}$ the lifted lamination in the universal cover $\wt{M}$.
	
	For simplicity, we can reduce the lamination $\Lambda^{su}$ to a minimal foliation by collapsing the complementary regions along center $I$-bundles. The collapsing map does not change the foliation $\mF^c$ and the lifted one $\wt{\mF}^c$ by viewing them as sets of leaves, see \propref{R-covered}. Moreover, it will not change the differentiability of the manifold and leaves of $\Lambda^{su}$ as $M$ is a 3-manifold. Then $\Lambda^{su}$ is a minimal foliation of a closed 3-manifold $M_0=M/\sim$, where two points in $M$ are equivalent if there are contained in the same $I$-fiber in a complementary region of $\Lambda^{su}$. As shown in \propref{Gromov}, the leaves of $\Lambda^{su}$ in $M$ are all uniformly Gromov hyperbolic. Then $\Lambda^{su}$ is a minimal $\R$-covered foliation in $M_0$ by non-compact uniformly Gromov hyperbolic leaves. 
	
	In the universal cover $\wt{M_0}$, each lifted leaf of $\wt{\Lambda}^{su}$ can be identified to the Poincar\'{e} disk with an ideal boundary circle at infinity. Denote by $\partial_{\infty}L$ the ideal boundary of a leaf $L\in\wt{\Lambda}^{su}$, and $\mA\colon=\bigcup_{L\in\wt{\Lambda}^{su}}\partial_{\infty}L$ the union of all ideal circles. By considering a natural topology of $\mA$, the union $\wt{M_0}\cup \mA$ is homeomorphic to a solid cylinder, see \cite[Section 4]{FU1} for details. With respect to the given topology, we can discuss about ideal points in each $\partial_{\infty}L$. Note that every deck transformation fixing some leaf of $\wt{\Lambda}^{su}$ is a hyperbolic type M\"{o}bius tranformation with exactly two fixed points in the ideal boundary of the leaf.
	
	In each leaf $L\in \wt{\Lambda}^{su}$, we can consider the subfoliation of stable leaves, denoted by $\wt{\Lambda}^s_L$, induced by the intersection with foliation $\wt{\mF}^{cs}$. As shown in \cite[Proposition 4.3]{FU1}, for any leaf $L\in\wt{\Lambda}^{su}$, each ray of any leaf of $\wt{\Lambda}^s_L$ accumulates in a single ideal point in the ideal boundary $\partial_{\infty}L$. Denoted by $S_L\subset\partial_{\infty}L$ the set of ideal limit points of $\wt{\Lambda}^s_L$.
	
	We first assume that the set $S_L$ is not dense in $\partial_{\infty}L$ with respect to the given topology for some $L\in\wt{\Lambda}^{su}$. Then, applying the same argument as in \cite[Proposition 4.16]{FU1}, the foliation $\Lambda^{su}$ is not uniform, which means that there is at least one pair of leaves in $\wt{\Lambda}^{su}$ with unbounded Hausdorff distance. By \cite[Proposition 4.11]{FU1}, $\Lambda^{su}$ coincides with the stable foliation of a flow that is conjugate to a suspension Anosov flow. Moreover, the manifold $M_0$ is a torus bundle over the circle and thus has solvable fundamental group.
	
	So, for any leaf $L\in \wt{\Lambda}^{su}$, the set $S_L$ is dense in the ideal boundary $\partial_{\infty}L$. Following the argument in \cite[Section 5]{FU1}, the completeness of $\mF^{cs}$ and $\mF^{cu}$ implies that every leaf of $\wt{\Lambda}^s_L$ has two distinct ideal points in $\partial_{\infty}L$ for any $L\in \wt{\Lambda}^{su}$. Equivalently, it turns out that all leaves of $\wt{\Lambda}^s_L$ are uniform quasi-geodesics and share a common ideal point in $\partial_{\infty}L$. Since each deck transformation preserves the stable foliation $\wt{\Lambda}^s$, the fundamental group $\pi_1(L_0)$ has at most one generator for each $L\in\wt{\Lambda}^{su}$, where $L_0=\pi(L)$ is the projection leaf of $L$ under the universal covering map $\pi: \wt{M_0}\rightarrow M_0$. By \thmref{rosenberg}, there exists at least one leaf $L\in \wt{\Lambda}^{su}$ fixing by a non-trivial deck transformation, denoted by $h(L)=L$. Let $\xi_L\in S_L$ be the common ideal point of $\wt{\Lambda}^s_L$. As $h$ is a hyperbolic M\"{o}bius transformation preserving the foliation $\wt{\Lambda}^s_L$, it fixes the point $\xi_L$ and another distinct point in $\partial_{\infty}L$, denoted by $\eta_L$. Let $\gamma_h$ be the axis of $h$, which is the unique geodesic connecting the ideal point $\xi_L$ and $\eta_L$. Since $S_L$ is dense in $\partial_{\infty}L$, by the continuity of ideal points of $\wt{\Lambda}^s_L$ (see \cite[Lemma 5.1]{FU1}), there is a leaf $l\in \wt{\Lambda}^s_L$ connecting two ideal points $\xi_L$ and $\eta_L$. Recall that $l$ is a quasi-geodesic in $L$, which means that $l$ is entirely contained in $K$-neighborhood of $\gamma_h$ for some constant $K>0$. By the fact that $\xi_L$ and $\eta_L$ are two fixed points of $h$, the orbit $\{h^i(l)\}_{i\in\Z}$ is also contained in the $K$-neighborhood of $\gamma_h$. It turns out the existence of an $h$-invariant leaf of $\wt{\Lambda}^s_L$ in the closure of the $K$-neighborhood of $\gamma_h$. Therefore, since $h$ is a deck transformation in the universal cover, there is a closed stable manifold in the projection leaf $\pi(L)\in\Lambda^{su}$, which is absurd.
	
	Hence, we finish the proof.
\end{proof}

\subsection{Proof of \thmref{accessible}}
Now we are going to present our proof of \thmref{accessible}.

\begin{proof}[Proof of \thmref{accessible}]
	Let $f: M^3\rightarrow M^3$ be a partially hyperbolic diffeomorphism with quasi-isometric center and $M$ be a closed 3-manifold whose fundamental group is not virtually solvable. By \propref{integrable} and \propref{complete}, there are $f$-invariant complete foliations $\mF^{cs}$ and $\mF^{cu}$.
	Assume that the non-wandering set of $f$ is the whole manifold, i.e., $NW(f)=M$. Then, $f$ is accessible by \thmref{accessible-complete}.
	
	If $f$ is a $C^r$ conservative partially hyperbolic diffeomorphism for $r>1$, then it is a K-system and, in particular, it is ergodic by \cite{08invent, BW10annals}. Note that accessibility is an $C^1$ open property for partially hyperbolic diffeomorphisms with one-dimensional center \cite{Didier}. There is a $C^1$-neighborhood $\mU$ of $f$ such that every diffeomorphism $g\in \mU$ is an accessible partially hyperbolic diffeomorphism. As shown in \cite{Avila10}, a volume-preserving diffeomorphism can be $C^1$-approximated by smooth volume-preserving diffeomorphisms. In particular, the set of $C^2$ volume-preserving partially hyperbolic diffeomorphisms, denoted by $\mU'$, has non-empty intersection with $\mU$. Again using \cite{08invent, BW10annals}, every diffeomorphism $g\in \mU'\cap \mU$ is a K-system and thus ergodic. Therefore, we conclude that $f$ is a stably K-system and it is stably ergodic, completing the proof.
\end{proof}

\subsection{Proof of \thmref{ergodic}}
Now, let us present our proof of \thmref{ergodic}.

\begin{proof}[Proof of \thmref{ergodic}]
	Let $f: M^3\rightarrow M^3$ be a $C^2$ partially hyperbolic diffeomorphism of a closed 3-manifold such that $f$ has quasi-isometric center and preserves the Lebesgue measure $m$. Assume that $f$ is ergodic for $m$. For any open set $U\subset M$, the union $\overline{\bigcup_{n\in \Z} f^n(U)}$ is an $f$-invariant set of positive measure. Then, its complement is a closed null measure set. Since $\overline{\bigcup_{n\in \Z} f^n(U)}$ is also a closed set, we deduce that it coincides with the manifold $M$. It implies that $f$ is transitive.
	
	Suppose that $f$ is transitive. If the fundamental group $\pi_1(M)$ is not virtually solvable, then $f$ is accessible and ergodic by \thmref{accessible}. If $\pi_1(M)$ is virtually solvable, then by \cite{HS21DA}, either $f$ is accessible or there exists an $su$-torus. We are sufficient to prove ergodicity in the later case. The set of all $su$-tori is compact and $f$-invariant by \cite{Hae62}. It implies by transitivity that $M$ is foliated by $su$-tori.
	By \propref{periodiccompact}, there is an $f$-invariant compact center leaf, denoted by $\gamma$. Since $f$ is $m$-preserving and $\|Df^n|_{E^c}\|$ is uniformly bounded, $f$ is 2-normally hyperbolic at $\gamma$ and thus $\gamma$ is a $C^2$ curve by \cite[Theorem 4.1]{HPS77}. By transitivity, the map $f|_{\gamma}$ has irrational rotation number. Applying \cite[Theorem 1.4]{Herman79}, we obtain that $f|_{\gamma}$ is ergodic. Then, for any measurable $f$-invariant $su$-saturated set $N\subset M$, the intersection $N\cap \gamma$ has either full or null $m|_{\gamma}$-measure. It follows that $N$ has either full or null $m$-measure. As shown in \cite{08invent, BW10annals}, we conclude that $f$ is ergodic and thus finish the proof.
\end{proof}

\section{Classification under non-wandering property}\label{section-classify-NW}

In this section, we establish the classification result for partially hyperbolic diffeomorphisms with quasi-isometric center and the non-wandering property, as stated in \thmref{classify_NW_strong}. Before doing so, we will present a coarser classification that allows for an additional possibility within the same context as \thmref{classify_NW_strong}. Furthermore, we will explore the relationships between these two classification results under the non-wandering condition, the classification under the transitivity assumption (\thmref{classify_transitive}), and the accessibility result (\thmref{accessible}) for partially hyperbolic diffeomorphisms with quasi-isometric center.

\subsection{A coarser classification in the non-wandering context}
We shall recall the notion of collapsed Anosov flow introduced in \cite{BFP23collapsed}, which is a wide class of partially hyperbolic diffeomorphisms in dimension three and includes all known examples of partially hyperbolic diffeomorphisms of manifolds with non-virtually solvable fundamental group.

We say that a partially hyperbolic diffeomorphism $f: M\rightarrow M$ of a clsoed 3-manifold $M$ is a \emph{collapsed Anosov flow} if there is a topological Anosov flow $\phi_t$, a self-orbit equivalence $\beta: M\rightarrow M$ of $\phi_t$ and a continuous map $h: M\rightarrow M$ homotopic to the identity such that
\begin{itemize}
	\item the map $h$ is $C^1$ along the orbits of $\phi_t$ and maps the vector field generating $\phi_t$ to a set vectors in the bundle $E^c$, that is, $\partial_th(\phi_t(x))|_{t=0}\in E^c(h(x))$ for any $x\in M$;
	\item we have that $h\circ \beta(x)=f\circ h(x)$ for any $x\in M$.
\end{itemize}

We note that a discretized Anosov flow is a particular case of collapsed Anosov flow with trivial self-orbit equivalence. One can discribe some topological center behaviors of $f$ by sending the flow lines of $\phi_t$ to center curves tangent to $E^c$ through $h$. However, a priori, the center curves given by $h$ might not be the leaves of a center foliation, or more generally, these curves might not belong to the intersection of branching foliations tangent to $E^s\oplus E^c$ and $E^c\oplus E^u$.

While the collapsed Anosov flow gathers some topological information of the center for a partially hyperbolic diffeomorphism, a more restrictive notion, called quasi-geodesic collapsed Anosov flow, provides some geometrical properties.

Given a lifted foliation $\wt{\mF}$ in the universal cover, we say that a curve $l$ in a leaf $L\in \wt{\mF}$ is a \emph{quasi-geodesic} if there are constants $\lambda>1$, $\eta>0$ and a parametrization $\chi: \R\rightarrow L$ such that $\lambda^{-1}|t-s|-\eta\leq d_l(\chi(t), \chi(s))\leq \lambda|t-s|+\eta$, where $d_l(\chi(t),\chi(s))$ is the length of subcurve joining $\chi(t)$ and $\chi(s)$ with respect to the induced metric on the leaf $L$. A family of curves is \emph{uniformly quasi-geodesic} if the constants $\lambda$ and $\eta$ can be chosen independent of curves and leaves of $\wt{\mF}$.

We say that a partially hyperbolic diffeomorphism $f: M\rightarrow M$ is a \emph{quasi-geodesic collapsed Anosov flow} if it is a collapsed Anosov flow preserving two center stable and center unstable foliations with Gromov hyperbolic leaves and moreover the center leaves are uniform quasi-geodesics inside center stable and center unstable leaves.

For the sake of simplicity, we assume the existence of invariant center stable and center unstable foliations in the definition above. Note that this definition is strictly more restrictive than the one introduced in \cite{BFP23collapsed}, where the authors consider invariant branching foliations in the sense of \cite{BI08}.

The following result provides a weaker version of classification compared with \thmref{classify_NW_strong}, which is independent of \thmref{classify_transitive}. As we mentioned before, this result is also weaker than the one independently obtained in \cite{EMP}. As the idea of proof here is analogous to \cite{EMP}, we will only give a sketch without much detail.

\begin{thm}\label{classify_NW}
	Let $f: M^3\rightarrow M^3$ be a $C^1$ partially hyperbolic diffeomorphism of a closed 3-manifold with quasi-isometric center and $NW(f)=M$. Then, up to a finite lift and iterate, one of the following occurs:
	\begin{itemize}
		\item $f$ is conjugate to a skew product over an Anosov automorphism on $\T^2$;
		\item $f$ is a discretized Anosov flow;
		\item $f$ is a quasi-geodesic collapsed Anosov flow.
	\end{itemize}
\end{thm}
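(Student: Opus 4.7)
The plan is to split into two regimes based on the topology of $M$.

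In the first regime, suppose $\pi_1(M)$ is virtually solvable. Then $M$ is finitely covered by $\T^3$, a non-trivial nilmanifold, or a solvmanifold, and by the Hammerlindl--Potrie classification \cite{Hammerlindl13,HP14,HP15} $f$ is, up to a finite lift and iterate, leaf conjugate to a linear model. The quasi-isometric center hypothesis together with \propref{integrable} rules out absolute partially hyperbolic (Anosov-like) models, leaving two possibilities: a skew product over a linear Anosov on $\T^2$ (in the $\T^3$ and nilmanifold subcases), or a time-one-style map of a suspension Anosov on the solvmanifold, which is a discretized Anosov flow.

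In the second regime, assume $\pi_1(M)$ is not virtually solvable. By \thmref{accessible}, $f$ is accessible, and \propref{complete} gives complete invariant foliations $\mF^{cs}$ and $\mF^{cu}$, whose leaves are cylinders or planes by \corref{cylinder/plane}. Following the template of the proof of \thmref{accessible-complete}, an analogue of \propref{Gromov} (after collapsing $I$-bundle complementary regions of any $su$-minimal lamination, if one is present) equips the leaves of $\mF^{cs}$ and $\mF^{cu}$ with uniformly Gromov hyperbolic geometries. I would then use the quasi-isometric center hypothesis to show that each center leaf sits inside its $cs$- and $cu$-leaf as a uniform quasi-geodesic: a long center curve cannot wander far from a leafwise geodesic because forward and backward iterates would produce center arcs of bounded length, conflicting with the exponential behavior transverse in the $s$- or $u$-direction.

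With these geometric ingredients in place, I would import the construction of \cite{BFP23collapsed,FP23_transverse}: the uniform quasi-geodesic center curves have well-defined pairs of ideal endpoints in the ideal boundaries of their $cs$- and $cu$-leaves, producing a coherent orbit-space structure on $\wt M$ matching that of a topological Anosov flow $\phi_t$. The $f$-action on the space of center leaves descends to a self-orbit equivalence $\beta$, and a leafwise collapse along center segments yields a continuous $h\colon M\to M$ homotopic to the identity with $h\circ\beta=f\circ h$. This exhibits $f$ as a quasi-geodesic collapsed Anosov flow; in the special case where $\beta$ can be taken to be the identity, $f$ is a discretized Anosov flow. The main obstacle lies in promoting the quasi-isometric center hypothesis, which is a dynamical length condition, to the geometric statement that center leaves are uniform quasi-geodesics inside $cs$- and $cu$-leaves. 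Here the completeness of $\mF^{cs}$ and $\mF^{cu}$ together with the $\R$-covered structure furnished by \propref{R-covered} is decisive, allowing one to compare the induced metric on a $cs$- or $cu$-leaf with its hyperbolic model uniformly across all leaves. Once this step is done, the construction of the triple $(\phi_t,\beta,h)$ is largely formal and parallels \cite{EMP}.
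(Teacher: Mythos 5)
Your overall architecture --- splitting on whether $\pi_1(M)$ is virtually solvable, handling the solvable regime with the Hammerlindl--Potrie classification, and aiming for a quasi-geodesic collapsed Anosov flow in the non-solvable regime --- is the same as the paper's. Two points of divergence are worth noting before the main issue. First, the detour through $su$-laminations and \propref{R-covered} is unnecessary: since $NW(f)=M$, the foliations $\mF^{cs}$ and $\mF^{cu}$ are themselves minimal, so \propref{Gromov} applies to them directly (there are no complementary regions to collapse) and their leaves are uniformly Gromov hyperbolic. Second, invoking \thmref{accessible} buys you nothing here --- accessibility is never used in the rest of your argument, and the paper deliberately keeps this classification independent of accessibility (it in fact uses the classification to give a second proof of \thmref{accessible}).

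The genuine gap sits exactly where you flag ``the main obstacle'': promoting the quasi-isometric center condition to the statement that center leaves are \emph{uniform quasi-geodesics} inside $cs$- and $cu$-leaves. Your heuristic --- that a center curve straying far from a leafwise geodesic would conflict with exponential transverse behavior under iteration --- does not work as stated: quasi-isometry of the center controls lengths of iterated center arcs, but says nothing a priori about how a center leaf sits metrically inside its Gromov hyperbolic $cs$-leaf, and \propref{R-covered} (which concerns $su$-laminations) is not the relevant input. What the paper actually does at this step is purely leafwise-topological: using the completeness from \propref{complete} and a Poincar\'{e}--Bendixson argument on the properly embedded planes of $\wt{\mF}^{cs}$, it shows that every center leaf meets every stable leaf of the same $cs$-leaf in exactly one point, so the center leaf space inside each leaf of $\wt{\mF}^{cs}$ (and of $\wt{\mF}^{cu}$) is homeomorphic to $\R$; the uniform quasi-geodesity is then supplied wholesale by \cite[Theorem 1.1]{FP23intersection}. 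Without that (or an equivalent) input, your construction of ideal endpoints, of the expansive flow $\phi_t$ tangent to the center foliation, and hence of the collapsed Anosov flow structure, does not get off the ground.
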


\begin{proof}
	Notice that the partially hyperbolic diffeomorphism $f$ with quasi-isometric center is dynamically coherent. We devide the proof into two cases: either the fundamental group of the manifold $\pi_1(M)$ is virtually solvable or not. If $\pi_1(M)$ is virtually solvable, then the diffeomorphism $f$ admits an iterate over a finite lift which either is conjugate to a skew product over an Anosov automorphism on $\T^2$ or is a discretized suspension flow over an Anosov map \cite{HP15}. So, we are sufficient to discuss the situation that $\pi_1(M)$ is not virtually solvable.
	
	Note that, by the condition $NW(f)=M$, both invariant foliations $\mF^{cs}$ and $\mF^{cu}$ are minimal \cite[Proposition 6.7]{2020Seifert}, which means that every leaf is dense in $M$. Since the 3-manifold $M$ is irreducible as shown in \cite{2011TORI}, the leaves of $\mF^{cs}$ and $\mF^{cu}$ are all Gromov hyperbolic by \propref{Gromov}. By the completeness given by \propref{complete}, the lifted foliations $\wt{\mF}^{cs}$ and $\wt{\mF}^{cu}$ in the universal cover $\wt{M}$ are also complete. It implies that, in any leaf of $\wt{\mF}^{cs}$, every center leaf intersects each stable leaf in a unique point. To see this, we first notice that any center leaf must intersect every stable leaf inside the same leaf of $\wt{\mF}^{cs}$ by the completeness. Suppose that the intersection of a pair of center and stable leaves contains at least two points. Note that the leaves of $\wt{\mF}^{cs}$ are properly embedded planes in $\R^3$. Then, the Poincar\'{e}-Bendixson Theorem implies that either there is a compact stable leaf, or there is a point with degenerated stable tangent space. Both of these two possibilities are absurd by the partial hyperbolicity. 
	
	Thus, in each leaf of $\wt{\mF}^{cs}$, the leaf space of the center foliation is homeomorphic to a stable leaf, and thus it is homeomorphic to the real numbers. The same holds for $\wt{\mF}^{cu}$. Applying \cite[Theorem 1.1]{FP23intersection}, the leaves of $\wt{\mF}^c$ are uniformly quasi-geodesic. Recall in \corref{cylinder/plane} that each leaf of $\mF^{cs}$ and $\mF^{cu}$ is either a cylinder or a plane. By considering a $\pi_1(M)$-equivariant vector field tangent to the leaves of $\wt{\mF}^c$, one can produce a non-singular flow $\phi_t$ preserving both foliations $\mF^{cs}$ and $\mF^{cu}$ projected by the covering map $\pi: \wt{M}\rightarrow M$. Moreover, the flow $\phi_t$ is expansive by the fact that the leaves of $\wt{\mF}^c$ are uniform quasi-geodesics with a single common ideal point inside each leaf of $\wt{\mF}^{cs}$ and $\wt{\mF}^{cu}$, see \cite[Proposition 7.1]{BFP23collapsed}. Thus, the flow $\phi_t$ is a topological Anosov flow, see for instance \cite{IM90, Paternain93} or \cite[Theorem 5.9]{BFP23collapsed}. It follows directly from the construction of $\phi_t$ that $f$ is a quasi-geodesic collapsed Anosov flow. Hence, the proof is finished.
\end{proof}

\subsection{An alternative proof of accessibility}

We present an alternative proof of \thmref{accessible} by making use of our classification, which is different from the proof we provided in \sref{section-accessible}.

By \propref{lift}, it suffices to show the accessibility for any finite lift and iterate of a partially hyperbolic diffeomorphism with quasi-isometric center. As shown in \thmref{classify_NW}, up to a finite lift and iterate, we can assume that either $f$ is conjugate to a skew product over an Anosov automorphism on $\T^2$, it is a discretized Anosov flow, or it is a quasi-geodesic collapsed Anosov flow. 

In the first case, the manifold $M$ is a circle bundle over a torus and thus its fundamental group is solvable. If $f$ is a discretized Anosov flow, then it is accessible by \cite[Theorem C]{FP_hyperbolic} since $\pi_1(M)$ is not virtually solvable. The accessibility in the case that $f$ is a quasi-geodesic collapsed Anosov flow can be obtained directly using the following result.

\begin{thm}\cite{FP21accessible}
	Let $f: M\rightarrow M$ be a collapsed Anosov flow of a closed 3-manifold with $NW(f)=M$. Assume that $\pi_1(M)$ is not virtually solvable. Then, $f$ is accessible.
\end{thm}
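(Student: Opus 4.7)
The plan is to assume $f$ is not accessible and derive a contradiction by exploiting the topological Anosov flow $\phi_t$ hidden in the collapsed Anosov flow structure, together with the topology of $M$. First I would apply \propref{lift} to reduce, without loss of generality, to the case where $M$ and the invariant bundles $E^s, E^c, E^u$ are all orientable and orientation-preserved by $f$, since this reduction respects both accessibility and the non-wandering condition. Under the contrary assumption, \thmref{lamination} gives three possibilities for the set $\Gamma(f)$ of non-open accessibility classes: an incompressible $su$-torus, a proper $f$-invariant $su$-lamination trivially extending to a foliation, or an $f$-invariant $su$-foliation, with no compact leaves in the latter two cases. The $su$-torus case is immediately ruled out by \thmref{Anosovtorus}, which would force $\pi_1(M)$ to be virtually solvable, contrary to hypothesis.

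Thus a nontrivial $f$-invariant $su$-lamination $\Gamma(f)$ exists, and I would pick a minimal sublamination $\Lambda^{su}$ whose closed complementary regions are $I$-bundles. Since the ambient $M$ is irreducible (every closed 3-manifold admitting a partially hyperbolic diffeomorphism is) and $\pi_1(M)$ is not virtually solvable, \propref{Gromov} ensures that the leaves of $\Lambda^{su}$ are uniformly Gromov hyperbolic, so each lifted leaf of $\wt{\Lambda}^{su}$ carries a well-defined ideal boundary circle $\partial_\infty L$ as in \cite[Section 4]{FU1}.

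The crux is to use the collapsed Anosov flow structure to interact with $\Lambda^{su}$. The topological Anosov flow $\phi_t$ has two genuine transverse codimension-one weak foliations $\mF^{ws}$ and $\mF^{wu}$ which are complete, and the semiconjugacy $h \circ \beta = f \circ h$ with $h \simeq \mathrm{id}_M$ transports dynamical information from $\phi_t$ to $f$. In the universal cover $\wt{M}$, I would analyze how $\wt{\Lambda}^{su}$ meets the lifted weak foliations $\wt{\mF}^{ws}$ and $\wt{\mF}^{wu}$, using $h$ to align stable and unstable directions of $f$ with those of $\phi_t$ up to the bounded distance from the identity. Inside each leaf $L \in \wt{\Lambda}^{su}$ this produces an induced one-dimensional stable subfoliation whose ideal endpoints in $\partial_\infty L$ can be studied exactly as in the proof of \thmref{accessible-complete}. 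The dichotomy then runs: either the set of ideal endpoints fails to be dense in $\partial_\infty L$, in which case the non-uniformity of $\Lambda^{su}$ forces (via \cite[Proposition 4.11]{FU1}) the induced flow to be conjugate to a suspension Anosov flow and $M$ to be a torus bundle with solvable fundamental group, a contradiction; or it is dense, and the completeness of the weak foliations of $\phi_t$ combined with the deck action forces two distinct leaves of the induced stable subfoliation to share a common pair of ideal points, hence a closed stable leaf inside $\Lambda^{su}$, contradicting the absence of compact leaves.

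The main obstacle will be transferring the ideal-boundary argument of \thmref{accessible-complete}, which is phrased for a diffeomorphism possessing its own complete invariant center-stable and center-unstable foliations, to the looser setting of a collapsed Anosov flow where such honest foliations for $f$ are not available a priori and only the weak foliations of the underlying $\phi_t$ enjoy completeness. Bridging this gap requires carefully exploiting $h \simeq \mathrm{id}$ and $h \circ \beta = f \circ h$ to push the $su$-lamination of $f$ onto a configuration controllable by $\mF^{ws}, \mF^{wu}$ in $\wt{M}$, and showing that the endpoint structure on leaves of $\wt{\Lambda}^{su}$ remains rigid under this comparison. Once this compatibility is established, the contradiction from the dichotomy above closes the proof.
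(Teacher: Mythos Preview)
The paper does not prove this theorem; it is quoted from \cite{FP21accessible} and used as a black box in the alternative proof of \thmref{accessible}. There is therefore no proof in the paper to compare your proposal against.

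On the substance of your proposal: you correctly identify the obstacle, but you do not overcome it. The argument of \thmref{accessible-complete} depends essentially on having complete $f$-invariant foliations $\mF^{cs}$ and $\mF^{cu}$ so that inside each leaf of $\wt{\Lambda}^{su}$ one can produce an induced stable subfoliation $\wt{\Lambda}^s_L$ whose ideal-endpoint structure is rigid. A general collapsed Anosov flow need not be dynamically coherent, and the semiconjugacy $h$ goes the wrong way for your purpose: it maps flow lines of $\phi_t$ to center curves of $f$, not the other way around, and it is not assumed to be injective or a homeomorphism. Pushing the $su$-lamination of $f$ (which lives on the $f$ side) back through $h$ to interact with $\mF^{ws}, \mF^{wu}$ on the $\phi_t$ side is not justified by the collapsed Anosov flow axioms alone. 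Your final paragraph effectively says ``once this compatibility is established'' without establishing it; that is precisely the content of the result in \cite{FP21accessible}, where substantial additional machinery (branching foliations, leaf-space analysis specific to the collapsed Anosov flow setting) is used. As written, your proposal is a plausible outline but not a proof.
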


Therefore, we complete the proof of \thmref{accessible}.

\subsection{Proof of \thmref{classify_NW_strong}}

Let $f: M\rightarrow M$ be a $C^1$ partially hyperbolic diffeomorphism of a closed 3-manifold with quasi-isometric center. We have shown in \propref{integrable} that the center bundle $E^c$ is uniquely integrable. In particular, $f$ is dynamically coherent. 

If the fundamental group $\pi_1(M)$ is virtually solvable, then by \cite{HP15} the dynamical coherence implies that, up to a finite lift and iterate, $f$ is either a derived map from Anosov, conjugate to a skew product over an Anosov automorphism on $\T^2$, or a discretized Anosov flow. The first case cannot occur since it is homotopic to a uniformly hyperbolic map and thus cannot have quasi-isometric center. In the last case, we assume that $\hat{f^k}: \hat{M}\rightarrow \hat{M}$ is a lift of some iterate $f^k: M\rightarrow M$ by a finite covering $\pi: \hat{M}\rightarrow M$ such that $\hat{f^k}$ is a discretized Anosov flow. Then $\hat{f^k}$ fixes every leaf of the lifted foliation $\hat{\mF^c}$. For any $\hat{f^k}$-invariant compact center leaf $\hat{\gamma}\in\hat{\mF^c}$, its projection $\pi(\hat{\gamma})=:\gamma$ is a compact $f$-periodic center leaf of $\mF^c$. Moreover, every center leaf in $W^s(\gamma)$ and $W^u(\gamma)$ is $f$-periodic since it is a projection of an $\hat{f^k}$-invariant center leaf. Thus, $f$ satisfies the assumption of second item of \thmref{BW} and then it admits an iterate that is a discretized Anosov flow.

When $\pi_1(M)$ is not virtually solvable, the diffeomorphism $f$ is accessible given by \thmref{accessible}. In light of \cite{Brin75_transitive}, the non-wandering property is equivalent to the transitivity for accessible partially hyperbolic diffeomorphism. Then, we can apply the result of \thmref{classify_transitive} to get the complete classification. Hence, we finish the proof.

\subsection{Proof of \thmref{flow}}

Let $M$ be a closed 3-manifold whose fundamental group $\pi_1(M)$ has exponential growth, and $f: M\rightarrow M$ be a partially hyperbolic diffeomorphism with quasi-isometric center and $NW(f)=M$. By \thmref{classify_NW_strong}, if any iterate of $f$ is not a discretized Anosov flow, then there is a lift of an iterate of $f$ conjugate to a skew product over an Anosov automorphism on the torus. It turns out that $M$ is finitely covered by a circle bundle over the torus, and thus its fundamental group $\pi_1(M)$ does not have exponential growth.

Assume that $f^k: M\rightarrow M$ is a discretized Anosov flow for some integer $k\in \N$. We deduce from \thmref{BW} that the center foliation of $f^k$ coincides with a transitive topological Anosov flow, denoted by $\phi_t: M\rightarrow M$. By \cite{Shannon_flow}, there exists a smooth Anosov flow $\psi_t: M\rightarrow M$ orbit equivalent to $\phi_t$. In particular, $\psi_t$ is also transitive. Hence, we complete the proof.

\section*{Acknowledgements}
I am grateful to Santiago Martinchich for updating me on the progress of their independent work and for his suggestion to present our work simultaneously. I also extend my sincere thanks to Ra\'{u}l Ures and Rafael Potrie for their reading and valuable comments on the manuscript.

\bibliographystyle{aomalpha}
\bibliography{QI_ref}

\end{document}